\newcommand{\EO}[1]{\noindent{\textcolor{blue}{#1}}}
\newtheorem{remark}{Remark}[section]
\newcommand{\mbRn}{{\mathbb{R}^n}}
\newcommand{\T}{\mathscr{T}}
\newcommand{\V}{\mathbb{V}}
\newcommand{\omg}{{\Omega}}
\newcommand{\omgc}{{\Omega^c}}
\title{
Fractional semilinear optimal control: optimality conditions, convergence, and error analysis\thanks{Submitted to the editors \today. }}
\author{Enrique Ot\'arola\thanks{Departamento de Matem\'atica, Universidad T\'ecnica Federico Santa Mar\'ia, Valpara\'iso, Chile ({\tt enrique.otarola@usm.cl}).}
}
\begin{document}
\maketitle

\begin{abstract}
We adopt the integral definition of the fractional Laplace operator and analyze an optimal control problem for a fractional semilinear elliptic partial differential equation (PDE); control constraints are also considered. We establish the well-posedness of fractional semilinear elliptic PDEs and analyze regularity properties and suitable finite element discretizations. Within the setting of our optimal control problem, we derive the existence of optimal solutions as well as first and second order optimality conditions; regularity estimates for the optimal variables are also analyzed. We devise a fully discrete scheme that approximates the control variable with piecewise constant functions; the state and adjoint equations are discretized with continuous piecewise linear finite elements. We analyze convergence properties of discretizations and derive a priori error estimates.
\end{abstract}

\begin{keywords}
optimal control problem, fractional diffusion, integral fractional Laplacian, regularity estimates, finite elements, convergence, a priori error estimates.
\end{keywords}

\begin{AMS}
35R11,    
49J20,     
49M25,    
65K10,    
65N15,    
65N30.    
\end{AMS}

\section{Introduction}
In this work we are interested in the analysis and discretization of a distributed optimal control problem for a \emph{fractional}, \emph{semilinear}, and \emph{elliptic} partial differential equation (PDE). To make matters precise, we let $\Omega \subset \mathbb{R}^n$ be an open and bounded domain in $\mathbb{R}^n$ $(n \in \{2,3\})$ with Lipschitz boundary $\partial \Omega$; additional regularity requirements on $\partial \Omega$ will be imposed in the course of our regularity and convergence rate analyses ahead. Let us introduce the cost functional
\begin{equation}
 \label{eq:cost_functional}
 J(u,z) := \int_{\Omega} L (x,u(x)) \mathrm{d}x + \frac{\alpha}{2} \int_{\Omega} |z(x)|^2  \mathrm{d}x,
\end{equation}
where $L: \Omega \times \mathbb{R} \rightarrow \mathbb{R}$ denotes a Carath\'eodory function of class $C^2$ with respect to the second variable and $\alpha > 0$ corresponds to the so-called regularization parameter. Further assumptions on $L$ will be deferred until section \ref{sec:assumption}. In this work, we shall be concerned with the following PDE-constrained optimization problem: Find
$
 \min  J(u,z)
$
subject to the \emph{fractional, semilinear}, and \emph{elliptic} PDE
\begin{equation}
\label{eq:state_equation}
(-\Delta)^s u + a(\cdot,u) = z \textrm{ in } \Omega, \qquad u = 0 \textrm{ in } \Omega^c,
\end{equation}
and the \emph{control constraints}
$
 \mathfrak{a} \leq z(x) \leq \mathfrak{b}
$
for a.e.~$x \in \Omega$. Here, $\omgc=\mbRn\setminus \omg$. The control bounds $\mathfrak{a},\mathfrak{b} \in \mathbb{R}$ are such that $\mathfrak{a} < \mathfrak{b}$. Assumptions on the nonlinear function $a$ will be deferred until section \ref{sec:assumption}. We will refer to the previously defined PDE-constrained optimization problem as the \emph{fractional semilinear optimal control problem}.

For smooth functions $w: \mathbb{R}^n \rightarrow \mathbb{R}$, there are several equivalent definitions of the fractional Laplace operator $(-\Delta)^s$ in $\mathbb{R}^n$ \cite{MR3613319}. Indeed, $(-\Delta)^s$ can be naturally defined by means of the following pointwise formula:
\begin{equation}
 (-\Delta)^s w(x) = C(n,s) \, \mathrm{ p.v. } \int_{\mathbb{R}^n} \frac{w(x) - w(y)}{|x-y|^{n+2s}} \mathrm{d}y,
 \qquad
 C(n,s) = \frac{2^{2s} s \Gamma(s+\frac{n}{2})}{\pi^{n/2}\Gamma(1-s)},
 \label{eq:integral_definition}
\end{equation}
where $\textrm{p.v.}$ stands for the Cauchy principal value and $C(n,s)$ is a positive normalization constant that depends only on $n$ and $s$. Equivalently, $(-\Delta)^s$ can be defined via Fourier transform: $\mathcal{F}( (-\Delta)^s w) (\xi) = | \xi |^{2s} \mathcal{F}(w) (\xi)$. A proof of the equivalence of these two definitions can be found in \cite[section 1.1]{Landkof}. In addition to these two definitions, several other \emph{equivalent definitions} of $(-\Delta)^s$ in $\mathbb{R}^n$ are available in the literature \cite{MR3613319}. Regarding \emph{equivalence}, the scenario in bounded domains is \emph{substantially different}. For functions supported in $\bar \Omega$, we may utilize the integral representation \eqref{eq:integral_definition} to define $(-\Delta)^s$. This gives rise to the so-called \emph{restricted} or \emph{integral} fractional Laplacian. Notice that we have materialized a zero Dirichlet condition by restricting the operator to acting only on functions that are zero outside $\Omega$. We must immediately mention that in bounded domains, and in addition to the \emph{restricted} or \emph{integral} fractional Laplacian there are, at least, two other \emph{nonequivalent} definitions of nonlocal operators related to the fractional Laplacian: the \emph{regional} fractional Laplacian and the \emph{spectral} fractional Laplacian; see \cite[Section 2]{MR3393253} and \cite[Section 6]{MR3503820} for details. In this work, we adopt the \emph{restricted} or \emph{integral} definition of the fractional Laplace operator $(-\Delta)^s$, which, from now on, we shall simply refer to as the \emph{integral fractional Laplacian}. 

During the very recent past, there has been considerable progress in the design and analysis of solution techniques for linear problems involving fractional diffusion. We refer the interested reader to \cite{MR3893441,acta_marta} for a complete overview of the available results and limitations. In contrast to these advances, the numerical analysis of PDE-constrained optimization problems involving $(-\Delta)^s$ has been less explored. Restricting ourselves to problems that consider the spectral definition, we mention \cite{MR3429730,MR4015150,MR3702421} within the linear--quadratic scenario, \cite{MR3850351} for optimization with respect to order, \cite{MR4066856,MR3739306} for sparse PDE-constrained optimization, and \cite{MR3939497} for bilinear optimal control. We also mention \cite{semilinear}, where the authors analyze, at the continuous level, a semilinear optimal control problem for the spectral and integral fractional Laplacian. Concerning the integral fractional Laplacian, it seems that the results are even scarcer; the linear--quadratic case has been recently analyzed in \cite{MR3990191,glusaotarola}. We conclude this paragraph by mentioning \cite{MR3596859,MR3158780} for discretizations of optimal control problems involving suitable nonlocal operators and \cite{MR4202972} for a related fractional optimal control problem.

In addition to this exposition being the first one that studies numerical schemes for semilinear optimal control problems involving the \emph{integral} fractional Laplacian, the analysis itself comes with its own set of difficulties. Overcoming them has required us to provide several results. Let us briefly detail some of them:
\begin{enumerate}
\label{i}
\item[(i)] \emph{Fractional PDEs:} Let $s \in (0,1)$, $n \geq 2$, $r>n/2s$, and  $z \in L^r(\Omega)$. We show that \eqref{eq:state_equation} is well-posed for $a = a(x,u)$ being a Carath\'eodory function, monotone increasing in $u$, satisfying \eqref{eq:assumption_on_phi_state_equation} and $a(\cdot,0) \in L^r(\Omega)$ (Theorem \ref{thm:stata_equation_well_posedness}).

\label{ii}
\item[(i)] \emph{FEM discretizations:} We prove convergence of finite element discretizations on Lipschitz polytopes and obtain error estimates on smooth domains; the latter under additional assumptions on $a$ and the underlying forcing term that guarantee the regularity estimates of Theorem \ref{thm:regularity_space_state_equation}; see section \ref{sec:fem}.

\label{iii}  
\item[(iii)] \emph{Existence of an optimal control:} Assuming that, in addition, $L=L(x,u)$ is a Carath\'eodory function and $a$ and $L$ satisfy \eqref{eq:aux_for_existence}, we show that our control problem admits at least a solution; see Theorem \ref{thm:existence_control}.

\label{iv}
\item[(iv)] \emph{Optimality conditions:} Let $n \in \{2,3\}$ and $s > n/4$. Under additional assumptions on $a$ and $L$, we derive second order necessary and sufficient optimality conditions with a minimal gap; see Section \ref{sec:2nd_order}.

\label{v} 
\item[(v)] \emph{Regularity estimates:} Let $n \geq 2$ and $s \in (0,1)$. We obtain regularity properties for optimal variables: $\bar u,\bar p, \bar{z} \in H^{s+1/2-\epsilon}(\Omega)$,
where $\epsilon >0$ is arbitrarily small; see Theorem \ref{thm:regularity_space}.

\label{vi}
\item[(vi)] \emph{Convergence of discretization and error estimates:} Let $n \geq 2$ and $s \in (0,1)$. 
We prove that global solutions of discrete optimal control problems converge to a global solution of the continuous one and that strict local continuous solutions can be approximated by local discrete ones; see Theorems \ref{thm:convergence} and \ref{thm:convergence_local_minima}.
When $n \in \{2,3\}$ and $s>n/4$, we derive error estimates; see Theorem \ref{thm:error_estimate_control}. To obtain these results we have assumed that solutions to finite element discretizations of \eqref{eq:state_equation} are uniformly bounded in $L^{\infty}(\Omega)$.
\end{enumerate}

Over the last 20 years, several contributions have delineated the numerical analysis of semilinear optimal control problems. Without a doubt, these studies have paved the way for the achievement of the aforementioned results. In particular, we have followed \cite{MR2583281}, for the analysis of \eqref{eq:state_equation} and the optimal control problem, \cite{MR3586845}, for deriving second order optimality conditions, and \cite{MR2350349,MR3023751,MR3586845}, for analyzing convergence properties and deriving error estimates.

The rest of the paper is organized as follows. In section \ref{sec:state_equation}, we analyze the fractional state equation \eqref{eq:state_equation}. A complete study of the fractional semilinear optimal control problem is presented in section \ref{sec:optimal_control_problem}. In sections \ref{sec:fem} and \ref{sec:fem_adjoint}, we study finite element discretizations for \eqref{eq:state_equation} and the so-called adjoint equation, respectively. Section \ref{sec:fem_control} is dedicated to the analysis of finite element discretizations for the fractional semilinear optimal control problem: convergence and error estimates.

\section{Notation and preliminaries}\label{sec:notation}
Let us begin by presenting the main notation and assumptions we shall operate under. For $n \geq 2$, we let $\Omega \subset \mathbb{R}^n$ be an open and bounded domain with Lipschitz boundary $\partial \Omega$; we will impose additional assumptions on $n$ and $\partial \Omega$ when needed. We will denote by $\Omega^c$ the complement of $\Omega$.  If $\mathcal{X}$ and $\mathcal{Y}$ are normed spaces, we write $\mathcal{X}  \hookrightarrow \mathcal{Y}$ to denote that $\mathcal{X}$ is continuously embedded in $\mathcal{Y}$. Let $\{ x_n \}_{n=1}^{\infty}$ be a sequence in $\mathcal{X}$. We will denote by $x_n \rightarrow x$ and $x_n \rightharpoonup x$ the  strong and weak convergence, respectively, of $\{ x_n \}_{n=1}^{\infty}$ to $x$. The relation ${\sf a} \lesssim {\sf b}$ indicates that ${\sf a} \leq C {\sf b}$, with a positive constant $C$ that does not depend on either ${\sf a}$, ${\sf b}$, or the discretization parameters, but it might depend on $s$, $n$, and $\Omega$. The value of $C$ might change at each occurrence.

\subsection{Assumptions}\label{sec:assumption}

We will operate under the following assumptions on $a$ and $L$. We must, however, immediately mention that some of the results obtained in this work are valid under less restrictive requirements; when possible we explicitly mention the assumptions on $a$ and $L$ that are needed to obtain a particular result.

\begin{enumerate}[label=(A.\arabic*)]
\item \label{A1} $a:\Omega\times \mathbb{R}\rightarrow  \mathbb{R}$ is a Carath\'eodory function of class $C^2$ with respect to the second variable and $a(\cdot,0)\in L^{r}(\Omega)$ for $r>n/2s$.

\item \label{A2} $\frac{\partial a}{\partial u}(x,u)\geq 0$ for a.e.~$x\in\Omega$ and for all $u \in \mathbb{R}$.

\item \label{A3} For all $\mathfrak{m}>0$, there exists a positive constant $C_{\mathfrak{m}}$ such that 
\begin{equation*}
\sum_{i=1}^{2}\left|\frac{\partial^{i} a}{\partial u^{i} }(x,u)\right|\leq C_{\mathfrak{m}},
\qquad
\left|\frac{\partial^{2} a}{\partial u^{2} }(x,v)- \frac{\partial^{2} a}{\partial u^{2} }(x,w)\right|\leq C_{\mathfrak{m}} |v-w|
\end{equation*}
for a.e.~$x\in \Omega$ and $u,v,w \in [-\mathfrak{m},\mathfrak{m}]$.
\end{enumerate}

\begin{enumerate}[label=(B.\arabic*)]
\item \label{B1} $L: \Omega \times \mathbb{R} \rightarrow \mathbb{R}$ is a Carath\'eodory function of class $C^2$ with respect to the second variable and $L(\cdot,0) \in L^1(\Omega)$.

\item \label{B2} For all $\mathfrak{m}>0$, there exist $\psi_{\mathfrak{m}}, \phi_{\mathfrak{m}} \in L^{r}(\Omega)$, with $r>n/2s$, such that
\begin{equation*}
\left|
\frac{\partial L}{\partial u}(x,u)
\right|
\leq \psi_{\mathfrak{m}}(x),
\qquad
\left|
\frac{\partial^2 L}{\partial u^2}(x,u)
\right|
\leq \phi_{\mathfrak{m}}(x),
\end{equation*}
for a.e.~$x\in \Omega$ and $u \in [-\mathfrak{m},\mathfrak{m}]$.
\end{enumerate}

The following assumptions are particularly needed to derive regularity estimates:

\begin{enumerate}[label=(C.\arabic*)]
\item \label{C1} $a(\cdot,0) \in L^2(\Omega) \cap H^{\frac{1}{2} - s - \epsilon}(\Omega)$ and $\frac{\partial a}{\partial u}(\cdot,0) \in H^{\beta}(\Omega)$ for every $\beta < \tfrac{1}{2}$.

\item \label{C2} For every $\mathfrak{m}>0$ and $u \in [-\mathfrak{m},\mathfrak{m}]$, $\frac{\partial L}{\partial u}(\cdot,u) \in L^2(\Omega) \cap H^{\frac{1}{2} - s - \epsilon}(\Omega)$.
\end{enumerate}
In \ref{C1} and \ref{C2}, $\epsilon>0$ denotes an arbitrarily small positive constant.

\subsection{Function spaces}
For any $s \geq 0$, we define $H^s(\mathbb{R}^n)$, the Sobolev space of order $s$ over $\mathbb{R}^n$, by \cite[Definition 15.7]{Tartar}
\[
 H^s(\mathbb{R}^n) := \left \{ v \in L^2(\mathbb{R}^n): (1+|\xi|^2)^{s/2} \mathcal{F}(v) \in L^2(\mathbb{R}^n)\right \}.
\]
With the space $H^s(\mathbb{R}^n)$ at hand, we define $\tilde H^s(\Omega)$ as the closure of $C_0^{\infty}(\Omega)$ in $H^s(\mathbb{R}^n)$. This space can be equivalently characterized by \cite[Theorem 3.29]{McLean}
\begin{equation}
\tilde H^s(\Omega) = \{v|_{\Omega}: v \in H^s(\mathbb{R}^n), \textrm{ supp } v \subset \overline\Omega\}.
\end{equation}
When $\partial \Omega$ is Lipschitz $\tilde H^s(\Omega)$ is equivalent to $\mathbb{H}^s(\Omega)=[L^2(\Omega),H_0^1(\Omega)]_s$, the real interpolation between $L^2(\Omega)$ and $H_0^1(\Omega)$ for $s \in (0,1)$ and to $H^s(\Omega) \cap H_0^1(\Omega)$ for $s \in (1,3/2)$ \cite[Theorem 3.33]{McLean}. We denote by $H^{-s}(\Omega)$ the dual space of $\tilde H^s(\Omega)$ and by $\langle \cdot, \cdot \rangle$ the duality pair between these two spaces. We define the bilinear form
\begin{equation}
\label{eq:bilinear_form}
 \mathcal{A}(v,w) = \frac{C(n,s)}{2} \iint_{\mathbb{R}^n \times \mathbb{R}^n} \frac{( v(x) - v(y) ) (w(x)-w(y))}{|x-y|^{n+2s}} \mathrm{d}x \mathrm{d}y,
\end{equation}
and denote by $\| \cdot \|_s$ the norm that $ \mathcal{A}(\cdot,\cdot)$ induces, which is just a multiple of the $H^s(\mathbb{R}^n)$-seminorm:
$
 \| v \|_s =  \sqrt{ \mathcal{A}(v,v)} = \mathfrak{C}(n,s) |v|_{H^s(\mathbb{R}^n)},
$
where $\mathfrak{C}(n,s) = \sqrt{C(n,s)/2}$.

We will repeatedly use the following continuous embedding: $H^s(\Omega) \hookrightarrow L^{\mathfrak{q}}(\Omega)$ for $1 \leq \mathfrak{q} \leq 2n/(n-2s)$ \cite[Theorem 7.34]{MR2424078}; observe that $n>2s$. If $\mathfrak{q} < 2n/(n-2s)$ the embedding $H^s(\Omega)  \hookrightarrow L^{\mathfrak{q}}(\Omega)$ is compact \cite[Theorem 6.3]{MR2424078}.

\section{The state equation}
\label{sec:state_equation}
Let $f \in H^{-s}(\Omega)$ be a forcing term. In this section, we analyze the following \emph{fractional}, \emph{semilinear}, and \emph{elliptic} PDE:
\begin{equation}
 \label{eq:weak_semilinear_pde}
  \mathcal{A}(u,v)  +  \langle a(\cdot,u),v \rangle = \langle f , v \rangle 
  \quad \forall v \in \tilde H^{s}(\Omega).
\end{equation}
Here, $a = a(x,u) : \Omega \times \mathbb{R} \rightarrow \mathbb{R}$ denotes a Carath\'eodory function that is monotone increasing in $u$. In addition, we assume that, for every $\mathfrak{m}>0$, there exits
\begin{equation}
\varphi_{\mathfrak{m}} \in L^{\mathfrak{t}}(\Omega): 
\quad
|a(x,u)| \leq | \varphi_{\mathfrak{m}}(x)|
~\textrm{a.e.}~x \in \Omega,~u \in [-\mathfrak{m},\mathfrak{m}],
\quad 
\mathfrak{t} =2n/(n+2s).
\label{eq:assumption_on_phi_state_equation}
\end{equation}

We present the following existence and uniqueness result.

\begin{theorem}[well-posedness of fractional and semilinear PDEs]
Let $n \geq 2$,  $s \in (0,1)$, and $r>n/2s$. Let $\Omega \subset \mathbb{R}^n$ be an open and bounded domain with Lipschitz boundary. If $f \in L^{r}(\Omega)$, $a$ satisfies \eqref{eq:assumption_on_phi_state_equation}, and $a(\cdot,0) \in L^r(\Omega)$, 
then problem \eqref{eq:weak_semilinear_pde} admits a unique solution $u \in \tilde H^s(\Omega) \cap L^{\infty}(\Omega)$. In addition, we have the estimate
\begin{equation}
 | u |_{H^s(\mathbb{R}^n)} + \| u \|_{L^{\infty}(\Omega)} \lesssim \| f - a(\cdot,0) \|_{L^{r}(\Omega)},
 \label{eq:stability}
\end{equation}
with a hidden constant that is independent of $u$, $a$, and $f$.
\label{thm:stata_equation_well_posedness}
\end{theorem}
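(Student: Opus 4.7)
The plan is to combine monotone-operator methods for existence and uniqueness in $\tilde H^s(\Omega)$ with a Stampacchia-type truncation argument for the $L^{\infty}$ estimate, and finally harvest the seminorm control by testing the equation against $u$ itself.

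First, I would introduce the nonlinear operator $T:\tilde H^s(\Omega)\to H^{-s}(\Omega)$ defined by $\langle T(u),v\rangle := \mathcal{A}(u,v) + \langle a(\cdot,u),v\rangle$. Assumption \eqref{eq:assumption_on_phi_state_equation}, together with the continuous embedding $L^{\mathfrak{t}}(\Omega) \hookrightarrow H^{-s}(\Omega)$ dual to $\tilde H^s(\Omega) \hookrightarrow L^{2n/(n-2s)}(\Omega)$, makes $T$ well defined and bounded on bounded subsets of $\tilde H^s(\Omega)$. I would then verify the three hypotheses of the Browder--Minty theorem. Monotonicity is immediate: for $u,v\in\tilde H^s(\Omega)$, both $\mathcal{A}(u-v,u-v)\geq 0$ and $\langle a(\cdot,u)-a(\cdot,v),u-v\rangle\geq 0$ (using the nondecreasing hypothesis on $a$). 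Hemicontinuity of $t\mapsto \langle T(u+tv),w\rangle$ follows from continuity of $a(x,\cdot)$ combined with dominated convergence, the envelope $\varphi_{\mathfrak{m}}$ playing the role of integrable dominator. Coercivity is obtained by splitting $\langle T(u),u\rangle = \mathcal{A}(u,u) + \langle a(\cdot,u)-a(\cdot,0),u\rangle + \langle a(\cdot,0),u\rangle$, discarding the middle term by monotonicity, and controlling $|\langle a(\cdot,0),u\rangle|\leq \|a(\cdot,0)\|_{L^r(\Omega)}\|u\|_{L^{r'}(\Omega)}\lesssim \|u\|_{\tilde H^s(\Omega)}$, where $r>n/(2s)$ forces $r'<n/(n-2s)$ so the Sobolev embedding applies. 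Browder--Minty then yields existence of $u\in\tilde H^s(\Omega)$ solving $T(u)=f$; uniqueness follows from the standard argument of testing the difference of two solutions against itself.

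Next, for the $L^{\infty}$ bound, I would run Stampacchia truncation. For $k>0$ the truncation $(u-k)_+$ belongs to $\tilde H^s(\Omega)$ and is an admissible test function. The pointwise inequality $(u(x)-u(y))\bigl((u-k)_+(x)-(u-k)_+(y)\bigr) \geq \bigl((u-k)_+(x)-(u-k)_+(y)\bigr)^2$ implies $\mathcal{A}(u,(u-k)_+) \geq \mathcal{A}((u-k)_+,(u-k)_+)$, while monotonicity of $a$ gives $\langle a(\cdot,u)-a(\cdot,0),(u-k)_+\rangle\geq 0$. Inserting both estimates into \eqref{eq:weak_semilinear_pde} yields
\begin{equation*}
\|(u-k)_+\|_s^2 \leq \int_{A_k}\bigl(f(x)-a(x,0)\bigr)(u(x)-k)\,\mathrm{d}x, \qquad A_k:=\{x\in\Omega:u(x)>k\}.
\end{equation*}
Hölder combined with $\tilde H^s(\Omega)\hookrightarrow L^{2n/(n-2s)}(\Omega)$ then produces a super-linear inequality of the form $(h-k)\,|A_h|^{\theta}\lesssim \|f-a(\cdot,0)\|_{L^r(\Omega)}|A_k|^{\mu}$ for all $h>k>0$, with $\mu>\theta$ precisely because $r>n/(2s)$. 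The classical Stampacchia iteration lemma then furnishes $k_0\lesssim \|f-a(\cdot,0)\|_{L^r(\Omega)}$ with $|A_{k_0}|=0$, i.e., $u\leq k_0$ a.e.\ in $\Omega$. Applying the same machinery to $-u$, which solves an analogous equation with $\tilde a(x,u):=-a(x,-u)$ (still monotone nondecreasing and dominated as in \eqref{eq:assumption_on_phi_state_equation}), furnishes the matching lower bound and delivers $\|u\|_{L^{\infty}(\Omega)}\lesssim \|f-a(\cdot,0)\|_{L^r(\Omega)}$.

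Finally, to secure the seminorm bound $|u|_{H^s(\mathbb{R}^n)}\lesssim \|f-a(\cdot,0)\|_{L^r(\Omega)}$ in \eqref{eq:stability}, I would test \eqref{eq:weak_semilinear_pde} with $v=u$, write $\langle a(\cdot,u),u\rangle = \langle a(\cdot,u)-a(\cdot,0),u\rangle + \langle a(\cdot,0),u\rangle$, drop the first (nonnegative) term by monotonicity, and apply Hölder and Sobolev exactly as in the coercivity step. I expect the main technical obstacle to be the Stampacchia step: the truncation inequality for the nonlocal form $\mathcal{A}$ and the verification that $(u-k)_+$ inherits zero extension outside $\Omega$ (so it genuinely sits in $\tilde H^s(\Omega)$) require more care than in the local PDE case, whereas the monotone-operator arguments for existence, uniqueness, and the seminorm bound are quite standard once $a(\cdot,u)$ has been lodged in the dual of $\tilde H^s(\Omega)$ via \eqref{eq:assumption_on_phi_state_equation}.
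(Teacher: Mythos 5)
Your overall architecture (Browder--Minty for existence/uniqueness, a Stampacchia truncation for the $L^\infty$ bound, testing with $u$ for the seminorm estimate) is the same as the paper's, and your one-sided truncation $(u-k)_+$ applied to $u$ and $-u$ is a legitimate variant of the paper's two-sided truncation $v_k$. However, there is a genuine gap at the very first step: you assert that \eqref{eq:assumption_on_phi_state_equation} makes the operator $T(u)=\mathcal{A}(u,\cdot)+a(\cdot,u)$ well defined from $\tilde H^s(\Omega)$ into $H^{-s}(\Omega)$. It does not. Assumption \eqref{eq:assumption_on_phi_state_equation} is \emph{local} in the second variable: it provides a dominating function $\varphi_{\mathfrak{m}}\in L^{\mathfrak{t}}(\Omega)$ only for $u$ ranging in the compact interval $[-\mathfrak{m},\mathfrak{m}]$. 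A generic $u\in\tilde H^s(\Omega)$ is not bounded, so $a(\cdot,u(\cdot))$ need not belong to $L^{\mathfrak{t}}(\Omega)$ at all (take, e.g., $a(x,u)=e^{e^{u}}-e$, which satisfies every hypothesis of the theorem). The same local-versus-global issue undermines your hemicontinuity argument, where you invoke ``the envelope $\varphi_{\mathfrak{m}}$'' as a dominator along the segment $u+tv$, and your boundedness claim for $T$ on bounded subsets of $\tilde H^s(\Omega)$.

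The paper circumvents this with a two-stage truncation that your proposal is missing. First it proves the theorem under the \emph{additional} hypothesis that $a$ is globally dominated, i.e.\ $|a(x,u)|\leq|\varphi(x)|$ for all $u\in\mathbb{R}$ with a single $\varphi\in L^{\mathfrak{t}}(\Omega)$; under that hypothesis your monotone-operator and Stampacchia arguments go through verbatim and yield the a priori bound $|u|_{H^s(\mathbb{R}^n)}+\|u\|_{L^\infty(\Omega)}\leq c\,\|f-a(\cdot,0)\|_{L^r(\Omega)}$. Then it replaces $a$ by the cut-off $a_k(x,u):=a(x,\min\{\max\{u,-k\},k\})$, which \emph{is} globally dominated by $\varphi_k$ thanks to \eqref{eq:assumption_on_phi_state_equation} with $\mathfrak{m}=k$, solves the truncated problem, and observes that the constant $c$ in the $L^\infty$ bound is independent of $k$; choosing $k>c\,\|f-a(\cdot,0)\|_{L^r(\Omega)}$ forces $a_k(x,u(x))=a(x,u(x))$ a.e., so the solution of the truncated problem solves the original one. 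Without this step your existence argument does not get off the ground, because the functional-analytic framework (the operator $T$ and the duality pairing $\langle a(\cdot,u),v\rangle$) is not even defined for the class of nonlinearities allowed by the theorem. The remaining pieces of your proposal --- the pointwise inequality for the truncation, the membership $(u-k)_+\in\tilde H^s(\Omega)$, the level-set iteration, and the final seminorm bound --- are correct and match the paper.
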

\begin{proof}
We proceed in four steps.

\noindent \framebox{1} Let us assume, for the moment, that, in addition, there exists $\varphi \in L^{\mathfrak{t}}(\Omega)$ such that $|a(x,u)| \leq |\varphi(x)|$ for a.e.~$x \in \Omega$ and $u \in \mathbb{R}$ and that
$a(\cdot,0) = 0$. Define the mapping 
\[
\mathfrak{A}: \tilde H^s(\Omega) \rightarrow H^{-s}(\Omega):
\quad
\langle \mathfrak{A} u,v \rangle =   \mathcal{A}(u,v)  +  \langle a(\cdot,u),v \rangle
\quad
\forall v \in \tilde H^{s}(\Omega).
\]
Since $\mathcal{A}$ is bilinear, continuous, and coercive on $\tilde H^s(\Omega) \times \tilde H^s(\Omega)$ and $a = a(x,u)$ is globally bounded and monotone increasing in $u$, it is immediate that $\mathfrak{A}$ is \emph{well-defined}, \emph{monotone}, and \emph{coercive}. In addition, since $a = a(x,u)$ is continuous in $u$ for a.e.~$x \in \Omega$, dominated convergence yields the \emph{hemicontinuity} of $\mathfrak{A}$. Existence and uniqueness of $u \in \tilde H^s(\Omega)$ follows from the main theorem on monotone operators \cite[Theorem 26.A]{MR1033498}, \cite[Theorem 2.18]{MR3014456}. Set $v = u$ \eqref{eq:weak_semilinear_pde} to obtain $ | u |_{H^s(\mathbb{R}^n)} \lesssim \| f \|_{H^{-s}(\Omega)}$.

\noindent \framebox{2}  Define, for $k >0$, $v_k$ by
$
v_k(x) = u(x) - k \textrm{ if } u(x) \geq k,
$
$
v_k(x) = 0 \textrm{ if } |u(x)| < k,
$
and
$
v_k(x) = u(x) + k \textrm{ if } u(x) \leq- k.
$
We also define the set 
\[
\Omega(k):= \{ x \in \Omega: |u(x)| \geq k\}.
\]
Since $a = a(x,u)$ is monotone increasing in $u$ and $a(\cdot,0) = 0$, we have $\langle a(\cdot,u), v_k \rangle = \int_{\Omega} a(x,u(x)) v_k(x) \mathrm{d}x \geq 0$. This yields $\mathcal{A}(u,v_k) \leq \langle f, v_k \rangle$. The relations and inequalities (2.22)--(2.30) in \cite{MR3745164} reveal that $\mathcal{A}(v_k,v_k) \leq \mathcal{A}(u,v_k)$. We can thus obtain $ \| v_k \|^2_{s}  = \mathcal{A}(v_k,v_k) \leq \langle f, v_k \rangle$. Define $\mathfrak{q}:=2n/(n-2s)$. Thus, for $t<2n/(n-2s)$,
\[
\| v_k \|_{L^{\mathfrak{q}}(\Omega)}^2 \lesssim | v_k |^2_{H^s(\mathbb{R}^n)} \lesssim \| v_k \|_{L^{\mathfrak{q}}(\Omega)} | \Omega(k)|^{\frac{1}{t}} \|f \|_{L^{r}(\Omega)},
\quad
\mathfrak{q}^{-1} + r^{-1} + t^{-1} = 1.
\]
On the other hand, $\| v_k \|_{L^{\mathfrak{q}}(\Omega)}^{\mathfrak{q}} = \int_{\Omega(k)} |v_k(x)|^{\mathfrak{q}} \mathrm{d}x = \int_{\Omega(k)} | |u(x)|-k |^{\mathfrak{q}} \mathrm{d}x$. Let $h>k$, then $\Omega(h) \subset \Omega(k)$ and $ \int_{\Omega(k)} | |u(x)|-k |^{\mathfrak{q}} \mathrm{d}x \geq (h-k)^{\mathfrak{q}} |\Omega(h)|$. Consequently,
\[
(h-k) |\Omega(h)|^{\frac{1}{\mathfrak{q}}} \leq \| v_k \|_{L^{\mathfrak{q}}(\Omega)} \lesssim (  | \Omega(k)|^{\frac{1}{\mathfrak{q}}})^{\frac{\mathfrak{q}}{t}} \|f \|_{L^{r}(\Omega)}.
\]
Since $\mathfrak{q}/t > 1$, an application of \cite[Lemma B.1]{MR1786735} yields the existence of $\mathfrak{h} > 0$ such that $|\Omega(\mathfrak{h}) | = 0$, which implies that $u \in L^{\infty}(\Omega)$ and $\| u \|_{L^{\infty}(\Omega)} \lesssim \| f\|_{L^r(\Omega)}$.

\noindent \framebox{3} We relax the assumption of step 1. Define, for $k >0$, $a_k$ by
$
a_k(x,u) = a(x,k) \textrm{ if } u > k,
$
$
a_k(x,u) = a(x,u) \textrm{ if } |u| \leq k,
$
and
$
a_k(x,u) = a(x,-k) \textrm{ if } u < - k.
$
In view of \eqref{eq:assumption_on_phi_state_equation}, there exists $\varphi_k \in L^{\mathfrak{t}}(\Omega)$ such that, for a.e.~$x\in\Omega$ and $u \in \mathbb{R}$, $|a_k(x,u)| \leq |\varphi_k(x)|$. We thus invoke the arguments of the previous steps to guarantee the existence of a unique solution $u$ to problem \eqref{eq:weak_semilinear_pde} with $a$ replaced by $a_k$. In addition, we have $| u |_{H^s(\mathbb{R}^n)} + \| u \|_{L^{\infty}(\Omega)} \leq c \| f \|_{L^{r}(\Omega)}$ with $c>0$ being independent of $a_k$ and thus of $k$. Choose $k> c \| f \|_{L^{r}(\Omega)}$ so that  $a_k(x,u(x))= a(x,u(x))$ for a.e.~$x \in \Omega$. Consequently, $u$ solves \eqref{eq:weak_semilinear_pde}. Uniqueness of solutions follows from the monotonicity of $a$.

\noindent \framebox{4} We remove the condition $a(\cdot,0) = 0$ by replacing $a(\cdot,u)$ by $a(\cdot,u) - a(\cdot,0)$.
\end{proof}

\section{The optimal control problem}
\label{sec:optimal_control_problem}

In this section, we analyze the following weak version of the \emph{fractional semilinear optimal control problem}: Find
\begin{equation}\label{eq:min}
\min \{ J(u,z): (u,z) \in \tilde H^s(\Omega) \times \mathbb{Z}_{ad}\}
\end{equation}
subject to the \emph{fractional, semilinear,} and \emph{elliptic} state equation
\begin{equation}\label{eq:weak_st_eq}
\mathcal{A}( u, v)+(a(\cdot,u),v)_{L^2(\Omega)}=(z,v)_{L^2(\Omega)} \quad \forall v \in \tilde H^s(\Omega).
\end{equation}
Here, $ \mathbb{Z}_{ad}:=\{ v \in L^2(\Omega):  \mathfrak{a} \leq v(x) \leq \mathfrak{b}~\text{a.e.}~x \in \Omega \}$ and $\mathfrak{a},\mathfrak{b} \in \mathbb{R}$ are such that $\mathfrak{a} < \mathfrak{b}$.

Let $r>n/2s$ and $a = a(x,u): \Omega \times \mathbb{R} \rightarrow \mathbb{R}$ be a monotone increasing in $u$ Carath\'eodory function satisfying \eqref{eq:assumption_on_phi_state_equation} and 
$a(\cdot,0) \in L^r(\Omega)$. Within this setting, Theorem \ref{thm:stata_equation_well_posedness} guarantees the existence of a unique solution $u$ to problem \eqref{eq:weak_st_eq}. We thus introduce the control to state map $\mathcal{S}: L^{r}(\Omega) \rightarrow \tilde H^s(\Omega) \cap L^{\infty}(\Omega)$ which, given a control $z$, associates to it the unique state $u$ that solves \eqref{eq:weak_st_eq}. With $\mathcal{S}$ at hand, we also introduce the reduced cost functional $j:\mathbb{Z}_{ad} \rightarrow \mathbb{R}$ by the relation $j(z)=J(\mathcal{S}z,z)$.

\subsection{Existence of optimal controls}
The existence of an optimal state--control pair $(\bar{u},\bar{z})$ is as follows.

\begin{theorem}[existence of an optimal pair]\label{thm:existence_control}
Let $n \geq 2$, $s \in (0,1)$ and $r>n/2s$. Let $a = a(x,u) : \Omega \times \mathbb{R} \rightarrow \mathbb{R}$ be a Carath\'eodory function that is monotone increasing in $u$. Let $L = L(x,u) : \Omega \times \mathbb{R} \rightarrow \mathbb{R}$ be a Carath\'eodory function. Assume that, for every $\mathfrak{m}>0$, there exist $\varphi_{\mathfrak{m}} \in L^{r}(\Omega)$, with $r>n/2s$, and $\psi_{\mathfrak{m}} \in L^1(\Omega)$ such that
\begin{equation}
|a(x,u)| \leq \varphi_{\mathfrak{m}}(x), \quad |L(x,u)| \leq \psi_{\mathfrak{m}}(x), \quad a.e.~x \in \Omega,~ u \in [-\mathfrak{m},\mathfrak{m}].
\label{eq:aux_for_existence}
\end{equation}
Thus, \eqref{eq:min}--\eqref{eq:weak_st_eq} admits at least one solution $(\bar{u},\bar{z}) \in \tilde H^s(\Omega) \cap L^{\infty}(\Omega) \times \mathbb{Z}_{ad}$.
\end{theorem}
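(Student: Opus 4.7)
The plan is to apply the direct method of calculus of variations. Let $\{(u_k,z_k)\}_{k\in \mathbb{N}}$ with $u_k=\mathcal{S}z_k$ be a minimizing sequence for \eqref{eq:min}--\eqref{eq:weak_st_eq}, so that $j(z_k)=J(u_k,z_k)\to \inf_{z\in\mathbb{Z}_{ad}} j(z)$; note that $j$ is bounded from below because $J\geq 0$ whenever $L\geq 0$, and in general the assumption $|L(x,u)|\leq \psi_{\mathfrak{m}}(x)$ with $\psi_{\mathfrak{m}}\in L^{1}(\Omega)$ combined with the forthcoming $L^{\infty}$-bound on $u_k$ gives a uniform lower bound for $J(u_k,z_k)$. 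Since $\mathfrak{a}\leq z_k\leq \mathfrak{b}$ a.e., the sequence $\{z_k\}$ is uniformly bounded in $L^{\infty}(\Omega)$, hence in $L^{r}(\Omega)$ for every $r>n/2s$. The bound $|a(x,u)|\leq \varphi_{\mathfrak{m}}(x)$ for $u\in[-\mathfrak{m},\mathfrak{m}]$ with $\varphi_{\mathfrak{m}}\in L^{r}(\Omega)$ yields in particular $a(\cdot,0)\in L^{r}(\Omega)$, and since $\Omega$ is bounded and $\mathfrak{t}=2n/(n+2s)\leq r$, the hypothesis \eqref{eq:assumption_on_phi_state_equation} is also satisfied. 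Therefore Theorem \ref{thm:stata_equation_well_posedness} applies and delivers the uniform estimate
\begin{equation*}
|u_k|_{H^s(\mathbb{R}^n)}+\|u_k\|_{L^{\infty}(\Omega)}\lesssim \|z_k-a(\cdot,0)\|_{L^{r}(\Omega)}\leq C,
\end{equation*}
with $C$ independent of $k$.

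Next, I would extract subsequences (not relabeled) such that $z_k \rightharpoonup \bar z$ in $L^{2}(\Omega)$ and $u_k\rightharpoonup \bar u$ in $\tilde H^{s}(\Omega)$. Since $\mathbb{Z}_{ad}$ is convex and closed in $L^{2}(\Omega)$, Mazur's lemma yields $\bar z\in \mathbb{Z}_{ad}$. The compact embedding $H^{s}(\Omega)\hookrightarrow L^{\mathfrak{q}}(\Omega)$ for $\mathfrak{q}<2n/(n-2s)$ provides $u_k\to \bar u$ strongly in $L^{\mathfrak{q}}(\Omega)$ and, up to a further subsequence, a.e.~in $\Omega$; the pointwise limit $\bar u$ inherits the bound $\|\bar u\|_{L^{\infty}(\Omega)}\leq C$.

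The central step, which I expect to be the main obstacle, is to verify that $\bar u=\mathcal{S}\bar z$, i.e.~to pass to the limit in \eqref{eq:weak_st_eq}. The term $\mathcal{A}(u_k,v)$ passes because $u_k\rightharpoonup \bar u$ in $\tilde H^{s}(\Omega)$ and $\mathcal{A}(\cdot,v)$ is a continuous linear form on this space. The term $(z_k,v)_{L^{2}(\Omega)}$ passes by weak convergence in $L^{2}(\Omega)$. For the nonlinear term, set $\mathfrak{m}:=\sup_k \|u_k\|_{L^{\infty}(\Omega)}<\infty$: the pointwise convergence $u_k(x)\to \bar u(x)$ together with the continuity of $a(x,\cdot)$ gives $a(x,u_k(x))\to a(x,\bar u(x))$ a.e., while $|a(x,u_k(x))|\leq \varphi_{\mathfrak{m}}(x)\in L^{r}(\Omega)\subset L^{2}(\Omega)$ allows the dominated convergence theorem to conclude $a(\cdot,u_k)\to a(\cdot,\bar u)$ in $L^{2}(\Omega)$. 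Passing to the limit in \eqref{eq:weak_st_eq} tested against any $v\in \tilde H^{s}(\Omega)$ identifies $\bar u=\mathcal{S}\bar z$.

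Finally, I would establish the lower semicontinuity of $J$. The control term $\tfrac{\alpha}{2}\|z\|^{2}_{L^{2}(\Omega)}$ is convex and continuous on $L^{2}(\Omega)$ and hence weakly lower semicontinuous, so $\tfrac{\alpha}{2}\|\bar z\|_{L^{2}(\Omega)}^{2}\leq \liminf_{k\to\infty}\tfrac{\alpha}{2}\|z_k\|_{L^{2}(\Omega)}^{2}$. For $\int_{\Omega} L(x,u(x))\mathrm{d}x$, the continuity of $L(x,\cdot)$, the a.e.~convergence $u_k\to \bar u$, and the domination $|L(x,u_k(x))|\leq \psi_{\mathfrak{m}}(x)\in L^{1}(\Omega)$ yield, via dominated convergence, $\int_{\Omega}L(x,u_k(x))\mathrm{d}x\to \int_{\Omega}L(x,\bar u(x))\mathrm{d}x$. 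Combining these two facts gives
\begin{equation*}
J(\bar u,\bar z)\leq \liminf_{k\to\infty} J(u_k,z_k)=\inf_{z\in\mathbb{Z}_{ad}} j(z),
\end{equation*}
so $(\bar u,\bar z)$ is an optimal pair, which completes the proof.
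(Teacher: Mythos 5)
Your proposal is correct and follows essentially the same route as the paper: a minimizing sequence, the uniform $\tilde H^s(\Omega)\cap L^{\infty}(\Omega)$ bound on the states from Theorem \ref{thm:stata_equation_well_posedness}, dominated convergence to pass to the limit in the nonlinear term and identify $\bar u = \mathcal{S}\bar z$, and weak lower semicontinuity of the quadratic control term. The only (harmless) cosmetic differences are that you extract $z_k \rightharpoonup \bar z$ in $L^2(\Omega)$ and invoke Mazur's lemma where the paper uses weak-$*$ convergence in $L^{\infty}(\Omega)$, and you are slightly more explicit than the paper in checking the hypotheses of Theorem \ref{thm:stata_equation_well_posedness} and the finiteness of the infimum.
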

\begin{proof}
Let $\{ (u_k,z_k) \}_{k=1}^{\infty}$ be a minimizing sequence, i.e., for $k \in \mathbb{N}$, $z_k \in \mathbb{Z}_{ad}$ and $u_k = \mathcal{S}z_k \in \tilde H^s(\Omega)$ are such that $J(u_k,z_k) \rightarrow \mathfrak{j}:= \inf \{ J(\mathcal{S}z,z): z \in \mathbb{Z}_{ad} \}$ as $k \uparrow \infty$. Since $\mathbb{Z}_{ad}$ is bounded in $L^{\infty}(\Omega)$, there exits a nonrelabeled subsequence $\{ z_k \}_{k=1}^{\infty}$ such that $z_k \mathrel{\ensurestackMath{\stackon[1pt]{\rightharpoonup}{\scriptstyle\ast}}} \bar{z}$ in $L^{\infty}(\Omega)$ as $k \uparrow \infty$. On the other hand, since, for every $k \in \mathbb{N}$, $z_k \in \mathbb{Z}_{ad}$, Theorem \ref{thm:stata_equation_well_posedness} yields the existence of $\mathfrak{m}>0$ such that $|u_k(x)| \leq \mathfrak{m}$ for a.e.~$x \in \Omega$ and $k \in \mathbb{N}$. This implies that $\{a(\cdot,u_k)\}_{k=1}^{\infty}$ is bounded in $L^r(\Omega)$. We can thus conclude the existence of a nonrelabeled subsequence $\{ u_k \}_{k=1}^{\infty}$ such that $u_k \rightharpoonup \bar{u}$ in $\tilde H^s(\Omega)$ and $u_k \rightarrow \bar{u}$ in $L^2(\Omega)$ as $k \uparrow \infty$; $\bar u$ is the natural candidate for the desired optimal state.

We now observe that, for $k \in \mathbb{N}$, $u_k \in \tilde H^s(\Omega) \cap L^{\infty}(\Omega)$ solves
\begin{equation}
\mathcal{A}(u_k,v) + \langle a(\cdot,u_k),v \rangle = \langle z_k ,v \rangle \quad \forall v \in \tilde H^s(\Omega).
\label{eq:u_n}
\end{equation}
Since there exists $M>0$ such that $|u_k(x)| \leq M$ for a.e.~$x \in \Omega$ and $k \in \mathbb{N}$ and the set $\mathfrak{M}:=\{ v \in L^{r}(\Omega): |v(x)|\leq M ~\mathrm{a.e}.~x \in \Omega\}$ is weakly sequentially closed, we conclude that $\bar u \in \mathfrak{M}$. We can thus invoke \eqref{eq:aux_for_existence} and the Lebesgue dominated convergence theorem
to obtain $\| a(\cdot,\bar u) - a(\cdot,u_k) \|_{L^{r}(\Omega)} \rightarrow 0$ as $k \uparrow \infty$. In view of the previous convergence results, passing to the limit in \eqref{eq:u_n} yields $\bar u = \mathcal{S} \bar z$.

On the other hand, the map $L^2(\Omega) \ni v \mapsto \| v\|_{L^2(\Omega)}^2 \in \mathbb{R}$ is continuous and convex; it is thus weakly lower continuous. Consequently,
\[
\mathfrak{j} = \lim_{k \uparrow \infty} J(u_k,z_k) = \int_{\Omega} L(x,\bar u(x) ) \mathrm{d}x + \liminf_{k \uparrow \infty} \frac{\alpha}{2} \| z_k \|^2_{L^2(\Omega)} \geq J(\bar u, \bar z).
\]
The Lebesgue dominated convergence theorem combined with \eqref{eq:aux_for_existence} and the the fact that $ u_k \rightarrow \bar u$ in $L^2(\Omega)$, as $k \uparrow 0$, yield
$\left| \int_{\Omega} [ L(x,\bar u(x)) - L(x,u_k(x)) ] \mathrm{d}x \right| 
\rightarrow 0$ as $k \uparrow 0$.
\end{proof}

\begin{remark}[assumptions on $a$]
\rm
To obtain the result of Theorem \ref{thm:existence_control} we have assumed \eqref{eq:aux_for_existence}. Observe that \eqref{eq:assumption_on_phi_state_equation} can be guaranteed because $n/2s>2n/(n+2s)$.
\end{remark}

\subsection{First order necessary optimality conditions}\label{sec:1st_order}
In this section, we analyze differentiability properties for the control to state map $\mathcal{S}$ and derive first order necessary optimality conditions. Since the optimal control problem \eqref{eq:min}--\eqref{eq:weak_st_eq} is not convex, we analyze optimality conditions in the context of local solutions.

We begin by precisely introducing the concept of local minimum. Let $q \in [1,\infty)$ and $\epsilon >0$. We denote by $B_{\epsilon}(\bar z)$ the closed ball in $L^q(\Omega)$ of radius $\epsilon$ centered at $\bar z$.

\begin{definition}[local minimum] 
Let $q \in [1,\infty)$. We say that $\bar z \in \mathbb{Z}_{ad}$ is a local minimum, or locally optimal, in $L^{q}(\Omega)$ for \eqref{eq:min}--\eqref{eq:weak_st_eq} if there exists $\epsilon >0$ such that 
$j(\bar z) \leq j(z)$ for every $z \in B_{\epsilon}(\bar z) \cap \mathbb{Z}_{ad}$. 
\end{definition}

\begin{remark}[local optimality in $L^2(\Omega) \implies$ local optimality in $L^q(\Omega)$]
\rm
Since $\mathbb{Z}_{ad}$ is bounded in $L^{\infty}(\Omega)$, it can be proved that if $\bar z \in \mathbb{Z}_{ad}$ is a (strict) local minimum in $L^2(\Omega)$, then $\bar z \in \mathbb{Z}_{ad}$ is a (strict) local minimum in $L^q(\Omega)$ \cite[Section 5]{MR3586845}.
\label{rk:local_minimum}
\end{remark}

In what follows, we will operate in $L^2(\Omega)$ regarding local optimally.

\begin{theorem}[differentiability properties of $\mathcal{S}$]
\label{thm:properties_C_to_S}
Let $n \geq 2$, $s \in (0,1)$, and $r > n/2s$. Assume that \textnormal{\ref{A1}}, \textnormal{\ref{A2}}, and \textnormal{\ref{A3}} hold. Then, the control to state map $\mathcal{S}: L^{r}(\Omega) \rightarrow \tilde H^{s}(\Omega)\cap L^{\infty}(\Omega)$ is of class $C^2$. In addition, if $z,w \in L^{r}(\Omega)$, then $\phi = \mathcal{S}'(z) w \in \tilde H^s(\Omega)  \cap L^{\infty}(\Omega)$ corresponds to the unique solution to the problem
\begin{equation}\label{eq:aux_adjoint}
\mathcal{A}(\phi,v)+\left(\tfrac{\partial a}{\partial u}(\cdot,u)\phi, v \right)_{L^2(\Omega)}=(w,v)_{L^2(\Omega)} \quad \forall v \in \tilde H^s(\Omega),
\end{equation}
where $u = \mathcal{S}z$. If $w_1,w_2\in L^{r}(\Omega)$, then $\psi=\mathcal{S}''(z)(w_1,w_2)\in \tilde H^s(\Omega) \cap L^{\infty}(\Omega)$ corresponds to the unique solution to 
\begin{equation}\label{eq:aux_adjoint_diff_2}
\mathcal{A}(\psi, v)+\left(\tfrac{\partial a}{\partial u}(\cdot,u)\psi,v\right)_{L^2(\Omega)}=-\left(\tfrac{\partial^2 a}{\partial u^2}(\cdot,u)\phi_{w_1}\phi_{w_2}, v \right)_{L^2(\Omega)} \quad \forall v \in \tilde H^s(\Omega),
\end{equation}
where $u = \mathcal{S}z$ and $\phi_{w_i}=\mathcal{S}'(z)w_i$, with $i \in \{1,2\}$.
\end{theorem}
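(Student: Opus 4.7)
I would prove all the conclusions simultaneously by applying the implicit function theorem (IFT) to a map that encodes the state equation on the Banach space $X := \tilde H^s(\Omega) \cap L^{\infty}(\Omega)$ endowed with $\|u\|_X := |u|_{H^s(\mathbb{R}^n)} + \|u\|_{L^\infty(\Omega)}$. Let $T:L^r(\Omega)\to X$ be the solution operator for the purely linear problem $\mathcal{A}(w,v)=(f,v)_{L^2(\Omega)}$, which is a bounded linear map by Theorem \ref{thm:stata_equation_well_posedness} applied with $a\equiv 0$. Define
\begin{equation*}
F:L^r(\Omega)\times X \to X, \qquad F(z,u) := u + T\bigl(a(\cdot,u)-z\bigr).
\end{equation*}
Then $F(z,u)=0$ if and only if $u=\mathcal{S}(z)$, so the result reduces to showing that $F$ is $C^2$ and that $\partial_u F(z,\mathcal{S}(z))$ is a topological isomorphism of $X$, after which IFT delivers $\mathcal{S}\in C^2(L^r(\Omega);X)$.

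\textbf{Step 1: smoothness of $F$.} Since $T$ is bounded and linear, I would reduce the question to the Nemytskii operator $N_a:X\to L^r(\Omega)$, $u\mapsto a(\cdot,u)$. Using $a(x,u)-a(x,0)=u\int_0^1 \tfrac{\partial a}{\partial u}(x,\theta u)\,d\theta$ together with \ref{A1} and the uniform bound on $\tfrac{\partial a}{\partial u}$ in \ref{A3}, $N_a$ maps bounded subsets of $X$ into $L^\infty(\Omega)\hookrightarrow L^r(\Omega)$. The bounds on $\tfrac{\partial^i a}{\partial u^i}$ for $i=1,2$ and the Lipschitz property of $\tfrac{\partial^2 a}{\partial u^2}$ on $[-\mathfrak{m},\mathfrak{m}]$ in \ref{A3} constitute the classical hypotheses under which $N_a$ is of class $C^2$, with $N_a'(u)\phi=\tfrac{\partial a}{\partial u}(\cdot,u)\phi$ and $N_a''(u)(\phi_1,\phi_2)=\tfrac{\partial^2 a}{\partial u^2}(\cdot,u)\phi_1\phi_2$.

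\textbf{Step 2: invertibility of $\partial_u F$.} For $u_0=\mathcal{S}(z_0)$, the partial derivative acts as $\phi\mapsto \phi+T\bigl(\tfrac{\partial a}{\partial u}(\cdot,u_0)\phi\bigr)$. Given $g\in X$, I would seek $\phi\in X$ solving $\partial_u F(z_0,u_0)\phi=g$ by setting $\eta:=g-\phi$; the equation is then equivalent to
\begin{equation*}
\mathcal{A}(\eta,v)+\Bigl(\tfrac{\partial a}{\partial u}(\cdot,u_0)\eta,v\Bigr)_{L^2(\Omega)}=\Bigl(\tfrac{\partial a}{\partial u}(\cdot,u_0)\,g,v\Bigr)_{L^2(\Omega)} \quad \forall v\in\tilde H^s(\Omega).
\end{equation*}
Since $\tfrac{\partial a}{\partial u}(\cdot,u_0)\in L^\infty(\Omega)$ is nonnegative by \ref{A2} and $g\in L^\infty(\Omega)$, the right-hand side is in $L^\infty(\Omega)\subset L^r(\Omega)$. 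Applying Theorem \ref{thm:stata_equation_well_posedness} to the monotone, globally bounded nonlinearity $(x,\eta)\mapsto \tfrac{\partial a}{\partial u}(x,u_0(x))\,\eta$ produces a unique $\eta\in X$ with a continuous estimate, whence $\phi=g-\eta\in X$; uniqueness and the bounded inverse follow from the coercivity of the bilinear form on $\tilde H^s(\Omega)$ together with the open mapping theorem.

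\textbf{Step 3: identification of derivatives.} Once the IFT yields $\mathcal{S}\in C^2$, I would differentiate $F(z,\mathcal{S}(z))=0$ once in the direction $w$ and apply $T^{-1}$ (that is, pair with $\mathcal{A}(\cdot,v)$) to recover \eqref{eq:aux_adjoint} for $\phi=\mathcal{S}'(z)w$. A second differentiation in directions $(w_1,w_2)$, using the product rule on $\tfrac{\partial a}{\partial u}(\cdot,\mathcal{S}(z))\mathcal{S}'(z)w_1$ and Step 1's formula for $N_a''$, produces \eqref{eq:aux_adjoint_diff_2} for $\psi=\mathcal{S}''(z)(w_1,w_2)$. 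The main obstacle is Step 2: a naive Lax--Milgram argument only places the inverse in $\tilde H^s(\Omega)$, not in $X$. The rewriting $\eta=g-\phi$ is precisely the device that converts the test functional $\mathcal{A}(g,\cdot)$, which is not a priori representable by an $L^r$ source, into the bounded source $\tfrac{\partial a}{\partial u}(\cdot,u_0)g\in L^\infty(\Omega)$, at which point the truncation argument of Theorem \ref{thm:stata_equation_well_posedness} (Step 2 of its proof) can be invoked to obtain the requisite $L^\infty$-bound.
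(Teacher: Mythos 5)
Your argument is correct and follows essentially the same route as the paper: the paper's proof introduces the identical map $\mathfrak{F}(u,z)=u-\mathfrak{R}(z-a(\cdot,u))$ on $\tilde H^s(\Omega)\cap L^{\infty}(\Omega)$ and invokes the implicit function theorem to conclude $\mathcal{S}\in C^2$, identifying the derivatives by differentiating $\mathfrak{F}(\mathcal{S}z,z)=0$, exactly as you do. The only differences are presentational: you extract first-order differentiability from the same IFT application instead of separately adapting \cite[Theorem 4.17]{MR2583281}, and your substitution $\eta=g-\phi$, which converts the invertibility of $\partial_u F$ into a problem with right-hand side $\tfrac{\partial a}{\partial u}(\cdot,u_0)g\in L^{\infty}(\Omega)$ amenable to Theorem \ref{thm:stata_equation_well_posedness}, supplies a detail that the paper merely asserts.
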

\begin{proof}
The first order Fr\'echet differentiability of $\mathcal{S}$ from $L^{r}(\Omega)$ into $\tilde H^s(\Omega) \cap L^{\infty}(\Omega)$ follows from a slight modification of the proof of \cite[Theorem 4.17]{MR2583281} that basically entails to replace $H^1(\Omega)$ by $\tilde H^s(\Omega)$ and $C(\bar \Omega)$ by $L^{\infty}(\Omega)$. These arguments also show that $\phi=\mathcal{S}'(z)w\in \tilde H^{s}(\Omega)\cap L^{\infty}(\Omega)$ corresponds to the unique solution to \eqref{eq:aux_adjoint}; since $w \in L^r(\Omega)$ and $\frac{\partial a}{ \partial u} (x,u) \geq 0$ for a.e.~$x\in \Omega$ and all $u \in \mathbb{R}$, problem \eqref{eq:aux_adjoint} is well-posed.

The second order Fr\'echet differentiability of $\mathcal{S}$ can be obtained by utilizing the implicit function theorem \cite[Theorem 4.24]{MR2583281}. Let us introduce
the linear map 
\[
\mathfrak{R}: L^r(\Omega) \rightarrow \tilde H^s(\Omega) \cap L^{\infty}(\Omega):
\qquad 
\mathfrak{f} \mapsto \mathfrak{u},
\qquad
\mathcal{A}(\mathfrak{u},v) = \langle \mathfrak{f},v\rangle \quad \forall v \in \tilde H^s(\Omega).
\]
Define $\mathfrak{F}: [\tilde H^s(\Omega) \cap L^{\infty}(\Omega)] \times L^r(\Omega) \rightarrow \tilde H^s(\Omega) \cap L^{\infty}(\Omega)$ by $\mathfrak{F}(u,z) := u - \mathfrak{R}(z - a(\cdot,u))$. We first observe that $\mathfrak{F}$ is of class $C^2$. Second, $\mathfrak{F}(\mathcal{S}z,z) = 0$. Third, since $\partial \mathfrak{F} / \partial u (u,z) v =  v + \mathfrak{R} \partial a/ \partial u (\cdot,u)v$, it can be deduced that the linear map $\partial \mathfrak{F} / \partial u (u,z)$ is invertible from $\tilde H^s(\Omega) \cap L^{\infty}(\Omega)$ into itself. The implicit function theorem thus implies that $\mathcal{S}$ is of class $C^2$. The fact that $\psi$ solves \eqref{eq:aux_adjoint_diff_2} follows from differentiating the relation $\mathfrak{F}(\mathcal{S}z,z) = 0$; see \cite[Theorem 4.24 (ii)]{MR2583281} for details.
\end{proof}

The following result is standard: If $\bar{z}\in \mathbb{Z}_{ad}$ denotes a locally optimal control for problem \eqref{eq:min}--\eqref{eq:weak_st_eq}, then 
$
j'(\bar z) (z - \bar z) \geq 0
$
for all $z \in \mathbb{Z}_{ad}$ \cite[Lemma 4.18]{MR2583281}. To explore this inequality, we define the adjoint state $p \in \tilde H^s(\Omega) \cap L^{\infty}(\Omega)$ as the solution to
\begin{equation}\label{eq:adj_eq}
\mathcal{A}(v,p) + \left(\tfrac{\partial a}{\partial u}(\cdot,u)p,v\right)_{L^2(\Omega)} = \left(\tfrac{\partial L}{\partial u}(\cdot,u),v\right)_{L^2(\Omega)}
\quad \forall v \in \tilde H^s(\Omega).
\end{equation}
Assumption \textnormal{\ref{A2}} guarantees that $\partial a/\partial u (x,u) \geq 0$ for a.e.~$x \in \Omega$ and for all $u \in \mathbb{R}$.  Assumption \textnormal{\ref{B2}} yields $\partial L/ \partial u(\cdot,u) \in L^r(\Omega)$ for $r>n/2s$. The existence of a unique solution $p \in \tilde H^s(\Omega) \cap L^{\infty}(\Omega)$ to problem \eqref{eq:adj_eq} is thus immediate.

We present first order necessary optimality conditions for problem \eqref{eq:min}--\eqref{eq:weak_st_eq}.

\begin{theorem}[first order necessary optimality conditions]
\label{thm:optimality_cond}
Let $n \geq 2$, $s \in (0,1)$, and $r>n/2s$. Assume that \textnormal{\ref{A1}}--\textnormal{\ref{A3}} and \textnormal{\ref{B1}}--\textnormal{\ref{B2}} hold. Then, every locally optimal control $\bar z \in  \mathbb{Z}_{ad}$ satisfies the variational inequality
\begin{equation}\label{eq:var_ineq}
\left( \bar p +\alpha \bar{z},z-\bar{z}\right)_{L^2(\Omega)}\geq 0 \quad \forall z\in \mathbb{Z}_{ad},
\end{equation}
where $\bar p \in \tilde H^s(\Omega) \cap L^{\infty}(\Omega)$ denotes the solution to \eqref{eq:adj_eq} with $u$ replaced by $\bar{u} = \mathcal{S} \bar z$.
\end{theorem}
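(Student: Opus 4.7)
The plan is to derive the variational inequality by combining the abstract first-order condition $j'(\bar z)(z-\bar z) \geq 0$ (valid for any locally optimal control) with the chain rule, and then use the adjoint equation \eqref{eq:adj_eq} to eliminate the linearized state $\phi = \mathcal{S}'(\bar z)(z-\bar z)$ from the expression.

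First I would recall that, since $\bar z$ is locally optimal for the reduced problem, standard variational arguments (cf.\ \cite[Lemma 4.18]{MR2583281}) yield
\begin{equation*}
j'(\bar z)(z-\bar z) \geq 0 \qquad \forall z \in \mathbb{Z}_{ad}.
\end{equation*}
Because Theorem \ref{thm:properties_C_to_S} guarantees that $\mathcal{S}$ is of class $C^2$ from $L^r(\Omega)$ to $\tilde H^s(\Omega) \cap L^\infty(\Omega)$ and assumptions \textnormal{\ref{B1}}--\textnormal{\ref{B2}} yield the $C^2$-differentiability of $u \mapsto \int_\Omega L(x,u(x))\,dx$ on $L^\infty(\Omega)$, the chain rule gives, for arbitrary $w \in L^r(\Omega)$,
\begin{equation*}
j'(\bar z) w = \Bigl(\tfrac{\partial L}{\partial u}(\cdot,\bar u), \phi\Bigr)_{L^2(\Omega)} + \alpha (\bar z, w)_{L^2(\Omega)},
\end{equation*}
where $\bar u = \mathcal{S}\bar z$ and $\phi = \mathcal{S}'(\bar z) w \in \tilde H^s(\Omega) \cap L^\infty(\Omega)$ solves the linearized problem \eqref{eq:aux_adjoint} with $u$ replaced by $\bar u$.

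The key step is to rewrite the first term using the adjoint state. Testing \eqref{eq:adj_eq} (evaluated at $\bar u$) with $v = \phi \in \tilde H^s(\Omega)$ gives
\begin{equation*}
\mathcal{A}(\phi,\bar p) + \Bigl(\tfrac{\partial a}{\partial u}(\cdot,\bar u)\bar p, \phi\Bigr)_{L^2(\Omega)} = \Bigl(\tfrac{\partial L}{\partial u}(\cdot,\bar u), \phi\Bigr)_{L^2(\Omega)},
\end{equation*}
while testing the linearized equation for $\phi$ with $v = \bar p \in \tilde H^s(\Omega)$ gives
\begin{equation*}
\mathcal{A}(\phi,\bar p) + \Bigl(\tfrac{\partial a}{\partial u}(\cdot,\bar u)\phi, \bar p\Bigr)_{L^2(\Omega)} = (w,\bar p)_{L^2(\Omega)}.
\end{equation*}
The two left-hand sides coincide (by symmetry of $\mathcal{A}$ and of the zeroth-order term), so subtracting yields $(\tfrac{\partial L}{\partial u}(\cdot,\bar u), \phi)_{L^2(\Omega)} = (w, \bar p)_{L^2(\Omega)}$. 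Substituting back produces the clean representation
\begin{equation*}
j'(\bar z) w = (\bar p + \alpha \bar z, w)_{L^2(\Omega)}.
\end{equation*}
Applying this with $w = z - \bar z$ and invoking the abstract first-order condition gives \eqref{eq:var_ineq}.

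I do not expect any serious obstacle: all ingredients are already in place. The only point to be careful about is the regularity needed to make the duality pairings legitimate. Since $\bar p, \phi \in \tilde H^s(\Omega) \cap L^\infty(\Omega)$, and since \textnormal{\ref{A3}} together with the $L^\infty$-bound on $\bar u$ (provided by Theorem \ref{thm:stata_equation_well_posedness}) ensures $\tfrac{\partial a}{\partial u}(\cdot,\bar u) \in L^\infty(\Omega)$, and \textnormal{\ref{B2}} yields $\tfrac{\partial L}{\partial u}(\cdot,\bar u) \in L^r(\Omega) \hookrightarrow H^{-s}(\Omega)$, all the pairings above are well defined and the manipulations are fully justified.
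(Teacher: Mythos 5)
Your proposal is correct and follows essentially the same route as the paper: the abstract condition $j'(\bar z)(z-\bar z)\geq 0$, the chain-rule formula for $j'(\bar z)w$, and the duality trick of testing the adjoint equation with $\phi$ and the linearized state equation with $\bar p$ to obtain $(\tfrac{\partial L}{\partial u}(\cdot,\bar u),\phi)_{L^2(\Omega)}=(w,\bar p)_{L^2(\Omega)}$. The closing remarks on why the pairings are well defined are a welcome (and correct) addition that the paper leaves implicit.
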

\begin{proof} 
Define $\ell: L^{\infty}(\Omega) \rightarrow \mathbb{R}$ by $\ell(u) = \int_{\Omega} L(x,u(x)) \mathrm{d}x$ and observe that \textnormal{\ref{B1}}--\textnormal{\ref{B2}} yield the Fr\'echet differentiability of $\ell$ on $L^{\infty}(\Omega)$.
Since $\mathcal{S}$ is differentiable as a map from $L^r(\Omega)$ into $H^s(\Omega) \cap L^{\infty}(\Omega)$, we thus deduce the Fr\'echet differentiability of $j$ as a map from $L^{\sigma}(\Omega)$ to $\mathbb{R}$, where $\sigma = \max \{ n/2s, 2 \}$, upon noticing that $L^2(\Omega) \ni z \mapsto \| z \|^2_{L^2(\Omega)} \in \mathbb{R}$ is also differentiable. Basic computations thus reveal
\begin{equation}
j'(\bar z) h = \int_{\Omega} \left( \tfrac{\partial L}{\partial u} (x, \mathcal{S} \bar z(x) ) \mathcal{S}'(\bar z)h(x)  +  \alpha \bar z(x) h(x) \right) \mathrm{d}x, \quad h \in L^{\sigma}(\Omega).
\label{eq:aux_variational_inequality}
\end{equation}
Set $h = z - \bar z \in \mathbb{Z}_{ad}$ and define $\chi= \mathcal{S}'(\bar z)h$. Setting $v = \chi$ in problem \eqref{eq:adj_eq} and $v = \bar p$ in the problem that $\chi$ solves allow us to obtain $(z - \bar z, \bar p)_{L^2(\Omega)} = (\tfrac{ \partial L}{\partial u}(\cdot,\bar u),\chi)_{L^2(\Omega)}$. Replace this identity into \eqref{eq:aux_variational_inequality} to obtain \eqref{eq:var_ineq}. This concludes the proof.
\end{proof}

Define $\Pi_{[\mathfrak{a},\mathfrak{b}]} : L^1(\Omega) \rightarrow  \mathbb{Z}_{ad}$ by $\Pi_{[\mathfrak{a},\mathfrak{b}]}(v) := \min\{ \mathfrak{b}, \max\{ v, \mathfrak{a} \} \}$ a.e. in  $\Omega$. We present the following projection formula: If $\bar{z} \in \mathbb{Z}_{ad}$ denotes a locally optimal control for problem \eqref{eq:min}--\eqref{eq:weak_st_eq}, then
 \cite[Section 4.6]{MR2583281}
\begin{equation}\label{eq:projection_control} 
\bar{z}(x):=\Pi_{[\mathfrak{a},\mathfrak{b}]}(-\alpha^{-1}\bar{p}(x)) \textrm{ a.e.}~x \in \Omega.
\end{equation}
Since $\bar p \in \tilde H^s(\Omega) \cap L^{\infty}(\Omega)$ and $s \in (0,1)$, it is immediate that $\bar z \in H^{s}(\Omega) \cap L^{\infty}(\Omega)$; further regularity properties for $\bar z$ are obtained in Theorem \ref{thm:regularity_space} below.

\subsection{Second order optimality conditions}\label{sec:2nd_order}
In Theorem \ref{thm:optimality_cond} we derived a first order \emph{necessary} optimality condition. Since our optimal control problem is not convex, sufficiency requires the use of second order optimality conditions. The purpose of this section is thus to derive second order \emph{necessary} and \emph{sufficient} optimality conditions. To accomplish this task, we begin by introducing some preliminary concepts. Let $\bar{z} \in \mathbb{Z}_{ad}$ satisfies \eqref{eq:var_ineq}. Define $\bar{\mathfrak{p}} :=  \bar p + \alpha \bar z$. Observe that \eqref{eq:var_ineq} immediately yields
\begin{equation}\label{eq:derivative_j}
\bar{\mathfrak{p}}(x) 
\begin{cases}
= 0  & \text{ a.e.}~x \in \Omega \text{ if } \texttt{a}< \bar{z} < \texttt{b}, \\
\geq  0  & \text{ a.e.}~x \in \Omega \text{ if }\bar{z}=\texttt{a}, \\
\leq  0  & \text{ a.e.}~x \in \Omega \text{ if } \bar{z}=\texttt{b}.
\end{cases}
\end{equation}
Define the \emph{cone of critial directions}
$
C_{\bar{z}}:=\{v\in L^2(\Omega): \eqref{eq:sign_cond} \text{ holds and } \bar{\mathfrak{p}}(x) \neq 0 \implies v(x) = 0 \},
$
where condition \eqref{eq:sign_cond} reads as follows:
\begin{equation}
\label{eq:sign_cond}
v(x)
\geq 0  \text{ a.e.}~x\in\Omega \text{ if } \bar{z}(x)=\mathfrak{a},
\qquad
v(x) \leq 0 \text{ a.e.}~x\in\Omega \text{ if } \bar{z}(x)=\mathfrak{b}.
\end{equation}

The following result is instrumental.
\begin{proposition}[$j$ is of class $C^2$]
\label{pro:diff_properties_j}
Let $n \geq 2$, $s \in (0,1)$, $r>n/2s$, and $\sigma = \max \{ 2, n/2s \}$. Assume that \textnormal{\ref{A1}}--\textnormal{\ref{A3}} and \textnormal{\ref{B1}}--\textnormal{\ref{B2}} hold. Then the reduced cost $j: L^{\sigma}(\Omega) \rightarrow \mathbb{R}$ is of class $C^2$. In addition, for $z,w_1,w_2 \in L^{\sigma}(\Omega)$, we have
\begin{equation}\label{eq:charac_j2}
j''(z)(w_1,w_2)
=
\int_{\Omega}
\left(
\frac{\partial^2 L}{\partial u^2}(x,u)
\phi_{w_1}\phi_{w_2}
+ 
\alpha w_1 w_2 
-
p \frac{\partial^2 a}{\partial u^2}(x,u)\phi_{w_1}\phi_{w_2} 
\right)
\mathrm{d}x,
\end{equation}
where $u = \mathcal{S}z$, $p$ solves \eqref{eq:adj_eq} and $\phi_{w_i}=\mathcal{S}'(z)w_i$, with $i \in \{1,2 \}$.
\end{proposition}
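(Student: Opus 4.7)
The plan is to write $j(z) = \ell(\mathcal{S}z) + \tfrac{\alpha}{2}\|z\|_{L^2(\Omega)}^2$ with $\ell(u) := \int_\Omega L(x,u(x))\,\mathrm{d}x$, and then combine the $C^2$ smoothness of $\mathcal{S}$ from Theorem \ref{thm:properties_C_to_S} with the $C^2$ smoothness of $\ell$ and of the quadratic term via the chain rule. The quadratic term is manifestly $C^2$ on $L^2(\Omega)$ and hence on $L^\sigma(\Omega)$ because $\sigma \geq 2$. Assumptions \textnormal{\ref{B1}}--\textnormal{\ref{B2}} ensure that the Nemytskii operators $u \mapsto \tfrac{\partial L}{\partial u}(\cdot,u)$ and $u \mapsto \tfrac{\partial^2 L}{\partial u^2}(\cdot,u)$ map $L^\infty(\Omega)$ continuously into $L^r(\Omega)$, from which standard arguments (of the kind already invoked in the proof of Theorem \ref{thm:optimality_cond}) yield that $\ell: L^\infty(\Omega) \to \mathbb{R}$ is of class $C^2$ with $\ell'(u)v = \int_\Omega \tfrac{\partial L}{\partial u}(x,u)v\,\mathrm{d}x$ and $\ell''(u)(v,w) = \int_\Omega \tfrac{\partial^2 L}{\partial u^2}(x,u)vw\,\mathrm{d}x$. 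Because $\sigma \geq n/2s$, Theorem \ref{thm:properties_C_to_S} applies with $L^\sigma(\Omega)$ serving the role of the space of controls, so composition of $\ell$ with $\mathcal{S}$ delivers the claimed $C^2$ regularity of $j$ on $L^\sigma(\Omega)$.

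The formula \eqref{eq:charac_j2} then follows from a chain-rule computation combined with an adjoint duality argument. Writing $u = \mathcal{S}z$, $\phi_{w_i} = \mathcal{S}'(z)w_i$, and $\psi = \mathcal{S}''(z)(w_1,w_2)$, a direct differentiation produces $j''(z)(w_1,w_2) = \int_\Omega \tfrac{\partial^2 L}{\partial u^2}(x,u)\phi_{w_1}\phi_{w_2}\,\mathrm{d}x + \int_\Omega \tfrac{\partial L}{\partial u}(x,u)\psi\,\mathrm{d}x + \alpha (w_1,w_2)_{L^2(\Omega)}$. To eliminate the unwanted $\psi$-term I would test the adjoint equation \eqref{eq:adj_eq} with $v = \psi$ and the sensitivity equation \eqref{eq:aux_adjoint_diff_2} with $v = p$; the two resulting identities share the terms $\mathcal{A}(\psi,p)$ and $\int_\Omega \tfrac{\partial a}{\partial u}(x,u)p\psi\,\mathrm{d}x$ (the latter by symmetry of the product), so subtracting them yields the identity $\int_\Omega \tfrac{\partial L}{\partial u}(x,u)\psi\,\mathrm{d}x = -\int_\Omega p\,\tfrac{\partial^2 a}{\partial u^2}(x,u)\phi_{w_1}\phi_{w_2}\,\mathrm{d}x$, and substitution into the expression for $j''$ produces \eqref{eq:charac_j2}.

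The main obstacle is the function-space bookkeeping behind the choice $\sigma = \max\{2,n/2s\}$: the quadratic penalty forces the domain of $j$ to embed into $L^2(\Omega)$, while the $C^2$ theory for $\mathcal{S}$ developed in Theorem \ref{thm:properties_C_to_S} requires the controls to live in a Lebesgue space whose exponent exceeds $n/2s$, and $\sigma$ is precisely the minimal exponent that simultaneously accommodates both. Beyond this reconciliation, the argument is a routine combination of the chain rule with the adjoint calculus and closely follows the template of \cite[Chapter 4]{MR2583281} and \cite{MR3586845}.
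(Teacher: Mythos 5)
Your proposal is correct and follows essentially the same route as the paper: decompose $j(z)=\ell(\mathcal{S}z)+\tfrac{\alpha}{2}\|z\|_{L^2(\Omega)}^2$, use the $C^2$ regularity of $\mathcal{S}$ from Theorem \ref{thm:properties_C_to_S} together with the $C^2$ regularity of $\ell$ on $L^\infty(\Omega)$ guaranteed by \ref{B1}--\ref{B2}, and conclude via the chain rule. The adjoint duality step you spell out (testing \eqref{eq:adj_eq} with $\psi$ and \eqref{eq:aux_adjoint_diff_2} with $p$ to eliminate the $\mathcal{S}''$ term) is precisely the standard argument the paper delegates to \cite[Section 4.10]{MR2583281}.
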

\begin{proof}
The fact that $j$ is first order differentiable follows from Theorem \ref{thm:optimality_cond}. Theorem \ref{thm:properties_C_to_S} guarantees that $\mathcal{S}$ is second order Fr\'echet differentiable as a map from $L^{r}(\Omega)$ into $\tilde H^s(\Omega) \cap L^{\infty}(\Omega)$. In view of \textnormal{\ref{B1}}--\textnormal{\ref{B2}}, the map $u \mapsto \ell(u) := \int_{\Omega}L(x,u(x)) \mathrm{d}x$ is second order Fr\'echet differentiable as well as a map from $L^{\infty}(\Omega)$
to $\mathbb{R}$. The chain rule allows us to conclude that $j \in C^2$. The identity \eqref{eq:charac_j2} follows from the arguments elaborated in \cite[Section 4.10]{MR2583281}.
\end{proof}

We are now in position to formulate second order \emph{necessary} optimality conditions.

\begin{theorem}[second order necessary optimality conditions]
\label{thm:nec_opt_cond}
Let $n \in \{ 2,3 \}$ and $s > n/4$. If $\bar{z} \in \mathbb{Z}_{ad}$ denotes a locally minimum for problem \eqref{eq:min}--\eqref{eq:weak_st_eq}, then 
\begin{equation}\label{eq:second_order_nec}
j''(\bar{z})v^2 \geq 0 \quad \forall v\in C_{\bar{z}},
\end{equation}
where $C_{\bar{z}}:=\{v\in L^2(\Omega): \eqref{eq:sign_cond} \text{ holds and } \bar{\mathfrak{p}}(x) \neq 0 \implies v(x) = 0 \}$.
\end{theorem}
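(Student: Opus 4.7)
The plan is to adapt the classical variational argument of Casas and Tröltzsch to the fractional setting. Fix $v \in C_{\bar z}$. The first step is to construct a family $\{v_k\}_{k \in \mathbb{N}} \subset L^{\infty}(\Omega)$ of bounded perturbations that still respects the critical cone structure. A natural choice is to define $v_k$ by truncating the values of $v$ at height $k$ and, additionally, setting $v_k(x) = 0$ on the set where $\bar z(x)$ lies within $1/k$ of $\{\mathfrak{a}, \mathfrak{b}\}$ and $\bar{\mathfrak{p}}(x) = 0$. Since $v$ already satisfies \eqref{eq:sign_cond} and vanishes on $\{\bar{\mathfrak{p}} \neq 0\}$, $v_k$ inherits both properties, and $v_k \to v$ in $L^2(\Omega)$ by dominated convergence. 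Taking $t_k = 1/k^2$ guarantees $\bar z + t_k v_k \in \mathbb{Z}_{ad}$ for every $k$.

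Next, local optimality of $\bar z$ in $L^2(\Omega)$ together with $t_k v_k \to 0$ in $L^2(\Omega)$ gives $j(\bar z + t_k v_k) \geq j(\bar z)$ for large $k$. Proposition \ref{pro:diff_properties_j} then allows Taylor's expansion
\begin{equation*}
0 \leq j(\bar z + t_k v_k) - j(\bar z) = t_k\, j'(\bar z) v_k + \tfrac{t_k^2}{2}\, j''(z_k^{\ast})(v_k,v_k), \qquad z_k^{\ast} := \bar z + \theta_k t_k v_k,
\end{equation*}
for some $\theta_k \in (0,1)$. From \eqref{eq:derivative_j} and the construction of $v_k$, one has $\bar{\mathfrak{p}}(x) v_k(x) = 0$ a.e.~in $\Omega$, whence $j'(\bar z) v_k = (\bar{\mathfrak{p}}, v_k)_{L^2(\Omega)} = 0$. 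Dividing by $t_k^2/2$ leaves $j''(z_k^{\ast})(v_k, v_k) \geq 0$ for every sufficiently large $k$.

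The conclusion follows once $j''(z_k^{\ast})(v_k, v_k) \to j''(\bar z)(v,v)$. This step is where the hypothesis $s > n/4$ plays a decisive role. Using \eqref{eq:charac_j2}, the limit requires convergence of $\phi_{v_k}^2$ in $L^1(\Omega)$, where $\phi_{v_k}$ solves \eqref{eq:aux_adjoint} with right-hand side $v_k$ and coefficient evaluated at $u_k^{\ast} = \mathcal{S} z_k^{\ast}$. Since $v_k \to v$ in $L^2(\Omega)$ and $z_k^{\ast} \to \bar z$ in $L^{\infty}(\Omega)$, Theorem \ref{thm:stata_equation_well_posedness} and stability of \eqref{eq:aux_adjoint} yield $\phi_{v_k} \to \phi_v$ in $\tilde H^s(\Omega)$. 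The continuous embedding $\tilde H^s(\Omega) \hookrightarrow L^4(\Omega)$, valid exactly when $s \geq n/4$, then delivers $\phi_{v_k}^2 \to \phi_v^2$ in $L^2(\Omega)$. Combined with the uniform bounds from \textnormal{\ref{A3}} and \textnormal{\ref{B2}} on $\tfrac{\partial^2 a}{\partial u^2}(\cdot, u_k^{\ast})$ and $\tfrac{\partial^2 L}{\partial u^2}(\cdot, u_k^{\ast})$ and the convergence of the adjoint $p_k^{\ast} \to \bar p$ (obtained by inspecting \eqref{eq:adj_eq}), Lebesgue's dominated convergence delivers the desired limit.

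The main obstacle is the passage to the limit in the quadratic terms of $j''$ in \eqref{eq:charac_j2}. Controlling the products $\phi_{v_k}^2$ only from $L^2$-convergence of $v_k$ forces the embedding $\tilde H^s(\Omega) \hookrightarrow L^4(\Omega)$, which is precisely why the restriction $s > n/4$ appears in the statement. A secondary technical point is the construction of $v_k$: it must simultaneously be bounded in $L^{\infty}(\Omega)$, satisfy \eqref{eq:sign_cond}, vanish on $\{\bar{\mathfrak{p}} \neq 0\}$, produce feasible perturbations $\bar z + t_k v_k \in \mathbb{Z}_{ad}$, and converge to $v$ in $L^2(\Omega)$; carrying out all four requirements in concert is delicate but standard.
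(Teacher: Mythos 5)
Your proposal is correct and follows essentially the same route as the paper: the same truncation/cut-off construction of $v_k$ near the active bounds, the same Taylor expansion with the first-order term vanishing because $v_k\in C_{\bar z}$, and the same passage to the limit using $v_k\to v$ in $L^2(\Omega)$ and the continuity of the quadratic form \eqref{eq:charac_j2} under $s>n/4$. The only cosmetic difference is that the paper first sends $\rho\downarrow 0$ for fixed $k$ (so it only needs continuity of $j''$ at $\bar z$ in the base point) and then $k\uparrow\infty$, whereas you couple $t_k=k^{-2}$ to $k$ and take a single diagonal limit, which requires the joint convergence $j''(z_k^{\ast})v_k^2\to j''(\bar z)v^2$ — but that is exactly the argument the paper carries out in step 2 of the proof of Theorem \ref{thm:suff_opt_cond}, so nothing is missing.
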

\begin{proof}
Let $v\in C_{\bar{z}}$. Define, for every $k \in \mathbb{N}$ and for a.e.~$x\in \Omega$, the function
\begin{equation*}
v_{k}(x):=
\begin{cases}
\qquad \quad 0 \quad &\text{ if }\quad x: \mathfrak{a} < \bar{z}(x) < \mathfrak{a}+ \tfrac{1}{k}, 
\quad 
 \mathfrak{b}-\tfrac{1}{k} < \bar{z}(x) <  \mathfrak{b}, 
\\
\Pi_{[-k,k]}(v(x)) &\text{ otherwise}. 
\end{cases}
\end{equation*}
Since $v \in C_{\bar z}$, we deduce that $v_k \in C_{\bar z}$. In addition, $|v_{k}(x)|\leq |v(x)|$ and $v_{k}(x)\rightarrow v(x)$ for a.e.~$x \in \Omega$ as $k \uparrow \infty$; therefore $v_k \rightarrow v$ in $L^2(\Omega)$. Now, since $\bar z + \rho v_k \in \mathbb{Z}_{ad}$, for $\rho \in (0,k^{-2}]$, and $\bar z$ is locally optimal for $j$ we arrive, for $\rho$ sufficiently small, at
\begin{equation}
0 \leq \tfrac{1}{\rho}[ j(\bar z + \rho v_k) - j(\bar z) ] = j'(\bar z) v_k + \tfrac{\rho}{2} j''(\bar z + \theta_k \rho v_k) v_k^2, \quad \theta_k \in (0,1).
\label{eq:aux_second_order}
\end{equation}
Observe that \eqref{eq:aux_variational_inequality} and $v_k \in C_{\bar z}$ reveal that $j'(\bar z) v_k = \int_{\Omega} \bar{\mathfrak{p}}(x) v_k(x) \mathrm{d}x = 0$. We thus divide by $\rho$ in \eqref{eq:aux_second_order}, utilize \eqref{eq:charac_j2}, and let $\rho \downarrow 0$ to obtain $j''(\bar z) v_k^2 \geq 0$.  Let $k \uparrow \infty$ and invoke \eqref{eq:charac_j2}, again, and $\| v_k - v\|_{L^2(\Omega)} \rightarrow 0$ to conclude.
\end{proof}

We now provide a \emph{sufficient} second order optimality condition with a minimal gap with respect to the \emph{necessary} one derived in Theorem \ref{thm:nec_opt_cond}.

\begin{theorem}[second order sufficient optimality conditions]
\label{thm:suff_opt_cond}
Let $n \in \{ 2,3 \}$ and $s > n/4$. Let $\bar u \in \tilde H^s(\Omega)$, $\bar p \in \tilde H^s(\Omega)$, and $\bar{z} \in \mathbb{Z}_{ad}$ satisfy the first order optimality conditions \eqref{eq:weak_st_eq}, \eqref{eq:adj_eq}, and \eqref{eq:var_ineq}. If
\begin{equation}\label{eq:second_order_suff}
j''(\bar{z})v^2 > 0 \quad \forall v\in C_{\bar{z}} \setminus \{ 0 \},
\end{equation}
then there exists $\kappa > 0$ and $\mu >0$ such that
\begin{equation}
\label{eq:quadratic_growing}
j( z) \geq j(\bar z) + \tfrac{\kappa}{2} \| z - \bar  z\|_{L^2(\Omega)}^2 
\end{equation}
for every $z \in \mathbb{Z}_{ad}$ such that $\| \bar z - z \|_{L^2(\Omega)} \leq \mu$.
\end{theorem}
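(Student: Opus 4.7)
The plan is a contradiction argument for the quadratic growth bound. Suppose the conclusion fails: then for each $k \in \mathbb{N}$ there exists $z_k \in \mathbb{Z}_{ad}$ with $\rho_k := \|z_k - \bar z\|_{L^2(\Omega)} \to 0$ and $j(z_k) < j(\bar z) + \tfrac{1}{2k}\rho_k^2$. Introduce $v_k := \rho_k^{-1}(z_k - \bar z)$ so that $\|v_k\|_{L^2(\Omega)} = 1$; extracting a subsequence, $v_k \rightharpoonup v$ in $L^2(\Omega)$. Because $s > n/4$ we have $\sigma = 2$ in Proposition~\ref{pro:diff_properties_j}, hence $j \in C^2(L^2(\Omega))$ and Taylor's theorem is available in this topology.

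First, I would verify that $v$ belongs to $C_{\bar z}$. The sign conditions \eqref{eq:sign_cond} on $v_k$ are inherited by $v$ because the corresponding cones are convex and $L^2$--closed, hence weakly closed (Mazur). For the annihilation condition, Taylor's theorem combined with the boundedness of $j''$ on a neighborhood of $\bar z$ gives $j'(\bar z)(z_k - \bar z) \leq j(z_k) - j(\bar z) + C\rho_k^2 \leq C'\rho_k^2$, while the variational inequality \eqref{eq:var_ineq} gives $j'(\bar z)(z_k - \bar z) \geq 0$. Dividing by $\rho_k$ yields $0 \leq \int_\Omega \bar{\mathfrak{p}}\, v_k \, dx \to 0$; on the other hand, \eqref{eq:derivative_j} together with the sign conditions on $v_k$ forces the integrand $\bar{\mathfrak{p}}\,v_k$ to be pointwise nonnegative, so $\bar{\mathfrak{p}}\,v_k \to 0$ in $L^1(\Omega)$. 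Passing to the weak $L^2$--limit yields $\int_\Omega \bar{\mathfrak{p}}\,v\,dx = 0$ with $\bar{\mathfrak{p}}\,v \geq 0$ a.e., which forces $v(x) = 0$ on $\{\bar{\mathfrak{p}} \neq 0\}$.

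Second, I would pass to the limit in $j''(\xi_k) v_k^2$ at the Taylor intermediate point $\xi_k = \bar z + \theta_k(z_k - \bar z)$. Using \eqref{eq:charac_j2}, I write $j''(z)(w,w) = \alpha \|w\|^2_{L^2(\Omega)} + Q(z,w)$, where $Q(z,w)$ is a quadratic form in the linearized state $\phi_w = \mathcal{S}'(z)w$ with coefficients built from $\partial^2 L/\partial u^2$, $\partial^2 a/\partial u^2$, and the adjoint $p$; these coefficients are uniformly bounded by virtue of the $L^\infty$ bounds from Theorem~\ref{thm:stata_equation_well_posedness} and assumptions \ref{A3}, \ref{B2}. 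Since $\mathcal{S}'(\xi_k) v_k$ is uniformly bounded in $\tilde H^s(\Omega)$ and $\tilde H^s(\Omega) \hookrightarrow L^2(\Omega)$ compactly, the joint convergence $\xi_k \to \bar z$ in $L^2(\Omega)$ and $v_k \rightharpoonup v$ in $L^2(\Omega)$ implies $\mathcal{S}'(\xi_k)v_k \to \mathcal{S}'(\bar z)v$ strongly in $L^2(\Omega)$, hence $Q(\xi_k, v_k) \to Q(\bar z, v)$. The remaining term $\alpha \|v_k\|^2_{L^2(\Omega)}$ is weakly lower semicontinuous, so $\liminf_k j''(\xi_k) v_k^2 \geq j''(\bar z) v^2$.

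To close, divide $j(z_k) - j(\bar z) < \tfrac{1}{2k}\rho_k^2$ by $\rho_k^2$, use $j'(\bar z)v_k \geq 0$, and take the $\liminf$ to obtain $j''(\bar z) v^2 \leq 0$. If $v \neq 0$ this contradicts \eqref{eq:second_order_suff} since $v \in C_{\bar z}\setminus\{0\}$. If $v = 0$, the compactness step yields $\mathcal{S}'(\xi_k) v_k \to 0$ in $L^2(\Omega)$ so $Q(\xi_k,v_k) \to 0$, whereas $\alpha \|v_k\|^2_{L^2(\Omega)} = \alpha$ for every $k$; thus $\liminf_k j''(\xi_k) v_k^2 = \alpha > 0$, again a contradiction. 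I expect the main technical point to be the joint strong convergence $\mathcal{S}'(\xi_k) v_k \to \mathcal{S}'(\bar z) v$ in $L^2(\Omega)$ under varying linearization points $\xi_k$; the uniform $L^\infty$ bounds on states guaranteed by Theorem~\ref{thm:stata_equation_well_posedness} together with \ref{A3} provide just enough uniform control on the variable coefficient $\partial a/\partial u(\cdot, \mathcal{S}\xi_k)$ appearing in the linearized equation \eqref{eq:aux_adjoint} to justify this passage as in the local case, so the remaining steps reduce to routine bookkeeping.
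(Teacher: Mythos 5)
Your proof is correct and follows essentially the same contradiction argument as the paper: normalize $v_k=(z_k-\bar z)/\rho_k$, show the weak limit $v$ lies in $C_{\bar z}$, pass to the limit in $j''$ along the Taylor intermediate points (strong $L^2$ convergence of the linearized states by compactness plus weak lower semicontinuity of the $\alpha$-term) to force $v=0$ via \eqref{eq:second_order_suff}, and then use the surviving term $\alpha\|v_k\|_{L^2(\Omega)}^2=\alpha>0$ to reach the final contradiction. The only (harmless) deviation is in establishing that $\bar{\mathfrak{p}}(x)\neq 0$ implies $v(x)=0$: you use a Taylor bound at $\bar z$ together with the pointwise nonnegativity of $\bar{\mathfrak{p}}\,v_k$, whereas the paper goes through the mean value theorem at an intermediate point and the convergence $\hat p_k\to\bar p$; both routes are valid.
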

\begin{proof}
We proceed by contradiction and assume that for every $k \in \mathbb{N}$ there exists an element $z_k \in \mathbb{Z}_{ad}$ such that
\begin{equation}
\label{eq:contradiction_argument}
\| \bar z - z _{k} \|_{L^2(\Omega)} < \tfrac{1}{k},
\quad
j( z_k) < j(\bar z) + \tfrac{1}{2k} \| \bar  z - z_k\|_{L^2(\Omega)}^2.
\end{equation}
Define $\rho_k:= \|  z _{k} - \bar z  \|_{L^2(\Omega)}$ and $v_k:= \rho_k^{-1} ( z_k - \bar z)$. Notice that there exists a nonrelabeled subsequence $\{ v_k \}_{k=1}^{\infty} \subset L^2(\Omega)$ such that $v_k \rightharpoonup v$ in $L^2(\Omega)$ as $k \uparrow \infty$. 

We now proceed on the basis of three steps:

\noindent \framebox{1} We prove that $v \in C_{\bar z}$. Since the set of elements satisfying \eqref{eq:sign_cond} is closed and convex in $L^2(\Omega)$ and, for every $k \in \mathbb{N}$, $v_k$ belongs to this set, we deduce that $v$ satisfies \eqref{eq:sign_cond}. It suffices to prove that $\bar{\mathfrak{p}}(x) \neq 0$ implies $v(x) = 0$. In view of \eqref{eq:var_ineq}, we  deduce that
$\int_{\Omega} \bar{\mathfrak{p}}(x) v(x) \mathrm{d}x \geq 0$ because 
$
\int_{\Omega} \bar{\mathfrak{p}}(x) v_k(x) \mathrm{d}x 
=
\rho_k^{-1} \int_{\Omega} \bar{\mathfrak{p}}(x)(z_k(x) - \bar z(x) )\mathrm{d}x \geq 0$. On the other hand, observe that \eqref{eq:contradiction_argument} and the mean value theorem reveal that
\[
j(z_k) - j(\bar z) = j'(\bar z + \theta_k( z_k - \bar z )) (z_k - \bar z) < \tfrac{1}{2k} \| \bar  z - z_k\|_{L^2(\Omega)}^2 = \tfrac{\rho_k^2}{2k}, 
\quad
\theta_k \in (0,1).
\]
Divide by $\rho_k$ and let $k \uparrow \infty$ to arrive at $ j'(\bar z + \theta_k( z_k - \bar z )) v_k < (2k)^{-1}\rho_k  \rightarrow 0$ as $k \uparrow \infty$. Define $\hat z_k := \bar z + \theta_k( z_k - \bar z )$. Since $s>n/4$ and $\hat z_k \rightarrow \bar z$ in $L^2(\Omega)$, as $k \uparrow \infty$, we have
\[
\hat u_k:= \mathcal{S}(\hat z_k) \rightarrow \mathcal{S}(\bar z) = \bar u 
\textrm{ in } \tilde H^s(\Omega) \cap L^{\infty}(\Omega),
\quad
\tfrac{\partial L}{\partial u} (\cdot ,\hat u_k) \rightarrow \tfrac{\partial L}{\partial u} (\cdot, \bar u) 
\textrm{ in }
L^{r}(\Omega),
\]
upon invoking \textnormal{\ref{B2}}. Consequently, $\hat p_k \rightarrow \bar p$ in $\tilde H^s(\Omega) \cap L^{\infty}(\Omega)$ as $k \uparrow \infty$. Here, $\hat p_k$ denotes the solution to \eqref{eq:adj_eq} with 
$u$ replaced by $\hat u_k$. Thus,
\[
\int_{\Omega} \bar{\mathfrak{p}}(x) v(x) \mathrm{d}x  =
\lim_{k \uparrow \infty} 
\int_{\Omega} \left[ \hat p_k (x)  + \alpha \hat{z_k}(x) \right]v_k(x)\mathrm{d}x
=
\lim_{k \uparrow \infty} 
 j'( \bar z + \theta_k( z_k - \bar z ) )v_k \leq 0.
\]
We have thus deduced that $\int_{\Omega} \bar{\mathfrak{p}}(x) v(x) \mathrm{d}x = \int_{\Omega} |\bar{\mathfrak{p}}(x) v(x)| \mathrm{d}x = 0$. Consequently, $\bar{\mathfrak{p}}(x) \neq 0$ implies $v(x) = 0$ for a.e.~$x\in \Omega$. This proves that $v \in C_{\bar z}$.

\noindent \framebox{2} We prove that $v=0$. We begin with an application of Taylor's theorem and write
\[
j(z_k) = j(\bar z) + \rho_k j'(\bar z) v_k + \tfrac{\rho_k^2}{2}j''(\hat z_k) v_k^2, \quad
\theta_k \in (0,1),
\]
where $\hat z_k = \bar z + \theta_k(z_k - \bar z)$ and $\rho_kv_k = z_k - \bar z$. Now, $j'(\bar z) v_k \geq 0$ and \eqref{eq:contradiction_argument} yield 
\[
\tfrac{\rho_k^2}{2}j''(\hat z_k) v_k^2 \leq j(z_k) - j(\bar z) < \tfrac{1}{2k} \| \bar  z - z_k\|_{L^2(\Omega)}^2.
\]
This implies that $j''(\hat z_k) v_k^2 < k^{-1}$. Consequently, $j''(\hat z_k) v_k^2 < k^{-1} \rightarrow 0$ as $k \uparrow \infty$.

We now prove that $j''(\bar z) v^2 \leq \liminf_{k} j''(\hat z_k) v_k^2$. We begin by noticing that
\begin{equation*}
j''(\hat z_k)v_k^2
=
\int_{\Omega}
\left(
\frac{\partial^2 L}{\partial u^2}(x, \hat u_k)
\phi_{v_k}^2
-
\hat p_k \frac{\partial^2 a}{\partial u^2}(x,\hat u_k)\phi_{v_k}^2
+ 
\alpha v_k^2 
\right)
\mathrm{d}x.
\end{equation*}
As $k \uparrow \infty$, $\hat z_k \rightarrow \bar z$ and $v_k \rightharpoonup v$ in $L^2(\Omega)$. We thus have
$
\hat u_k \rightarrow \bar u
$
and
$
\hat p_k \rightarrow \bar p 
$
in $\tilde H^s(\Omega) \cap L^{\infty}(\Omega)$ and $\phi_{v_k} \rightharpoonup \phi_{v}$ in $\tilde H^s(\Omega)$; the latter implies that $\phi_{v_k} \rightarrow \phi_{v}$ in $L^{\mathfrak{q}}(\Omega)$, as $k \uparrow \infty$, for $\mathfrak{q} < 2n/(n-2s)$. Invoke \textnormal{\ref{B2}} to obtain 
\begin{multline*}
\left| \int_{\Omega}
\left(
\tfrac{\partial^2 L}{\partial u^2}(x, \hat u_k)
\phi_{v_k}^2
-
\tfrac{\partial^2 L}{\partial u^2}(x, \bar u)
\phi_{v}^2
\right)
\mathrm{d}x\right|
\leq
\| \phi_{v_k} \|^2_{L^{\mathfrak{q}}(\Omega)} \left \|  \tfrac{\partial^2 L}{\partial u^2}(\cdot, \hat u_k) - \tfrac{\partial^2 L}{\partial u^2}(\cdot, \bar u)\right\|_{L^r(\Omega)}
\\
+ \| \phi_{\mathfrak{m}} \|_{L^r(\Omega)}   \| \phi_v + \phi_{v_k} \|_{L^{\mathfrak{q}}(\Omega)}  \| \phi_v - \phi_{v_k} \|_{L^{\mathfrak{q}}(\Omega)} \rightarrow 0, \quad k \uparrow \infty.
\end{multline*}
On the other hand, invoke \textnormal{\ref{A2}} to derive
\begin{multline*}
\left| \int_{\Omega}
\left(
\bar p \tfrac{\partial^2 a}{\partial u^2}(x, \bar u )\phi_{v}^2
-
\hat p_k \tfrac{\partial^2 a}{\partial u^2}(x,\hat u_k)\phi_{v_k}^2
\right)
\mathrm{d}x\right|
\leq C_{\mathfrak{m}} \| \phi_v  \|^2_{L^{\EO{\mathfrak{q}}}(\Omega)} \| \bar p - \hat p_k \|_{L^{\EO{r}}(\Omega)} 
\\
+ 
C_{\mathfrak{m}} \| \hat p_k \|_{L^{r}(\Omega)}  \left( \| \bar u - \hat u_k \|_{L^{\infty}(\Omega)}  \| \phi_v  \|^2_{L^{\mathfrak{q}}(\Omega)}
+
\| \phi_v + \phi_{v_k}  \|_{L^{\mathfrak{q}}(\Omega)}
\| \phi_v - \phi_{v_k}  \|_{L^{\mathfrak{q}}(\Omega)}
\right) \rightarrow 0
\end{multline*}
as $k \uparrow \infty$. Finally, observe that $\| v \|^2_{L^2(\Omega)}  \leq \liminf_{k \uparrow \infty} \| v_k\|^2_{L^2(\Omega)}$ because $\| \cdot \|^2_{L^2(\Omega)}$ is weakly lower semicontinuous. We thus conclude that $j''(\bar z) v^2 \leq \liminf_{k \uparrow \infty} j''(\hat z_k) v_k^2$. 

Finally, since $ \liminf_{k \uparrow \infty} j''(\hat z_k) v_k^2\leq 0$ and $v \in C_{\bar z}$, \eqref{eq:second_order_suff} implies that $v = 0$.

\noindent \framebox{3} Since $v=0$, we have that $\phi_{v_k} \rightarrow 0$ in $L^{\mathfrak{q}}(\Omega)$, for $\mathfrak{q} < 2n/(n-2s)$, as $k \uparrow \infty$. Consequently, from the identity
\[
\alpha = \alpha \| v_k\|_{L^2(\Omega)}^2 =  j''(\hat z_k) v_k^2 
-
\int_{\Omega}
\left(
\frac{\partial^2 L}{\partial u^2}(x, \hat u_k)
\phi_{v_k}^2
-
\hat p_k \frac{\partial^2 a}{\partial u^2}(x,\hat u_k)\phi_{v_k}^2
\right)
\mathrm{d}x
\] 
and the fact that $\liminf_{k \uparrow \infty} j''(\hat z_k) v_k^2 \leq 0$, we conclude that $\alpha \leq 0$.
This contradicts the fact that $\alpha > 0$ and concludes the proof.
\end{proof}

Define, for $\tau >0$,
\begin{equation}
C_{\bar{z}}^\tau:=\{v\in L^2(\Omega): \eqref{eq:sign_cond} \textnormal{ holds and } |\bar{\mathfrak{p}}(x)|>\tau \implies v(x)=0 \}.
\label{def:critical_cone_tau}
\end{equation}

\begin{theorem}[equivalent optimality conditions]
\label{thm:equivalent_opt_cond}
Let $n \in \{ 2,3 \}$ and $s > n/4$. Let $\bar{u} \in \tilde H^s(\Omega)$, $\bar p \in \tilde H^s(\Omega)$, and $\bar z \in \mathbb{Z}_{ad}$ satisfy the first order optimality conditions \eqref{eq:weak_st_eq}, \eqref{eq:adj_eq}, and \eqref{eq:var_ineq}. Thus, \eqref{eq:second_order_suff} is equivalent to
\begin{equation}\label{eq:second_order_equivalent}
\exists \mu, \tau >0: \quad j''(\bar{z})v^2 \geq \mu \|v\|_{L^2(\Omega)}^2 \quad \forall v \in C_{\bar{z}}^\tau,
\end{equation}
where $C_{\bar{z}}^\tau$ is defined in \eqref{def:critical_cone_tau}.
\end{theorem}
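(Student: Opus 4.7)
The plan is to prove the two implications separately. The direction \eqref{eq:second_order_equivalent} $\Rightarrow$ \eqref{eq:second_order_suff} is immediate: since any $v \in C_{\bar z}$ satisfies the strictly stronger requirement $\bar{\mathfrak{p}}(x) \neq 0 \implies v(x) = 0$, we have $C_{\bar z} \subset C_{\bar z}^\tau$ for every $\tau > 0$, so if $v \in C_{\bar z} \setminus \{0\}$, then $j''(\bar z)v^2 \geq \mu \|v\|_{L^2(\Omega)}^2 > 0$.

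For the converse, I would argue by contradiction. Assume \eqref{eq:second_order_suff} holds but \eqref{eq:second_order_equivalent} fails. Then, for every $k \in \mathbb{N}$, there exists $v_k \in C_{\bar z}^{1/k}$ with $j''(\bar z)v_k^2 < \tfrac{1}{k}\|v_k\|_{L^2(\Omega)}^2$. Normalizing $w_k := v_k/\|v_k\|_{L^2(\Omega)}$ yields $\|w_k\|_{L^2(\Omega)} = 1$, $w_k \in C_{\bar z}^{1/k}$ (since the defining condition is $0$-homogeneous), and $j''(\bar z)w_k^2 < 1/k$. Extract a (nonrelabeled) subsequence with $w_k \rightharpoonup w$ in $L^2(\Omega)$.

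Next, I would verify that $w \in C_{\bar z}$. The sign condition \eqref{eq:sign_cond} passes to the weak limit because the set of $L^2(\Omega)$ functions satisfying it is convex and closed. For the condition $\bar{\mathfrak{p}}(x) \neq 0 \implies w(x) = 0$, the argument I expect to be the main obstacle, I would introduce the nested sets $A_m := \{x \in \Omega: |\bar{\mathfrak{p}}(x)| > 1/m\}$. Observe that $C_{\bar z}^{1/k} \subset C_{\bar z}^{1/m}$ whenever $k \geq m$, so $w_k|_{A_m} = 0$ for every $k \geq m$; weak convergence then forces $w|_{A_m} = 0$, and letting $m \uparrow \infty$ gives $w(x) = 0$ a.e.~on $\{\bar{\mathfrak{p}} \neq 0\}$. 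Hence $w \in C_{\bar z}$.

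The concluding steps repeat the lower semicontinuity argument developed in Step 2 of the proof of Theorem \ref{thm:suff_opt_cond}: since $s > n/4$, the weak convergence $w_k \rightharpoonup w$ in $L^2(\Omega)$ implies $\phi_{w_k} \rightharpoonup \phi_w$ in $\tilde H^s(\Omega)$ and strongly in $L^{\mathfrak{q}}(\Omega)$ for any $\mathfrak{q} < 2n/(n-2s)$, and combining this with the weak lower semicontinuity of $\|\cdot\|_{L^2(\Omega)}^2$ applied to the three terms of \eqref{eq:charac_j2} yields
\begin{equation*}
j''(\bar z)w^2 \leq \liminf_{k \uparrow \infty} j''(\bar z)w_k^2 \leq 0.
\end{equation*}
Since $w \in C_{\bar z}$, hypothesis \eqref{eq:second_order_suff} forces $w = 0$, and therefore $\phi_{w_k} \to 0$ strongly in $L^{\mathfrak{q}}(\Omega)$. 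Rewriting \eqref{eq:charac_j2} as
\begin{equation*}
\alpha = \alpha \|w_k\|_{L^2(\Omega)}^2 = j''(\bar z)w_k^2 - \int_{\Omega}\left(\tfrac{\partial^2 L}{\partial u^2}(x,\bar u) - \bar p\,\tfrac{\partial^2 a}{\partial u^2}(x,\bar u)\right)\phi_{w_k}^2\,\mathrm{d}x
\end{equation*}
and passing to the limit along the subsequence yields $\alpha \leq 0$, contradicting $\alpha > 0$ and completing the proof.
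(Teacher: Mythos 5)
Your proposal is correct and follows the same overall strategy as the paper: contradiction, normalization, extraction of a weak limit $w$, verification that $w \in C_{\bar z}$, the lower semicontinuity argument of Step 2 of Theorem \ref{thm:suff_opt_cond} to force $w = 0$, and the final $\alpha \leq 0$ contradiction. The one place where you genuinely diverge is the verification that $\bar{\mathfrak{p}}(x) \neq 0 \implies w(x) = 0$: the paper integrates $\bar{\mathfrak{p}}\, w_{\tau}$ over $\{|\bar{\mathfrak{p}}| \leq \tau\}$, bounds it by $\tau \sqrt{|\Omega|}$ via Cauchy--Schwarz, and then exploits the sign information $\bar{\mathfrak{p}}(x)v(x) \geq 0$ coming from \eqref{eq:derivative_j} and \eqref{eq:sign_cond} to upgrade $\int_{\Omega}\bar{\mathfrak{p}}w = 0$ to $\int_{\Omega}|\bar{\mathfrak{p}}w| = 0$; you instead use the monotonicity $C_{\bar z}^{1/k} \subset C_{\bar z}^{1/m}$ for $k \geq m$, so that every $w_k$ with $k \geq m$ vanishes identically on $A_m = \{|\bar{\mathfrak{p}}| > 1/m\}$, and weak convergence immediately gives $w|_{A_m} = 0$. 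Your variant is slightly cleaner in that it is purely measure-theoretic and does not invoke the first-order variational inequality at that point; the paper's variant has the merit of reusing the same $\int \bar{\mathfrak{p}} v$ machinery already deployed in Theorem \ref{thm:suff_opt_cond}. Both are valid; no gaps.
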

\begin{proof}
Since $C_{\bar z} \subset C_{\bar z}^{\tau}$, we immediately conclude that \eqref{eq:second_order_equivalent} implies \eqref{eq:second_order_suff}.  To prove that \eqref{eq:second_order_suff} implies \eqref{eq:second_order_equivalent} we proceed by contradiction. Assume that, for $\tau >0$,
\[
\exists v_{\tau} \in C_{\bar z}^{\tau}:
\quad
j''(\bar z) v_{\tau}^2 < \tau \| v_{\tau}\|_{L^2(\Omega)}^2.
\]
Define $w_{\tau} := \|v_\tau \|_{L^2(\Omega)}^{-1}v_{\tau} $. Note that, up to a nonrelabeled subsequence if necessary,
\begin{equation}
\label{eq:weak_seq}
w_{\tau} \in C_{\bar z}^{\tau},
\qquad
\|w_{\tau}\|_{L^2(\Omega)}=1, \qquad j''(\bar{z})w_{\tau}^2<\tau, \qquad w_{\tau} \rightharpoonup w  \text{ in } L^2(\Omega).
\end{equation}

We prove that $w \in C_{\bar z}$. Since the set of elements satisfying \eqref{eq:sign_cond} is weakly closed in $L^2(\Omega)$, we conclude that $w$ satisfies \eqref{eq:sign_cond} as well. On the other hand, 
\[
\int_{\Omega} \bar{\mathfrak{p}}(x) w(x) \mathrm{d}x = \lim_{\tau \downarrow 0} \int_{\Omega} \bar{\mathfrak{p}}(x) w_{\tau}(x) \mathrm{d}x = \lim_{\tau \downarrow 0} \int_{|\bar{\mathfrak{p}}(x)| \leq \tau} \bar{\mathfrak{p}}(x) w_{\tau}(x) \mathrm{d}x 
\leq 
\lim_{\tau \downarrow 0} \tau \sqrt{|\Omega|} = 0,
\]
where we have used that $\bar{\mathfrak{p}} \in L^2(\Omega)$, $w_{\tau}\rightharpoonup w$ in $L^2(\Omega)$, $w_{\tau} \in C_{\bar z}^{\tau}$, and $\| w_{\tau}\|_{L^2(\Omega)} = 1$. As a result, $\int_{\Omega} |\mathfrak{p}(x) w(x)| \mathrm{d}x = \int_{\Omega}\mathfrak{p}(x) w(x) \mathrm{d}x = 0$. This implies that if $|\mathfrak{p}(x)| \neq 0$, then $w(x) = 0$ for a.e.~$x \in \Omega$. We can thus conclude that $w \in C_{\bar z}$.

We now prove that $w = 0$. Since $w \in C_{\bar z}$, \eqref{eq:second_order_suff} implies that either $w = 0$ or $j''(\bar z) w^2 >0$. On the other hand, the arguments elaborated in the step 2 of the proof of Theorem \ref{thm:suff_opt_cond} in conjunction with \eqref{eq:weak_seq} yield
$
j''(\bar z) w^2 
\leq 
\liminf_{\tau \downarrow 0} j''(\bar z) w_{\tau}^2
\leq
\limsup_{\tau \downarrow 0} j''(\bar z) w_{\tau}^2
\leq 0.
$
Consequently, $w = 0$ and $\lim_{\tau \downarrow 0} j''(\bar z) w_{\tau}^2 = 0$.

Finally, since $w = 0$ and $w_{\tau} \rightharpoonup 0$ in $L^2(\Omega)$ as $\tau \downarrow 0$, we have that $\phi_{w_{\tau}} \rightarrow 0$ in $L^{\mathfrak{q}}(\Omega)$, as $\tau \downarrow 0$, for $\mathfrak{q} < 2n/(n-2s)$. Thus,
$
\alpha = \alpha \| w_{\tau} \|_{L^2(\Omega)}^2 \leq  \liminf_{\tau \downarrow 0} j''(\bar z) w_{\tau}^2 = 0,
$
which is a contradiction. This concludes the proof.
\end{proof}

\subsection{Regularity estimates}
\label{sec:regularity}
In this section, we derive regularity estimates for the optimal control variables. To accomplish this task, the following regularity result for the linear case $a \equiv 0$ will be of importance.

\begin{proposition}[Sobolev regularity on smooth domains]
Let $n \geq 1$, $s \in (0,1)$, and $\Omega$ be a domain such that $\partial \Omega \in C^{\infty}$. Let $\mathsf{u}$ be the solution to $(-\Delta)^s \mathsf{u} = \mathsf{f}$ in $\Omega$ and $\mathsf{u} = 0$ in $\Omega^c$. If $\mathsf{f} \in H^{t}(\Omega)$, for some $t \geq -s$, then $\mathsf{u} \in H^{s + \vartheta}(\Omega)$, where $\vartheta = \min \{s+t,1/2-\epsilon \}$ and $\epsilon > 0$ is arbitrarily small. In addition, we have 
\begin{equation}
\| \mathsf{u} \|_{H^{s+\vartheta}(\Omega)} \lesssim \| \mathsf{f} \|_{H^{t}(\Omega)},
\label{eq:regularity_state_smooth}
 \end{equation}
where the hidden constant depends on $\Omega$, $n$, $s$, and $\vartheta$.
\label{pro:state_regularity_smooth}
\end{proposition}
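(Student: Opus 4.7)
The plan is to invoke Grubb's sharp pseudodifferential regularity theory for the integral fractional Laplacian on $C^\infty$ domains. The statement is essentially a direct consequence of that theory combined with the standard energy estimate; the $\min$ in the definition of $\vartheta$ and the $\epsilon$ loss both reflect the unavoidable boundary profile $\mathsf{u} \sim d(x)^s$, where $d$ denotes the distance to $\partial\Omega$.

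First, I would dispose of the base case $t = -s$, for which $\vartheta = 0$: the conclusion $\mathsf{u} \in \tilde H^s(\Omega)$ with $\|\mathsf{u}\|_{H^s(\Omega)} \lesssim \|\mathsf{f}\|_{H^{-s}(\Omega)}$ is the Lax--Milgram estimate for the coercive, continuous bilinear form $\mathcal{A}$ on $\tilde H^s(\Omega) \times \tilde H^s(\Omega)$. For $-s < t \leq 1/2 - s - \epsilon$, I would appeal to Grubb's lifting theorem, which asserts that $(-\Delta)^s$ induces an isomorphism from $\tilde H^{s+\sigma}(\Omega)$ onto $H^{\sigma - s}(\Omega)$ whenever $|\sigma - s| < 1/2$ and $\partial \Omega \in C^\infty$. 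Setting $\sigma = s + t$ places the exponent in the admissible range and yields $\mathsf{u} \in H^{s + (s+t)}(\Omega)$ together with the continuity bound. The remaining case $t > 1/2 - s - \epsilon$ follows by monotonicity of the Sobolev scale: since $H^{t}(\Omega) \hookrightarrow H^{1/2 - s - \epsilon}(\Omega)$, applying the previous step with $t$ replaced by $1/2 - s - \epsilon$ produces $\mathsf{u} \in H^{s + 1/2 - \epsilon}(\Omega)$ with the hidden constant depending on $\epsilon$, as claimed.

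The main obstacle is not any one estimate but the fundamental rigidity imposed by the boundary behavior $d^s$: a direct calculation shows $d^s \in H^{s + \alpha}(\Omega)$ precisely when $\alpha < 1/2$, which is exactly what forces the sharp cutoff and the $\epsilon$ loss in $\vartheta$. A self-contained proof could in principle be assembled from the local Hölder estimates of Ros-Oton--Serra together with weighted Sobolev analysis, but the pseudodifferential route packages all of these technicalities into a single statement about the mapping properties of $(-\Delta)^s$ on smooth domains, which is why I would prefer it here. The smoothness hypothesis on $\partial \Omega$ is essential: polygonal corners would introduce additional singularities that degrade $\vartheta$ further.
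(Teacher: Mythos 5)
Your proposal is correct in substance and follows exactly the route the paper takes: the paper's ``proof'' of Proposition \ref{pro:state_regularity_smooth} consists solely of a citation to Grubb's $\mu$-transmission/pseudodifferential theory for the integral fractional Laplacian on $C^\infty$ domains (together with H\"ormander), which is precisely the machinery you invoke, with the Lax--Milgram estimate covering $t=-s$ and Sobolev monotonicity covering the capped regime $s+t\geq 1/2$. One small caveat: the admissibility condition in Grubb's lifting theorem concerns the solution regularity, namely $s+\sigma < s+\tfrac12$ (i.e.\ $s+t<\tfrac12$ with your choice $\sigma=s+t$), not $|\sigma-s|<\tfrac12$ (i.e.\ $|t|<\tfrac12$) as you state --- your version can fail when $s>\tfrac12$ and $t$ is close to $-s$, even though the correct condition does hold throughout the regime in which you apply the theorem, so the argument goes through once the hypothesis is stated accurately.
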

\begin{proof}
See \cite{MR3276603,MR0185273}.
\end{proof}

Observe that smoothness of $\mathsf{f}$ does not ensure that the solution to $(-\Delta)^s \mathsf{u} = \mathsf{f}$ in $\Omega$ and $\mathsf{u} = 0$ in $\Omega^c$ is any smoother than $\cap \{ H^{s+1/2-\epsilon}(\Omega): \epsilon > 0 \}$.

To present regularity estimates, we will assume that, in addition to \ref{A1}--\ref{A3} and \ref{B1}--\ref{B2}, the nonlinear functions $a$ and $L$ satisfy \ref{C1}--\ref{C2}.

\begin{theorem}[regularity estimates: $s \in (0,1)$]
Let $n \geq 2$ and $s \in (0,1)$. If $\Omega$ is such that $\partial \Omega \in C^{\infty}$, then
$\bar u, \bar p, \bar{z} \in H^{s + 1/2 - \epsilon}(\Omega)$, where $\epsilon$ denotes an arbitrarily small positive constant.
\label{thm:regularity_space}
\end{theorem}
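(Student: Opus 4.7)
The plan is to bootstrap the Sobolev regularity of $\bar u$, $\bar p$, and $\bar z$ by iterating three ingredients: the linear shift estimate of Proposition \ref{pro:state_regularity_smooth} applied to the state and adjoint equations, Nemytskii-type estimates on the nonlinear compositions relying on \ref{A3}, \ref{B2}, \ref{C1}, and \ref{C2}, and the projection formula \eqref{eq:projection_control}, which transfers regularity from $\bar p$ to $\bar z$ because $\Pi_{[\mathfrak{a},\mathfrak{b}]}$ is globally $1$-Lipschitz and therefore preserves $H^\sigma(\Omega)$ for $\sigma \leq 1$.

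Starting from $\bar u, \bar p \in \tilde H^s(\Omega) \cap L^\infty(\Omega)$ (Theorems \ref{thm:stata_equation_well_posedness} and \ref{thm:properties_C_to_S}) and $\bar z \in L^\infty(\Omega)$, I would fix $\mathfrak{m}>0$ bounding $\bar u$ and $\bar p$ in $L^\infty(\Omega)$ and rewrite the equations as $(-\Delta)^s\bar u = \bar z - a(\cdot, \bar u)$ and $(-\Delta)^s \bar p = \tfrac{\partial L}{\partial u}(\cdot,\bar u) - \tfrac{\partial a}{\partial u}(\cdot, \bar u)\bar p$. Using \ref{A3} and \ref{C1}, one controls $|a(x,\bar u(x))| \leq |a(x,0)| + C_\mathfrak{m}|\bar u(x)|$, and analogously for the adjoint side via \ref{B2} and \ref{C2}; both right-hand sides therefore belong to $L^2(\Omega)$. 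Proposition \ref{pro:state_regularity_smooth} with $t=0$ then gives $\bar u, \bar p \in H^{s + \min\{s, 1/2-\epsilon\}}(\Omega)$, and the projection formula transfers this regularity to $\bar z$. When $s \geq 1/2$ this already yields the claim.

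For $s < 1/2$, the bootstrap continues. Assuming inductively $\bar u, \bar p, \bar z \in H^\sigma(\Omega)$ for some $\sigma < s + 1/2 - \epsilon$, a Nemytskii argument produces $a(\cdot, \bar u), \tfrac{\partial a}{\partial u}(\cdot, \bar u)\bar p \in H^{\min\{\sigma, 1/2-s-\epsilon\}}(\Omega)$, where the spatial contribution comes from the regularity of $a(\cdot, 0)$ and $\tfrac{\partial a}{\partial u}(\cdot, 0)$ supplied by \ref{C1} and the contribution from $\bar u$ is controlled by \ref{A3}; similarly, \ref{C2} and \ref{B2} give $\tfrac{\partial L}{\partial u}(\cdot,\bar u) \in H^{1/2-s-\epsilon}(\Omega)$. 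Re-applying Proposition \ref{pro:state_regularity_smooth} increases the regularity index $\vartheta$ by at least $s$ at each pass until it saturates at the cap $\vartheta = 1/2 - \epsilon$, yielding $\bar u, \bar p \in H^{s+1/2-\epsilon}(\Omega)$ after finitely many iterations; a final invocation of \eqref{eq:projection_control} transports the same regularity to $\bar z$.

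The most delicate step will be verifying that the nonlinear compositions preserve Sobolev regularity up to the cap $1/2 - s - \epsilon$. I would proceed via the splitting
\[
a(x, \bar u(x)) - a(y, \bar u(y)) = [a(x, \bar u(x)) - a(x, \bar u(y))] + [a(x, \bar u(y)) - a(y, \bar u(y))],
\]
estimating the first term pointwise by $C_\mathfrak{m}|\bar u(x) - \bar u(y)|$ (using \ref{A3}) and rewriting $a(x,\bar u(y)) = a(x,0) + \int_0^{\bar u(y)}\tfrac{\partial a}{\partial u}(x, r)\,dr$ so that the second term is controlled by the $x$-regularity of $a(\cdot,0)$ and $\tfrac{\partial a}{\partial u}(\cdot,0)$ supplied by \ref{C1}; a Sobolev--Slobodeckij computation then delivers the required $H^\sigma$ bound. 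Analogous manipulations take care of $\tfrac{\partial a}{\partial u}(\cdot, \bar u)\bar p$ and $\tfrac{\partial L}{\partial u}(\cdot, \bar u)$, and the preservation of $H^\sigma(\Omega)$ ($\sigma \leq 1$) under $\Pi_{[\mathfrak{a},\mathfrak{b}]}$ closes the argument for $\bar z$.
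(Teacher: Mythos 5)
Your proposal follows essentially the same route as the paper: a first application of Proposition \ref{pro:state_regularity_smooth} with $t=0$ to the state and adjoint equations (using that the right-hand sides lie in $L^2(\Omega)$ by \ref{A3}, \ref{C1}, \ref{B2}, \ref{C2}), a transfer of regularity to $\bar z$ through the projection formula \eqref{eq:projection_control}, and, for $s<1/2$, a bootstrap in which the Slobodeckij splitting
\[
a(x,\bar u(x)) - a(y,\bar u(y)) = [a(x,\bar u(x)) - a(x,\bar u(y))] + [a(x,\bar u(y)) - a(y,\bar u(y))]
\]
controls the composed nonlinearities up to the cap $H^{1/2-s-\epsilon}(\Omega)$ and each pass through Proposition \ref{pro:state_regularity_smooth} gains at least $s$ derivatives. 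This is precisely the structure of the paper's argument (which organizes the iteration into the explicit sub-cases $s\in[\tfrac14,\tfrac12)$, $[\tfrac16,\tfrac14)$, etc., gaining $2s$ per step), so on that score the proposal is sound.

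There is, however, one concrete gap: your justification for the regularity of $\bar z$ rests on the claim that $\Pi_{[\mathfrak{a},\mathfrak{b}]}$, being $1$-Lipschitz, preserves $H^{\sigma}(\Omega)$ \emph{for $\sigma\leq 1$}. That argument is fine throughout the bootstrap when $s<1/2$ (where all exponents stay below $1$), but it does not deliver the final claim when $s>1/2$: there the target is $\sigma = s+\tfrac12-\epsilon>1$, and pointwise Lipschitz composition gives no control of an $H^{\sigma}$ norm with $\sigma>1$, so your argument caps $\bar z$ at $H^{1}(\Omega)$. The paper closes this step by invoking \cite[Theorem 1]{MR1173747}, i.e., the nontrivial fact that the truncation operator is bounded on $H^{\sigma}$ for $\sigma<3/2$ (and, in the low-order cases, a nonlinear interpolation argument based on \cite[Theorem A.1]{MR1786735} and \cite[Lemma 28.1]{Tartar}). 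You need such a result, or an equivalent one, to obtain $\bar z\in H^{s+1/2-\epsilon}(\Omega)$ for $s>1/2$.
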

\begin{proof}
Since $\bar z \in \mathbb{Z}_{ad}$ and $a(\cdot,0) \in L^2(\Omega)$, we apply Proposition \ref{pro:state_regularity_smooth} with $t = 0$ to obtain $\bar u \in H^{s + \nu}(\Omega)$, where $\nu = \min \{ s , 1/2 - \epsilon \}$ and $\epsilon >0$ is arbitrarily small, and  
\begin{equation}
\| \bar u \|_{H^{s + \nu}(\Omega)} \lesssim \| \bar z - a(\cdot, \bar u)\|_{L^2(\Omega)} \lesssim \| \bar z \|_{L^{2}(\Omega)} + \| a(\cdot,0) \|_{L^2(\Omega)},
\label{eq:first_estimate_u}
\end{equation}
upon utilizing that $a$ is locally Lipschitz in the second variable and $\| \bar u \|_s \lesssim \| \bar z \|_{H^{-s}(\Omega)}$. We now obtain a first regularity estimate for $\bar p$. To accomplish this task, we invoke Proposition \ref{pro:state_regularity_smooth} to obtain $\bar p \in H^{s + \iota}(\Omega)$, where $\iota = \min \{ s + \lambda ,  \frac{1}{2} - \epsilon \}$, $\lambda = \min \{ 0, \frac{1}{2} - s - \epsilon \}$, and $\epsilon >0$ is arbitrarily small. In addition, we have the following estimate:
\begin{multline}
\| \bar p  \|_{H^{s + \iota}(\Omega)} \lesssim \left \| \tfrac{\partial a }{\partial u} (\cdot,\bar u) \bar p \right \|_{L^{2}(\Omega)} + \left \| \tfrac{\partial L}{\partial u}(\cdot,\bar u) \right \|_{H^{\frac{1}{2} - s - \epsilon}(\Omega)}
\\
\lesssim  \| \bar p \|_{L^{\infty}(\Omega)}
\left \| \tfrac{\partial a }{\partial u} (\cdot,\bar u) \right \|_{L^2(\Omega)}
 + \left \|  \tfrac{\partial L}{\partial u} (\cdot,\bar u) \right \|_{H^{\frac{1}{2} - s - \epsilon}(\Omega)}.
\label{eq:first_estimate_p}
\end{multline}
In view of \eqref{eq:projection_control}, \cite[Theorem 1]{MR1173747}
yields $\bar z \in H^{s + \iota}(\Omega)$ with a similar estimate. 

We now consider three cases.

\noindent 
\boxed{1} $s \in ( \frac{1}{2}, 1 )$: Observe that $\nu = \iota = \tfrac{1}{2} - \epsilon$. Thus, $\bar u , \bar p, \bar z  \in H^{s + \frac{1}{2} - \epsilon}(\Omega)$ for $\epsilon >0$ being arbitrarily small.
In addition, the estimates \eqref{eq:first_estimate_u} and \eqref{eq:first_estimate_p} yield
\begin{multline*}
\| \bar u \|_{H^{s+\frac12 - \epsilon}(\Omega)} 
+
\| \bar p \|_{H^{s+\frac12 - \epsilon}(\Omega)} 
+
\| \bar z \|_{H^{s + \frac{1}{2} - \epsilon}(\Omega)}
\lesssim 
 \| \bar p \|_{L^{\infty}(\Omega)}
\left \| \tfrac{\partial a }{\partial u} (\cdot,\bar u) \right \|_{L^2(\Omega)}
\\
+
\| \bar z \|_{L^{2}(\Omega)}
+
\|  \tfrac{\partial L}{\partial u} (\cdot,\bar u) \|_{H^{\frac{1}{2} - s - \epsilon}(\Omega)}
+ \| a(\cdot,0) \|_{L^2(\Omega)}
=: \mathfrak{B}.
\end{multline*}

\noindent  
\boxed{2} $s = \tfrac{1}{2}$. The proof follows similar arguments.  For brevity, we skip the details.

\noindent  
\boxed{3} $s \in (0,\tfrac{1}{2})$. Here, $\nu = \iota = s$. Thus, $\bar u, \bar p \in H^{2s}(\Omega)$. In view of \eqref{eq:projection_control}, a nonlinear interpolation result based on \cite[Theorem A.1]{MR1786735} and \cite[Lemma 28.1]{Tartar} yields $\bar z \in H^{2s}(\Omega)$. In addition, we have the estimate
\begin{equation*}
\| \bar u \|_{H^{2s}(\Omega)} 
+
\| \bar p \|_{H^{2s}(\Omega)} 
+
\| \bar z \|_{H^{2s}(\Omega)}
\lesssim 
\mathfrak{B}.
\end{equation*}

In what follows, we proceed on the basis of a bootstrap argument as in \cite{MR3429730,MR3990191}.

\noindent  
\boxed{3.1} $s \in [\tfrac{1}{4},\tfrac{1}{2})$. Invoke Proposition \ref{pro:state_regularity_smooth} with $t = 1/2 - s - \epsilon$ to obtain $\bar u \in H^{s+1/2-\epsilon}(\Omega)$, where $\epsilon >0$ is arbitrarily small, and the estimate
\begin{multline*}
\| \bar u \|_{H^{s +  \frac{1}{2} - \epsilon}(\Omega)} 
\lesssim
\| \bar z \|_{H^{\frac{1}{2} -s-\epsilon}(\Omega)}
+
\| a(\cdot,\bar u)  - a(\cdot,0)  \|_{H^{\frac{1}{2} -s-\epsilon}(\Omega)}
+  
\| a(\cdot,0)  \|_{H^{\frac{1}{2} -s-\epsilon}(\Omega)}
\\
\lesssim 
 \| \bar z \|_{H^{2s}(\Omega)} 
 + 
 \| \bar u \|_{H^{2s}(\Omega)}
+
\| a(\cdot,0) \|_{H^{\frac{1}{2} -s-\epsilon}(\Omega)} 
\lesssim \mathfrak{B} + 
\| a(\cdot,0) \|_{H^{\frac{1}{2} -s-\epsilon}(\Omega)}.
\end{multline*}
Observe that $2s > \tfrac{1}{2} - s - \epsilon$ for $\epsilon >0$ being arbitrarily small. On the other hand, in view of assumption \ref{C1}, we have that $\frac{\partial a}{\partial u}(\cdot,\bar u) \in H^s(\Omega)$. Since $\bar p \in H^s(\Omega)$, it thus follows that $\tfrac{\partial a}{\partial u}(\cdot,\bar u) \bar p \in H^s(\Omega)$. In fact, notice that, for $x,y \in \Omega$, we have the estimate
\begin{multline*}
\left|
\tfrac{\partial a }{\partial u} (x,\bar u(x)) \bar p(x) - \tfrac{\partial a }{\partial u} (y,\bar u(y)) \bar p(y)
\right| 
\\ 
\leq
\left| \tfrac{\partial a }{\partial u} (x,\bar u(x)) \right | |\bar p(x) - \bar  p(y)| + |\bar p(y)| \left| \tfrac{\partial a }{\partial u} (x,\bar u(x))  -\tfrac{\partial a }{\partial u} (y,\bar u(y)) \right |.
\end{multline*}
The definition of $| \cdot |_{H^s(\Omega)}$ implies $| \tfrac{\partial a }{\partial u} (\cdot,\bar u) \bar p  |_{H^s(\Omega)} \lesssim  |\bar p|_{H^s(\Omega)} + \| \bar p \|_{L^{\infty}(\Omega)} 
| \tfrac{\partial a }{\partial u} (\cdot,\bar u) |_{H^s(\Omega)},
$
upon utilizing \ref{A3}. We thus invoke Proposition \ref{pro:state_regularity_smooth} with $t = 1/2-s-\epsilon$ to obtain
\[
\| \bar p \|_{H^{s + \frac{1}{2}  -\epsilon}(\Omega)} \lesssim  \|\bar p \|_{H^{s}(\Omega)} + \| \bar p \|_{L^{\infty}(\Omega)} \left \| \tfrac{\partial a}{\partial u} (\cdot,\bar u) \right\|_{H^{s}(\Omega)} + \left \|  \tfrac{\partial L}{\partial u} (\cdot,\bar u) \right \|_{H^{ \frac{1}{2} - s - \epsilon}(\Omega)}.
\]
A nonlinear interpolation argument yields $\bar z \in H^{s + \frac{1}{2}  - \epsilon}(\Omega)$ with a similar estimate.

\noindent  
\boxed{3.2} $s \in [\frac{1}{6},\tfrac{1}{4})$. 
Proposition \ref{pro:state_regularity_smooth} with $t = 1/2 - s - \epsilon$ yields $\bar u \in H^{s+1/2 - \epsilon}(\Omega)$ for $\epsilon >0$ being arbitrarily small. On the other hand, \ref{C1} guarantees that $\tfrac{\partial a }{\partial u} (\cdot,\bar u) \in H^{2s}(\Omega)$. Since $\bar p \in H^{2s}(\Omega)$, we conclude that $\tfrac{\partial a }{\partial u} (\cdot,\bar u) \bar p \in H^{2s}(\Omega)$. Observe that $2s> \frac{1}{2} - s - \epsilon$ and invoke Proposition \ref{pro:state_regularity_smooth} with $t = 1/2-s-\epsilon$ to obtain the estimate
\[
\| \bar p \|_{H^{s + \frac{1}{2}-\epsilon }(\Omega)} \lesssim  \|\bar p \|_{H^{2s}(\Omega)} + \| \bar p \|_{L^{\infty}(\Omega)} \left \| \tfrac{\partial a}{\partial u} (\cdot,\bar u) \right\|_{H^{2s}(\Omega)} + \left \|  \tfrac{\partial L}{\partial u} (\cdot,\bar u) \right \|_{H^{ \frac{1}{2} - s - \epsilon}(\Omega)}.
\]
This implies that $\bar z \in H^{s + \frac{1}{2} - \epsilon}(\Omega)$ with a similar estimate; $\epsilon >0$ is arbitrarily small.

\noindent  
\boxed{3.3} $s \in (0,\tfrac{1}{6})$. Invoke Proposition \ref{pro:state_regularity_smooth} with $t = 2s$ to obtain $\bar u \in H^{4s}(\Omega)$ and
\begin{multline*}
\| \bar u \|_{H^{4s}(\Omega)} 
\lesssim
 \| \bar z \|_{H^{2s}(\Omega)} 
 + 
 \| \bar u \|_{H^{2s}(\Omega)}
+
\| a(\cdot,0) \|_{H^{\frac{1}{2} -s-\epsilon}(\Omega)} 
\lesssim \mathfrak{B} + 
\| a(\cdot,0) \|_{H^{\frac{1}{2} -s-\epsilon}(\Omega)}.
\end{multline*}
On the other hand,  $\tfrac{\partial a }{\partial u} (\cdot,\bar u) \in H^{3s}(\Omega)$. Since $\bar p \in H^{2s}(\Omega)$, we can thus conclude that $\tfrac{\partial a }{\partial u} (\cdot,\bar u) \bar p \in H^{2s}(\Omega)$. Invoke Proposition \ref{pro:state_regularity_smooth} with $t = 2s$ to obtain
\[
\| \bar p \|_{H^{4s }(\Omega)} \lesssim  \|\bar p \|_{H^{2s}(\Omega)} + \| \bar p \|_{L^{\infty}(\Omega)} \left \| \tfrac{\partial a}{\partial u} (\cdot,\bar u) \right\|_{H^{2s}(\Omega)} + \left \|  \tfrac{\partial L}{\partial u} (\cdot,\bar u) \right \|_{H^{ \frac{1}{2} - s - \epsilon}(\Omega)} =: \mathfrak{C}.
\]
A nonlinear interpolation argument yields $\bar z \in H^{4s}(\Omega)$ with a similar estimate.

\noindent  
\boxed{3.3.1} $s \in [\tfrac{1}{10},\tfrac{1}{6})$.  Observe that $4s>\frac{1}{2} -s-\epsilon$ for $\epsilon >0$ being arbitrarily small. Invoke Proposition \ref{pro:state_regularity_smooth} with $t = \frac{1}{2} - s - \epsilon$ to obtain $\bar u \in H^{s+ \frac{1}{2} - \epsilon}(\Omega)$ with 
\[
\| \bar u \|_{H^{s + \frac{1}{2} - \epsilon}(\Omega)} 
\lesssim
\| \bar z\|_{H^{4s}(\Omega)}
+
\| \bar u \|_{H^{4s}(\Omega)}
+
\| a(\cdot,0) \|_{H^{\frac{1}{2} -s-\epsilon}(\Omega)}
\lesssim 
\mathfrak{C}
+
\| a(\cdot,0) \|_{H^{\frac{1}{2} -s-\epsilon}(\Omega)}.
\]
Invoke Proposition \ref{pro:state_regularity_smooth} again to deduce that  $\bar p, \bar z \in H^{s+ \frac{1}{2} - \epsilon}(\Omega)$.

\noindent  
\boxed{3.3.2} $s \in (0,\tfrac{1}{10})$.  Invoke Proposition \ref{pro:state_regularity_smooth} with $t = 4s$ to obtain $\bar u \in H^{6s}(\Omega)$. Since $\bar p,\tfrac{\partial a }{\partial u} (\cdot,\bar u) \in H^{4s}(\Omega)$, an application of Proposition \ref{pro:state_regularity_smooth} yields $\bar p, \bar z \in H^{6s}(\Omega)$.

\noindent  
\boxed{3.3.2.1} $s \in [\tfrac{1}{14},\tfrac{1}{10})$. Observe that $6s>\frac{1}{2} -s-\epsilon$ for $\epsilon >0$ being arbitrarily small. Invoke Proposition \ref{pro:state_regularity_smooth} with $t = \frac{1}{2} - s - \epsilon$ to obtain $\bar u,\bar p, \bar z \in H^{s+ \frac{1}{2} - \epsilon}(\Omega)$.

\noindent  
\boxed{3.3.2.2} $s \in (0,\tfrac{1}{14})$.  Invoke Proposition \ref{pro:state_regularity_smooth} with $t = 6s$ to obtain $\bar u \in H^{8s}(\Omega)$. Since $\bar p,\tfrac{\partial a }{\partial u} (\cdot,\bar u) \in H^{6s}(\Omega)$, Proposition \ref{pro:state_regularity_smooth} also yields $\bar p, \bar z \in H^{8s}(\Omega)$.

From this procedure we note that, at every step, there is a regularity gain.  Consequently, after a finite number of steps, which is proportional to $s^{-1}$, we can conclude that the desired regularity results hold. This concludes the proof.
\end{proof}

\section{Finite element approximation of fractional semilinear PDEs}
\label{sec:fem}
In this section, we analyze the convergence properties of suitable finite element discretizations and derive, when possible, a priori error estimates. For analyzing convergence properties, it will be sufficient to assume that $\Omega$ is an open and bounded Lipschitz polytope. However, additional assumptions on $\Omega$ will be imposed for deriving error estimates: $\Omega$ is smooth and convex; convexity being assumed for simplicity. Since in this case $\Omega$ cannot be meshed exactly, we consider curved simplices to discretize $\Omega \setminus \Omega_h$; $\Omega_h$ being a suitable polytopal domain that \emph{approximates} $\Omega$.

For the sake of brevity, we restrict the presentation to open and bounded domains $\Omega \subset \mathbb{R}^n$ $(n \geq 2)$ such that $\partial \Omega \in C^2$; for Lipschitz polytopes the presentation is simpler (see Remark \ref{rem:polytopes}). We follow \cite[Section 5.2]{MR773854} and consider a family of open, bounded, and convex polytopal domains $\{ \Omega_h \}_{h>0}$, based on a family of quasi-uniform partitions made of closed simplices $\{ \T_h \}_{h>0}$, that approximate $\Omega$ in the following sense:  
\begin{equation}
\mathcal{N}_h \subset \bar \Omega_h,
\quad
\mathcal{N}_h \cap \partial \Omega_h \subset \partial \Omega,
\quad
|\Omega \setminus \Omega_h| \lesssim h^2.
\label{eq:properties_of_Omegah}
\end{equation}
Here, $h = \max_{T \in \T_h} h_T$ denotes the mesh-size of the quasi-uniform partition $\T_h = \{ T \}$, where $h_T = \mathrm{diam}(T)$, and $\mathcal{N}_h$ corresponds to the set of all nodes of the mesh $\T_h$. We shall also assume that $\Omega$ is convex so that $\Omega_h \subset \Omega$ for every $h>0$.

Given a mesh $\T_h$, we define the finite element space of continuous piecewise polynomials of degree one as
\begin{equation}
\V_h = \left\{ v_h \in C^0( \overline {\Omega} ): {v_h}_{|T} \in \mathbb{P}_1(T) \ \forall T \in \T_h, \ v_{h} = 0 \textrm{ on } \overline \Omega \setminus \Omega_h \right\}.
\label{eq:defFESpace}
\end{equation}
Note that discrete functions are trivially extended by zero to $\Omega^c$ and that we enforce a classical homogeneous Dirichlet boundary condition at the degrees of freedom that are located at the boundary of \(\Omega_h\). 

\begin{remark}[polytopes]
\rm
If $\Omega$ is a Lipschitz polytope the previous construction is not necessary:
%
$\Omega = \Omega_h$
and 
$\V_h = \{ v_h \in C^0( \overline{\Omega} ): {v_h}_{|T} \in \mathbb{P}_1(T) \ \forall T \in \T_h  \}$.
\label{rem:polytopes}
\end{remark}

\subsection{The discrete problem}
We introduce the following finite element approximation of problem \eqref{eq:weak_semilinear_pde}: Find $\mathsf{u}_h \in \V_h$ such that
\begin{equation}
 \label{eq:discrete_semilinear_pde}
  \mathcal{A} (\mathsf{u}_h,v_h)  +  \int_{\Omega_h} a(x,\mathsf{u}_h(x)) v_h(x) \mathrm{d}x = \int_{\Omega_h} f(x) v_h(x) \mathrm{d}x \quad \forall v_h \in \V_h.
\end{equation}
Let $r>n/2s$ and $f \in L^r(\Omega)$. Let $a = a(x,u) : \Omega \times \mathbb{R} \rightarrow \mathbb{R}$ be a Carath\'eodory function that is monotone increasing in $u$. Assume, in addition, that $a$ satisfies \eqref{eq:assumption_on_phi_state_equation} and $a(\cdot,0) \in L^r(\Omega)$. Withing this setting, Theorem \ref{thm:stata_equation_well_posedness} guarantees that the continuous problem \eqref{eq:weak_semilinear_pde} admits a unique solution $u \in \tilde H^s(\Omega) \cap L^{\infty}(\Omega)$ satisfying \eqref{eq:stability}. Since $\mathcal{A}$ is coercive and $a$ is monotone increasing in $u$, an appllication of Brouwer's fixed point theorem \cite[Proposition 2.6]{MR816732} yields the existence of a unique solution for \eqref{eq:discrete_semilinear_pde}; see also the proof of \cite[Theorem 26.A]{MR1033498}. In addition, $\| \mathsf{u}_h \|_s \lesssim \| f \|_{H^{-s}(\Omega)}$ for every $h>0$.

\subsection{Regularity estimates}
Before deriving error estimates, it is of fundamental importance the understanding of regularity estimates for the solution of \eqref{eq:weak_semilinear_pde}. 

\begin{theorem}[regularity estimates: $s \in (0,1)$]
\label{thm:regularity_space_state_equation}
Let $n \geq 2$, $s \in (0,1)$, and $\Omega$ be a domain such that $\partial \Omega \in C^{\infty}$. Assume, in addition, that $a$ is locally Lipschitz with respect to the second variable. If both $a(\cdot,0)$ and $f$ belong to $H^{1/2 - s - \epsilon}(\Omega)$, with $\epsilon$ arbitrarily small, then
$u \in H^{s + 1/2 - \epsilon}(\Omega)$. 
\end{theorem}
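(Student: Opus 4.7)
The plan is to adapt the bootstrap strategy developed for Theorem \ref{thm:regularity_space}, applied now directly to the equation $(-\Delta)^s u = f - a(\cdot,u)$ in $\Omega$, $u = 0$ in $\Omega^c$. The basic regularity $u \in \tilde H^s(\Omega) \cap L^\infty(\Omega)$ from Theorem \ref{thm:stata_equation_well_posedness} is the starting point, and the idea is to iteratively use Proposition \ref{pro:state_regularity_smooth} to upgrade the Sobolev regularity of the source $f - a(\cdot,u)$, and hence of $u$, until the ceiling $s + 1/2 - \epsilon$ is reached.

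In the range $s \geq 1/2$, one application of Proposition \ref{pro:state_regularity_smooth} with $t = 0$ settles the matter: the $L^\infty$ bound on $u$ combined with the local Lipschitz hypothesis on $a(x,\cdot)$ and with $a(\cdot,0), f \in L^2(\Omega)$ (a consequence either of $H^{1/2-s-\epsilon}(\Omega) \hookrightarrow L^2(\Omega)$ when $1/2 - s - \epsilon \geq 0$, or of the standing hypotheses of Theorem \ref{thm:stata_equation_well_posedness} otherwise) yields $f - a(\cdot,u) \in L^2(\Omega)$, so $u \in H^{s + \nu}(\Omega)$ with $\nu = \min\{s, 1/2 - \epsilon\} = 1/2 - \epsilon$. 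For $s < 1/2$ I would imitate the staircase argument that appears in the sub-cases 3.1--3.3.2.2 of the proof of Theorem \ref{thm:regularity_space}. Starting from $u \in H^{2s}(\Omega) \cap L^\infty(\Omega)$ (obtained by one application of Proposition \ref{pro:state_regularity_smooth} with $t = 0$), the inductive step assumes $u \in H^\sigma(\Omega) \cap L^\infty(\Omega)$ and uses a composition estimate of the form
\begin{equation*}
\| a(\cdot,u) - a(\cdot,0) \|_{H^\sigma(\Omega)} \lesssim \| u \|_{H^\sigma(\Omega)}, \quad \sigma \in (0,1),
\end{equation*}
together with $a(\cdot,0), f \in H^{1/2 - s - \epsilon}(\Omega)$, to place the source in $H^{\min\{\sigma,\, 1/2 - s - \epsilon\}}(\Omega)$. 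A new application of Proposition \ref{pro:state_regularity_smooth} then adds roughly $s$ to the exponent of $u$, so after $O(1/s)$ passes the source reaches $H^{1/2 - s - \epsilon}(\Omega)$ and a final application with $t = 1/2 - s - \epsilon$ delivers $u \in H^{s + 1/2 - \epsilon}(\Omega)$.

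The principal technical obstacle is the composition estimate used inside the loop. I would justify it through the Gagliardo seminorm by splitting
\begin{equation*}
a(x,u(x)) - a(y,u(y)) = [a(x,u(x)) - a(x,u(y))] + [a(x,u(y)) - a(y,u(y))];
\end{equation*}
the first bracket is bounded by the local Lipschitz hypothesis in terms of $|u(x) - u(y)|$ and produces the $|u|_{H^\sigma(\Omega)}$ contribution, while the second is an $x$-increment of $a$ at a fixed second argument which, thanks to $u \in L^\infty(\Omega)$, is absorbed into the $H^{1/2 - s - \epsilon}(\Omega)$-regularity of $a(\cdot,0)$. This plays here the role that the explicit bound on $\partial a/\partial u(\cdot,0)$ plays in assumption \ref{C1} during the bootstrap of Theorem \ref{thm:regularity_space}; since only $u$ (and no adjoint state) appears on the nonlinear side of the state equation, the bookkeeping is noticeably simpler than in the optimal-control setting, and no case-by-case refinement in $s$ beyond the initial dichotomy $s \geq 1/2$ versus $s < 1/2$ is required.
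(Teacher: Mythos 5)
Your proposal follows exactly the route the paper intends: its proof of Theorem \ref{thm:regularity_space_state_equation} consists of the single sentence ``the proof follows along the same lines of Theorem \ref{thm:regularity_space},'' and your bootstrap via Proposition \ref{pro:state_regularity_smooth} --- one pass for $s\geq 1/2$, a staircase of $O(1/s)$ passes for $s<1/2$ --- is precisely that argument restricted to the state equation. (A minor bookkeeping point: each pass gains $2s$, not $s$, since $t=2ks$ yields $u\in H^{(2k+2)s}(\Omega)$; the conclusion is unaffected.)

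One step deserves a caveat. In your justification of the composition estimate, the second bracket $a(x,u(y))-a(y,u(y))$ is an $x$-increment of $a$ at a \emph{generically nonzero} value of the second argument, and the hypothesis $a(\cdot,0)\in H^{1/2-s-\epsilon}(\Omega)$ controls the $x$-regularity of $a(\cdot,v)$ only at $v=0$. As stated, it cannot ``absorb'' that term: for $a(x,u)=b(x)u$ with $b\geq 0$ bounded but otherwise rough, one has $a(\cdot,0)=0$ and global Lipschitz continuity in $u$, yet $a(\cdot,v)=b(\cdot)v$ has no positive Sobolev regularity, and the bootstrap stalls at $H^{2s}(\Omega)$. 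Controlling the cross term requires joint regularity of $a$ in $(x,u)$ of the type encoded in \ref{C1} (regularity of $\partial a/\partial u(\cdot,0)$ together with the Lipschitz bounds of \ref{A3} on the $u$-derivatives), which is exactly what the proof of Theorem \ref{thm:regularity_space} invokes when it bounds $|\frac{\partial a}{\partial u}(\cdot,\bar u)\bar p|_{H^s(\Omega)}$. The paper elides the same point in the statement of Theorem \ref{thm:regularity_space_state_equation}, so your proposal is faithful to the source; but if you want the composition estimate to actually close, you should either add the \ref{C1}-type hypothesis explicitly or restrict to nonlinearities of the form $a(x,u)=a_0(x)+\tilde a(u)$.
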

\begin{proof}
The proof follows along the same lines of Theorem \ref{thm:regularity_space}. For brevity, we skip the details.
\end{proof}

\subsection{Error estimates}
We now present error estimates. In doing so, we will assume, in addition, that there exists $\phi \in L^{\mathfrak{r}}(\Omega)$, with $\mathfrak{r} = n/2s$, such that
\begin{equation}
|a(x,u) - a(x,v)| \leq |\phi(x)| |u-v| ~\textrm{a.e.}~x \in \Omega,~u,v \in \mathbb{R}.
\label{eq:assumption_on_a_phi}
\end{equation}

\begin{theorem}[error estimates]
Let $n \geq 2$, $s \in (0,1)$, and $r>n/2s$. Let $\Omega$ be an open and bounded domain with Lipschitz boundary. Assume that $a$ is as in the statement of Theorem \ref{thm:stata_equation_well_posedness}. Assume, in addition, that $a$ satisfies \eqref{eq:assumption_on_a_phi}. Let $u \in \tilde H^s(\Omega)$ be the solution to \eqref{eq:weak_semilinear_pde} and let $\mathsf{u}_h \in \mathbb{V}_h$ be its finite element approximation obtained as the solution to \eqref{eq:discrete_semilinear_pde}. Then, we have the quasi--best approximation result
\begin{equation}
\label{eq:error_estimate_semilinear_s}
\| u - \mathsf{u}_h \|_{s} \lesssim \| u - v_h\|_{s} \quad \forall v_h \in \mathbb{V}_h.
\end{equation}
If, in addition,  $\Omega$ is smooth and convex, $a$ is locally Lipschitz with respect to the second variable, and $a(\cdot,0), f  \in H^{1/2 - s - \epsilon}(\Omega)$, with $\epsilon>0$ arbitrarily small, then 
\begin{equation}
\label{eq:error_estimate_semilinear_s_final}
\| u - \mathsf{u}_h \|_{s} \lesssim h^{\frac{1}{2} - \epsilon } \| u \|_{H^{s + 1/2 - \epsilon}(\Omega)}.
\end{equation} 
If, in addition, \eqref{eq:assumption_on_a_phi} holds with $\mathfrak{r} = n/s$, then
\begin{equation}
\label{eq:error_estimate_semilinear_s_final_L2}
\| u - \mathsf{u}_h \|_{L^2(\Omega)} \lesssim h^{\vartheta + \frac{1}{2} - \epsilon} \| u \|_{H^{s + 1/2 - \epsilon}(\Omega)}.
\end{equation} 
Here, $\vartheta = \min \{ s , \tfrac{1}{2} - \epsilon \}$ with $\epsilon >0$ being arbitrarily small. In all three estimates the hidden constant is independent of $u$, $\mathsf{u}_h$, and $h$.
\label{thm:error_estimates_state_equation}
\end{theorem}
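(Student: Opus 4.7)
The plan is to address each of the three estimates in turn, using Theorem \ref{thm:regularity_space_state_equation} and Proposition \ref{pro:state_regularity_smooth} for the regularity-dependent rates.

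For the quasi--best approximation \eqref{eq:error_estimate_semilinear_s}, I would first derive the Galerkin orthogonality
$\mathcal{A}(u - \mathsf{u}_h, v_h) + (a(\cdot,u) - a(\cdot,\mathsf{u}_h), v_h)_{L^2(\Omega)} = 0$ for every $v_h \in \V_h$
by subtracting \eqref{eq:discrete_semilinear_pde} from \eqref{eq:weak_semilinear_pde} and observing that $v_h = 0$ on $\Omega \setminus \Omega_h$ absorbs both the forcing and the nonlinearity mismatches. Writing $u - \mathsf{u}_h = (u - v_h) - (\mathsf{u}_h - v_h)$, testing with $\mathsf{u}_h - v_h$, and dropping the nonnegative term $(a(\cdot,u) - a(\cdot,\mathsf{u}_h), u - \mathsf{u}_h)_{L^2(\Omega)} \geq 0$ afforded by monotonicity of $a$, one arrives at
$\|u - \mathsf{u}_h\|_s^2 \leq \mathcal{A}(u - \mathsf{u}_h, u - v_h) + (a(\cdot,u) - a(\cdot,\mathsf{u}_h), u - v_h)_{L^2(\Omega)}$.
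The second term is controlled by \eqref{eq:assumption_on_a_phi}, H\"older's inequality with exponents $(n/2s,\, 2n/(n-2s),\, 2n/(n-2s))$, and the continuous embedding $\tilde H^s(\Omega) \hookrightarrow L^{2n/(n-2s)}(\Omega)$; factoring out $\|u-\mathsf{u}_h\|_s$ then yields \eqref{eq:error_estimate_semilinear_s}.

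For \eqref{eq:error_estimate_semilinear_s_final}, Theorem \ref{thm:regularity_space_state_equation} provides $u \in H^{s + 1/2 - \epsilon}(\Omega)$. Inserting the Scott--Zhang interpolant $v_h = \Pi_h u$ into \eqref{eq:error_estimate_semilinear_s} and applying the standard quasi--uniform interpolation estimate $\|u - \Pi_h u\|_s \lesssim h^{1/2 - \epsilon}\|u\|_{H^{s + 1/2 - \epsilon}(\Omega)}$ closes this step.

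For the $L^2$ bound \eqref{eq:error_estimate_semilinear_s_final_L2}, I would run an Aubin--Nitsche duality argument. Set $e := u - \mathsf{u}_h$ and define
$d(x) := [a(x,u(x)) - a(x,\mathsf{u}_h(x))] / [u(x) - \mathsf{u}_h(x)]$
where the denominator is nonzero, and $d := 0$ otherwise; monotonicity gives $d \geq 0$, and \eqref{eq:assumption_on_a_phi} with $\mathfrak{r} = n/s$ yields $d \in L^{n/s}(\Omega)$ together with the linearization $a(\cdot,u) - a(\cdot,\mathsf{u}_h) = d\, e$. I would introduce the adjoint $\xi \in \tilde H^s(\Omega)$ defined by
$\mathcal{A}(v, \xi) + (d\xi, v)_{L^2(\Omega)} = (e, v)_{L^2(\Omega)} \quad \forall v \in \tilde H^s(\Omega)$,
use H\"older combined with $\tilde H^s(\Omega) \hookrightarrow L^{2n/(n-2s)}(\Omega)$ to verify $d\xi \in L^2(\Omega)$, and apply Proposition \ref{pro:state_regularity_smooth} (with $t = 0$) to $(-\Delta)^s \xi = e - d\xi$ to obtain $\xi \in H^{s + \vartheta}(\Omega)$ with $\|\xi\|_{H^{s+\vartheta}(\Omega)} \lesssim \|e\|_{L^2(\Omega)}$. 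Testing with $v = e$, subtracting $\Pi_h \xi$ by Galerkin orthogonality, and bounding the resulting terms respectively by $\|e\|_s \|\xi - \Pi_h\xi\|_s$ and $\|d\|_{L^{n/s}(\Omega)}\|e\|_{L^2(\Omega)}\|\xi - \Pi_h\xi\|_s$ (H\"older with exponents $(n/s,\,2,\,2n/(n-2s))$), I would combine $\|\xi - \Pi_h\xi\|_s \lesssim h^\vartheta \|\xi\|_{H^{s+\vartheta}(\Omega)}$ with \eqref{eq:error_estimate_semilinear_s_final} and absorb the small $h^\vartheta \|e\|_{L^2(\Omega)}^2$ term, arriving at \eqref{eq:error_estimate_semilinear_s_final_L2}.

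The principal difficulty is the regularity bootstrap for the adjoint $\xi$: establishing $d\xi \in L^2(\Omega)$ is exactly what the strengthened hypothesis $\mathfrak{r} = n/s$ provides. Under the weaker condition $\mathfrak{r} = n/2s$ used in the first two parts, H\"older only gives $d\xi \in L^{2n/(n+2s)}(\Omega) \hookrightarrow H^{-s}(\Omega)$, so Proposition \ref{pro:state_regularity_smooth} produces no improvement beyond the energy estimate $\xi \in H^s(\Omega)$ and cannot drive the sharp $L^2$ rate.
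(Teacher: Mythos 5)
Your proposal is correct and follows essentially the same route as the paper: monotonicity plus Galerkin orthogonality and \eqref{eq:assumption_on_phi_state_equation}/\eqref{eq:assumption_on_a_phi} for the quasi--best approximation, Theorem \ref{thm:regularity_space_state_equation} with interpolation for the rate, and an Aubin--Nitsche argument with the linearized coefficient (your $d$, the paper's $\chi$) and Proposition \ref{pro:state_regularity_smooth} with $t=0$ for the $L^2$ bound, including the correct identification of why $\mathfrak{r}=n/s$ is needed there. The only point you gloss over is the interpolation step in the $\|\cdot\|_s$-norm on the smooth (non-polytopal) domain, where the paper localizes the fractional seminorm and separately treats the boundary layer $\Omega\setminus\Omega_h$ and the case $s=\tfrac12$; this is a technical rather than conceptual omission.
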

\begin{proof}
Since $a$ is monotone increasing in the second variable, we obtain
\begin{align*}
\| u - \mathsf{u}_h \|_s^2 & = \mathcal{A}(u - \mathsf{u}_h,u - \mathsf{u}_h) \leq \mathcal{A}(u - \mathsf{u}_h,u - \mathsf{u}_h)  + (a(\cdot,u) - a(\cdot,\mathsf{u}_h), u - \mathsf{u}_h)_{L^2(\Omega)}
\\
& =  \mathcal{A}(u - \mathsf{u}_h,u - v_h)  + (a(\cdot,u) - a(\cdot,\mathsf{u}_h), u - v_h)_{L^2(\Omega)},
\quad
v_h \in \mathbb{V}_h
\end{align*}
upon utilizing Galerkin orthogonality. Invoke estimate \eqref{eq:assumption_on_a_phi} and the Sobolev embedding $H^s(\Omega) \hookrightarrow L^{\mathfrak{q}}(\Omega)$ with $\mathfrak{q} \leq 2n/(n-2s)$ to obtain \eqref{eq:error_estimate_semilinear_s}. 

Assume now that $\Omega$ is smooth and convex so Theorem \ref{thm:regularity_space_state_equation} applies; convexity being assumed for simplicity. To bound $\| u - v_h \|_s$ we first invoke \cite[Theorem 3.33]{McLean}:
\[
\| u -v_h \|_s \lesssim \| u - v_h \|_{H^s(\Omega)}
\quad
\forall v_h \in \mathbb{V}_h,
\qquad
 s \in (0,1) \setminus \{ \tfrac{1}{2} \}.
\]
The second ingredient is the localization of fractional order Sobolev seminorms  \cite{MR1752263,MR1930387}:
\[
|v|^2_{H^s(\Omega)} \leq \sum_{T} \left[ \int_{T} \int_{S_T} \frac{|v(x) - v(y)|^2}{|x-y|^{n+2s}} \mathrm{d}y \mathrm{d}x + \frac{\mathfrak{c}}{ s h_T^{2s} } \| v \|^2_{L^2(T)} \right], 
\quad
s \in (0,1),
\quad
\mathfrak{c}>0,
\]
for $v \in H^s(\Omega)$; $S_T $ denotes a suitable patch associated to $T$. We stress that curved domains/simplices are also handled in \cite{MR1752263,MR1930387}. It thus suffices to note that, if $\mathfrak{T}$ denotes a boundary curved simplex, the fact that $u \in \tilde H^s(\Omega) \cap H^{s+1/2 - \epsilon}(\Omega)$, with $\epsilon >0$ arbitrarily small, implies
\[
\| u - \mathsf{u}_h \|_{L^2(\mathfrak{T})} \leq  \| u \|_{L^2(\Omega \setminus \Omega_h)} \lesssim h^{2\upsilon}  \| u \|_{H^{\upsilon}(\Omega)}, \quad \upsilon = \min \{1,s+1/2 - \epsilon \},
\]
which follows from interpolating \cite[estimate (5.2.18)]{MR773854} and $\| v \|_{L^2(\Omega \setminus \Omega_h)} \leq \| v \|_{L^2(\Omega)}$. On the other hand, if $v \in \tilde H^s(\Omega) \cap H^{s+1/2 - \epsilon}(\Omega)$, with $\epsilon>0$ arbitrarily small, then
\[
\int_{\mathfrak{T}} \int_{S_{\mathfrak{T}}} \frac{|v(x) - v(y)|^2}{|x-y|^{n+2s}} \mathrm{d}y \mathrm{d}x
\leq |v|^2_{H^s(S_\mathfrak{T})} \lesssim h^{2(1/2 - \epsilon)} \| v \|^2_{H^{s+1/2 - \epsilon}(\Omega)}.
\]
We thus utilize interpolation error estimates for the Scott--Zhang operator \cite[Proposition 3.6]{MR3893441} and Theorem \ref{thm:regularity_space_state_equation} to arrive at the estimate \eqref{eq:error_estimate_semilinear_s_final}; see \cite[Section 3.2]{MR3893441} for details and the particular treatment of the case $s = 1/2$. 

The error estimate in $L^2(\Omega)$ follows from duality. Define 
$ 0 \leq  \chi \in L^{\mathfrak{r}}(\Omega)$ by
\[
\chi(x) = \frac{a(x,u(x)) - a(x,\mathsf{u}_h(x))}{u(x) - \mathsf{u}_h(x)}
~\mathrm{if}~u(x) \neq \mathsf{u}_h(x),
\quad
\chi(x) = 0 
~\mathrm{if}~u(x) = \mathsf{u}_h(x).
\]
Let $\mathfrak{z} \in \tilde H^s(\Omega)$ be the solution to
$
\mathcal{A}(v,\mathfrak{z}) + (\chi \mathfrak{z} ,v)_{L^2(\Omega)} = \langle \mathfrak{f} , v \rangle
$
for all $v \in \tilde H^s(\Omega)$; $\mathfrak{f} \in H^{-s}(\Omega)$. Let $\mathfrak{z}_h$ be the finite element approximation of $\mathfrak{z}$ within $\mathbb{V}_h$. Thus, 
\begin{align*}
\langle \mathfrak{f} , u - \mathsf{u}_h \rangle & = \mathcal{A}(u-\mathsf{u}_h,\mathfrak{z}) + (\chi \mathfrak{z}, u - \mathsf{u}_h)_{L^2(\Omega)} = \mathcal{A}(u-\mathsf{u}_h,\mathfrak{z} - \mathfrak{z}_h)
+ \mathcal{A}(u-\mathsf{u}_h,\mathfrak{z}_h)
\\
& + (\chi \mathfrak{z}, u - \mathsf{u}_h)_{L^2(\Omega)} 
= \mathcal{A}(u-\mathsf{u}_h,\mathfrak{z} - \mathfrak{z}_h)
+ (a(\cdot,u) - a(\cdot,\mathsf{u}_h), \mathfrak{z} - \mathfrak{z}_h)_{L^2(\Omega)} 
\\
& \leq
\| u - \mathsf{u}_h \|_s \| \mathfrak{z} - \mathfrak{z}_h \|_s + \| \phi \|_{L^{r}(\Omega)} \| u - \mathsf{u}_h \|_{L^{q}(\Omega)} \| \mathfrak{z} - \mathfrak{z}_h \|_{L^{q}(\Omega)}.
\end{align*}
Here, $q$ satisfies $2q^{-1} + r^{-1} = 1$, i.e., $q = 2n/(n-2s)$. 
Set $\mathfrak{f} = u - \mathsf{u}_h \in L^2(\Omega)$. Notice that, since $\phi \in L^{\mathfrak{r}}(\Omega)$, with $\mathfrak{r} = n/s$, $\chi  \mathfrak{z}$ belong to $L^2(\Omega)$. We can thus invoke Proposition \ref{pro:state_regularity_smooth} with $t=0$ to obtain $\| \mathfrak{z} \|_{H^{s+\theta}(\Omega)} \lesssim \|  u - \mathsf{u}_h \|_{L^2(\Omega)}$. Consequently,
\[
\| u - \mathsf{u}_h \|^2_{L^2(\Omega)} \lesssim  \| u - \mathsf{u}_h \|_s \| \mathfrak{z} - \mathfrak{z}_h \|_s \lesssim h^{ \frac{1}{2} - \epsilon} \|u\|_{H^{s+\frac{1}{2} - \epsilon}(\Omega)} h^{\vartheta} \| u - \mathsf{u}_h  \|_{L^{2}(\Omega)},
\]
where  $\vartheta = \min \{s,1/2-\epsilon\}$ and $\epsilon >0$ is arbitrarily small.  This concludes the proof.
\end{proof}

\subsection{Convergence properties}
Let  $1<p<\infty$ and let $\{ f_h \}_{h>0}$ be a sequence such that $f_h \in L^p(\Omega_h)$. We will say that $f_h \rightharpoonup f$ in $L^{p}(\Omega)$ as $h \downarrow0$ if $f \in L^p(\Omega)$ and 
\begin{equation}
\int_{\Omega_h} f_h(x) v(x) \mathrm{d}x \rightarrow \int_{\Omega} f(x) v(x) \mathrm{d}x
\quad
\forall v \in L^{q}(\Omega),
\quad
h \downarrow 0,
\quad
p^{-1} + q^{-1} = 1.
\label{eq:weak_convergence_h}
\end{equation}
If $p = \infty$, we will say that  $f_h \mathrel{\ensurestackMath{\stackon[1pt]{\rightharpoonup}{\scriptstyle\ast}}} f$ in $L^{\infty}(\Omega)$ if $f \in L^{\infty}(\Omega)$ and \eqref{eq:weak_convergence_h} holds for every $v \in L^1(\Omega)$. Observe that, upon considering a suitable extension of $f_h$ to $\Omega \setminus \Omega_h$, $f_h$ can be understood as an element of $L^{p}(\Omega)$. Since $| \Omega \setminus \Omega_h | \rightarrow 0$ as $h \downarrow 0$, \eqref{eq:weak_convergence_h} is equivalent to $\int_{\Omega} \tilde{f}_h(x) v(x) \mathrm{d}x \rightarrow \int_{\Omega} f(x) v(x) \mathrm{d}x$, for instance, for $\{ \tilde{f_h} \}_{h>0}$ being a uniformly bounded extension of $\{ f_h \}_{h>0}$ to $\Omega$ or an extension independent of $h$.

\begin{remark}[polytopes]
\rm
If $\Omega$ is a Lipschitz polytope, then \eqref{eq:weak_convergence_h} reduces to the standard concept of weak convergence in $L^p(\Omega)$ because $\Omega_ h = \Omega$ for every $h>0$.
\label{rem:polytopes_weak_convergence}
\end{remark}

\begin{proposition}[convergence]
Let $n \geq 2$, $s \in (0,1)$, and $r>n/2s$. Let $\Omega$ be an open, bounded, and convex domain such that $\partial \Omega \in C^{2}$. Assume that $a$ is as in the statement of Theorem \ref{thm:stata_equation_well_posedness} and satisfies, in addition, \eqref{eq:assumption_on_a_phi}. Let $u \in \tilde H^s(\Omega)$ solves \eqref{eq:weak_semilinear_pde}.  Let $\mathfrak{u}_h  \in \mathbb{V}_h$ be the solution to \eqref{eq:discrete_semilinear_pde} with $f$ replaced by $f_h \in L^{r}(\Omega_h)$. Then,
\[
f_h \rightharpoonup f \textrm{ in }L^{r}(\Omega)
\implies
\mathfrak{u}_h \rightarrow u \textrm{ in } L^{\mathfrak{t}}(\Omega),
\quad
h \downarrow 0,
\quad
\mathfrak{t} \leq 2n/(n-2s).
\]
Here, $f_h \rightharpoonup f$ in $L^r(\Omega)$ is understood in the sense of \eqref{eq:weak_convergence_h}.
\label{eq:proposition_convergence}
\end{proposition}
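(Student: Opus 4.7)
The plan is to (i) derive a uniform $\tilde H^s(\Omega)$-bound on $\{\mathfrak{u}_h\}$, (ii) extract a weak limit and identify it with the unique continuous solution via a passage to the limit in the discrete variational problem, and (iii) upgrade from subsequential to full-sequence convergence and to the claimed topologies.

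\textbf{A priori bound.} Testing \eqref{eq:discrete_semilinear_pde} with $v_h=\mathfrak{u}_h$, the monotonicity of $a(x,\cdot)$ yields $\int_{\Omega_h}(a(\cdot,\mathfrak{u}_h)-a(\cdot,0))\mathfrak{u}_h\,\mathrm{d}x\ge 0$, so H\"older's inequality together with the Sobolev embedding $\tilde H^s(\Omega)\hookrightarrow L^{r'}(\Omega)$ (valid because $r>n/2s$ forces $r'<n/(n-2s)$) gives $\|\mathfrak{u}_h\|_s\lesssim \|f_h\|_{L^r(\Omega)}+\|a(\cdot,0)\|_{L^r(\Omega)}$. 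Since $\{f_h\}$ is weakly convergent, hence bounded, in $L^r(\Omega)$, the sequence $\{\mathfrak{u}_h\}$ is uniformly bounded in $\tilde H^s(\Omega)$.

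\textbf{Identification of the limit.} Along a nonrelabeled subsequence, $\mathfrak{u}_h\rightharpoonup u^\ast$ in $\tilde H^s(\Omega)$; compactness $H^s(\Omega)\hookrightarrow\hookrightarrow L^\mathfrak{t}(\Omega)$ for $\mathfrak{t}<2n/(n-2s)$ strengthens this to strong $L^\mathfrak{t}$ convergence and, up to a further subsequence, to a.e.~convergence. For $v\in C_0^\infty(\Omega)$ and $h$ small so that $\mathrm{supp}(v)\subset\Omega_h$, let $v_h\in \mathbb{V}_h$ be the Scott--Zhang interpolant; it converges to $v$ in $\tilde H^s(\Omega)\cap L^\infty(\Omega)$. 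The bilinear form passes to the limit, $\mathcal{A}(\mathfrak{u}_h,v_h)\to \mathcal{A}(u^\ast,v)$, by weak/strong pairing; the right-hand side $(f_h,v_h)_{L^2(\Omega_h)}\to (f,v)_{L^2(\Omega)}$ by the assumed weak convergence of $\{f_h\}$ combined with $v_h\to v$ in $L^{r'}(\Omega)$. For the nonlinear term, \eqref{eq:assumption_on_a_phi} with $\phi\in L^{n/2s}(\Omega)$ and H\"older's inequality with any subcritical $\mathfrak{p}<2n/(n-2s)$ and dual $\mathfrak{p}'$ satisfying $2s/n+1/\mathfrak{p}+1/\mathfrak{p}'=1$ give
\begin{equation*}
\left|\int_{\Omega_h}(a(\cdot,\mathfrak{u}_h)-a(\cdot,u^\ast))\,v_h\,\mathrm{d}x\right|\le \|\phi\|_{L^{n/2s}(\Omega)}\,\|\mathfrak{u}_h-u^\ast\|_{L^\mathfrak{p}(\Omega)}\,\|v_h\|_{L^{\mathfrak{p}'}(\Omega)}\to 0,
\end{equation*}
using $\mathfrak{u}_h\to u^\ast$ strongly in $L^\mathfrak{p}(\Omega)$ and boundedness of $\|v_h\|_{L^{\mathfrak{p}'}(\Omega)}$; the residual terms $\int_{\Omega_h} a(\cdot,u^\ast)(v_h-v)$ and $\int_{\Omega\setminus\Omega_h} a(\cdot,u^\ast)\,v$ vanish thanks to $v_h\to v$ in $L^\infty(\Omega)$, the integrability $a(\cdot,u^\ast)\in L^{2n/(n+2s)}(\Omega)$, and $|\Omega\setminus\Omega_h|\to 0$. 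Density of $C_0^\infty(\Omega)$ in $\tilde H^s(\Omega)$ shows $u^\ast$ satisfies \eqref{eq:weak_semilinear_pde}, and uniqueness in Theorem~\ref{thm:stata_equation_well_posedness} forces $u^\ast=u$.

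\textbf{Upgrade to the full sequence and full range of $\mathfrak{t}$.} Independence of the limit from the subsequence yields $\mathfrak{u}_h\to u$ in $L^\mathfrak{t}(\Omega)$ for all $\mathfrak{t}<2n/(n-2s)$ by a Urysohn-type argument; the endpoint $\mathfrak{t}=2n/(n-2s)$ is obtained via a.e.~convergence together with the paper's standing uniform $L^\infty(\Omega)$-bound on $\{\mathfrak{u}_h\}$ and dominated convergence. The principal difficulty is the passage to the limit in the semilinear term under only weak $\tilde H^s$ compactness; the precise exponent $\mathfrak{r}=n/2s$ in \eqref{eq:assumption_on_a_phi} is what enables H\"older to close against strong $L^\mathfrak{p}$ convergence at every subcritical exponent.
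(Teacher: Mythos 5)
Your argument takes a genuinely different route from the paper's. The paper splits $u - \mathfrak{u}_h = (u - \mathsf{u}_h) + (\mathsf{u}_h - \mathfrak{u}_h)$, where $\mathsf{u}_h$ solves the discrete problem with the \emph{exact} datum $f$: the first piece is handled by the quasi--best approximation estimate \eqref{eq:error_estimate_semilinear_s} plus a density argument, and the second by a monotonicity/stability bound $\|\mathsf{u}_h - \mathfrak{u}_h\|_s \lesssim \|f - f_h\|_{H^{-s}(\Omega)}$, which tends to zero because the compact embedding $L^r(\Omega)\hookrightarrow H^{-s}(\Omega)$ upgrades weak $L^r$ convergence of the data to strong $H^{-s}$ convergence. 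You instead run a compactness/identification-of-the-limit argument: uniform energy bound, weak limit, passage to the limit in the discrete variational form, uniqueness. Your route is more robust in that it does not lean on quasi--best approximation, but the paper's route buys strong convergence in the energy norm $\|\cdot\|_s$, which makes the critical Sobolev exponent immediate.

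That is exactly where your proof has a genuine gap: the endpoint $\mathfrak{t}=2n/(n-2s)$ is part of the claim, your compactness step only yields strong $L^{\mathfrak{t}}$ convergence for subcritical $\mathfrak{t}$, and to reach the endpoint you invoke the uniform $L^\infty(\Omega)$ bound \eqref{eq:u_h_bounded}. That bound is a \emph{separate standing assumption} that the paper introduces only later (in the section on the adjoint equation); it is not among the hypotheses of this proposition, and a uniform discrete $L^\infty$ bound is not known to follow from the stated assumptions. You can close the gap within your own framework: test \eqref{eq:discrete_semilinear_pde} with $v_h=\mathfrak{u}_h$ and pass to the limit --- the semilinear term is controlled exactly as in your H\"older computation, and the source term converges by the assumed weak convergence of $f_h$ --- to obtain $\mathcal{A}(\mathfrak{u}_h,\mathfrak{u}_h)\to\mathcal{A}(u,u)$; together with weak convergence in the Hilbert space $\tilde H^s(\Omega)$ this gives strong convergence in $\|\cdot\|_s$, and the continuous embedding $\tilde H^s(\Omega)\hookrightarrow L^{2n/(n-2s)}(\Omega)$ then covers the critical exponent. (Alternatively, adopt the paper's splitting.) The remainder of your argument --- the a priori bound, the limit passage in the bilinear, source, and semilinear terms, the identification via uniqueness, and the subsequence principle --- is sound.
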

\begin{proof}
We begin with a simple application of the triangle inequality and write
\[
\|  u -  \mathfrak{u}_h  \|_{L^{\mathfrak{t}}(\Omega)} 
\leq 
\|   u - \mathsf{u}_h  \|_{L^{\mathfrak{t}}(\Omega)}
+
\|  \mathsf{u}_h  -  \mathfrak{u}_h  \|_{L^{\mathfrak{t}}(\Omega)},
\quad
\mathfrak{t} \leq  2n/(n-2s),
\]
where $\mathsf{u}_h$ denotes the solution to \eqref{eq:discrete_semilinear_pde}. Since $H^s(\Omega) \hookrightarrow L^{\mathfrak{q}}(\Omega)$ for $\mathfrak{q} \leq 2n/(n-2s)$, the quasi--best approximation estimate \eqref{eq:error_estimate_semilinear_s} yields $\|   u - \mathsf{u}_h \|_{L^{\mathsf{q}}(\Omega)} \lesssim \| u - v_h \|_s$ for an arbitrary $v_h \in \mathbb{V}_h$. A density argument as in \cite[Theorem 3.2.3]{MR0520174} reveals the convergence result $\| u - \mathsf{u}_h \|_{L^{\mathsf{q}}(\Omega)} \rightarrow 0$ as $h \downarrow 0$. 

To control $\|  \mathsf{u}_h  -  \mathfrak{u}_h  \|_{L^{\mathfrak{t}}(\Omega)}$ we invoke the problems that $\mathsf{u}_h$ and $ \mathfrak{u}_h$ solve:
\begin{multline}
\|  \mathsf{u}_h  -  \mathfrak{u}_h  \|^2_s = \mathcal{A}(\mathsf{u}_h  -  \mathfrak{u}_h , \mathsf{u}_h  -  \mathfrak{u}_h   ) = (f - f_h, \mathsf{u}_h  -  \mathfrak{u}_h)_{L^2(\Omega)} 
\\
-  \left( a(\cdot,\mathsf{u}_h) - a(\cdot,\mathfrak{u}_h), \mathsf{u}_h   -  \mathfrak{u}_h \right)_{L^2(\Omega)}
\leq
\| f - f_h \|_{H^{-s}(\Omega)} \| \mathsf{u}_h  -  \mathfrak{u}_h \|_s.
\end{multline}
This immediately yields $\|  \mathsf{u}_h  -  \mathfrak{u}_h  \|_{L^{\mathfrak{t}}(\Omega)} \lesssim \| f - f_h \|_{H^{-s}(\Omega)}$. Since $f_h \rightharpoonup f$ in $L^{r}(\Omega)$ we can thus obtain that $ \|  \mathsf{u}_h  -  \mathfrak{u}_h  \|_{L^{\mathfrak{t}}(\Omega)} \rightarrow 0$ as $h \downarrow 0$. This concludes the proof.
\end{proof}

\begin{remark}[convergence on polytopes]
\rm
The result of Proposition \ref{eq:proposition_convergence} can also be obtained for Lipschitz polytopes; observe that the involved arguments do not utilize further regularity beyond what is natural for the problem: $u \in \tilde H^s(\Omega) \cap L^{\infty}(\Omega)$.
\label{rem:polytopes_state_convergence}
\end{remark}

\section{Finite element approximation of the adjoint equation}
\label{sec:fem_adjoint}
We begin the section by introducing the following approximation of \eqref{eq:adj_eq}: Find $q_h \in \V_h$ such that
\begin{equation}
 \label{eq:discrete_adjoint}
  \mathcal{A} (v_h,q_h)  + \left( \tfrac{\partial a}{\partial u}(\cdot,u) q_h, v_h \right)_{L^2(\Omega)} 
  = 
  \left(\tfrac{\partial L}{\partial u} (\cdot,u), v_h \right)_{L^2(\Omega)} \quad \forall v_h \in \V_h.
\end{equation}
Here, $u \in \tilde H^s(\Omega) \cap L^{\infty}(\Omega)$ denotes the unique solution to \eqref{eq:weak_st_eq}. Observe that assumption \textnormal{\ref{B2}} yields $\partial L/ \partial u(\cdot,u) \in L^r(\Omega)$ for $r>n/2s$ while assumption \textnormal{\ref{A2}} guarantees that $\partial a/\partial u (x,u) \geq 0$ for a.e.~$x \in \Omega$ and for all $u \in \mathbb{R}$. The existence of a unique discrete solution $q_h \in \mathbb{V}_h$ to problem \eqref{eq:discrete_adjoint} is thus immediate.

We present the following error estimates.
\begin{theorem}[error estimate]
Let $n \geq 2$ and $s \in (0,1)$. Let $\Omega$ be a convex domain such that $\partial \Omega \in C^{\infty}(\Omega)$. Assume that \ref{A1}--\ref{A3}, \ref{B1}--\ref{B2}, and \ref{C1}--\ref{C2} hold.  Let $p \in \tilde H^s(\Omega)$ be the solution to \eqref{eq:adj_eq} and let $q_h \in \mathbb{V}_h$ be its finite element approximation obtained as the solution to \eqref{eq:discrete_adjoint}. Then, we have the error estimates
\begin{equation}
\| p - q_h \|_{s} \lesssim h^{\frac{1}{2} - \epsilon} \| p \|_{H^{s + \frac{1}{2} - \epsilon}(\Omega)},
\qquad
\| p - q_h \|_{L^2(\Omega)} \lesssim h^{ \vartheta + \frac{1}{2} - \epsilon} \| p \|_{H^{s + \frac{1}{2} -\epsilon}(\Omega)},
\label{eq:estimate_adjoint_equation_s_1}
\end{equation}
where $\vartheta = \min \{s,1/2 - \epsilon\}$ and $\epsilon >0$ is arbitrarily small. In both estimates, the hidden constant is independent of $p$, $q_h$, and $h$.
\label{thm:error_estimates_adjoint_equation}
\end{theorem}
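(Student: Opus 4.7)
\textbf{Proof proposal for Theorem \ref{thm:error_estimates_adjoint_equation}.}
The plan is to mimic the structure of the proof of Theorem \ref{thm:error_estimates_state_equation}, exploiting the fact that the adjoint equation \eqref{eq:adj_eq} is \emph{linear} in $p$ (with $u = \bar u$ fixed). First I would establish a Cea-type quasi-best approximation in the energy norm. Set $B(v,w) := \mathcal{A}(v,w) + (\tfrac{\partial a}{\partial u}(\cdot,u)\, w, v)_{L^2(\Omega)}$; by \ref{A2} the extra term is nonnegative, and combined with the coercivity of $\mathcal{A}$ this gives $B(v,v) \geq \|v\|_s^2$ on $\tilde H^s(\Omega)$. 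Boundedness of $B$ follows from the continuity of $\mathcal{A}$ and from $\tfrac{\partial a}{\partial u}(\cdot,u) \in L^\infty(\Omega)$ (which is a consequence of $u \in L^\infty(\Omega)$ and \ref{A3}), together with $\tilde H^s(\Omega) \hookrightarrow L^2(\Omega)$. Galerkin orthogonality then yields
\begin{equation*}
\| p - q_h \|_s^2 \leq B(p - q_h, p - q_h) = B(p - q_h, p - v_h) \lesssim \| p - q_h \|_s \| p - v_h \|_s \quad \forall v_h \in \mathbb{V}_h,
\end{equation*}
hence $\|p - q_h\|_s \lesssim \inf_{v_h \in \mathbb{V}_h} \| p - v_h \|_s$.

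For the energy error, I would proceed exactly as in Theorem \ref{thm:error_estimates_state_equation}. Invoke Theorem \ref{thm:regularity_space} (valid since \ref{A1}--\ref{A3}, \ref{B1}--\ref{B2}, \ref{C1}--\ref{C2} hold and $\partial \Omega \in C^\infty$) to obtain $p \in H^{s+1/2-\epsilon}(\Omega)$. Then combine \cite[Theorem 3.33]{McLean} to pass from $\|\cdot\|_s$ to the localizable $H^s(\Omega)$ norm, the localization of fractional order seminorms from \cite{MR1752263,MR1930387}, the treatment of boundary curved simplices $\mathfrak{T}$ via $\| p \|_{L^2(\Omega \setminus \Omega_h)} \lesssim h^{2\upsilon} \|p\|_{H^\upsilon(\Omega)}$ with $\upsilon = \min\{1, s+1/2-\epsilon\}$, and the Scott--Zhang interpolation estimates \cite[Proposition 3.6]{MR3893441}, applied to $p$, to conclude the first bound in \eqref{eq:estimate_adjoint_equation_s_1}. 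The case $s = 1/2$ is handled as in \cite[Section 3.2]{MR3893441}.

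The $L^2$ estimate will follow from an Aubin--Nitsche duality argument. I introduce $\mathfrak{z} \in \tilde H^s(\Omega)$ as the solution to
\begin{equation*}
\mathcal{A}(\mathfrak{z}, v) + \bigl(\tfrac{\partial a}{\partial u}(\cdot,u)\, \mathfrak{z}, v\bigr)_{L^2(\Omega)} = (p - q_h, v)_{L^2(\Omega)} \qquad \forall v \in \tilde H^s(\Omega),
\end{equation*}
whose well-posedness is immediate from \ref{A2}. Setting $v = p - q_h$, exploiting the symmetry of $\mathcal{A}$ and the Galerkin orthogonality of the adjoint error gives $\| p - q_h \|_{L^2(\Omega)}^2 = B(p - q_h, \mathfrak{z} - \mathfrak{z}_h) \lesssim \| p - q_h \|_s \| \mathfrak{z} - \mathfrak{z}_h \|_s$, where $\mathfrak{z}_h \in \mathbb{V}_h$ is the discrete approximation of $\mathfrak{z}$. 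Since $\tfrac{\partial a}{\partial u}(\cdot,u)\,\mathfrak{z} \in L^2(\Omega)$ and $p - q_h \in L^2(\Omega)$, I can rewrite the dual problem as a fractional Poisson problem with right-hand side in $L^2(\Omega)$ and apply Proposition \ref{pro:state_regularity_smooth} with $t = 0$ to obtain $\|\mathfrak{z}\|_{H^{s+\vartheta}(\Omega)} \lesssim \|p-q_h\|_{L^2(\Omega)}$, with $\vartheta = \min\{s, 1/2-\epsilon\}$. The already-established energy estimate for the dual problem then yields $\|\mathfrak{z} - \mathfrak{z}_h\|_s \lesssim h^\vartheta \|p-q_h\|_{L^2(\Omega)}$, and combining with the first bound in \eqref{eq:estimate_adjoint_equation_s_1} produces the desired $L^2$ estimate after dividing by $\|p - q_h\|_{L^2(\Omega)}$.

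The main technical obstacle is not the abstract scheme but the careful bookkeeping on the curved polygonal domain $\Omega_h \neq \Omega$: one must handle boundary curved simplices $\mathfrak{T}$, deal with the fact that extensions by zero of discrete functions live on $\bar\Omega \setminus \Omega_h$, and in the dual argument verify that the regularity theory of Proposition \ref{pro:state_regularity_smooth} applies uniformly when the potential $\tfrac{\partial a}{\partial u}(\cdot,u)$ is only $L^\infty$ (rather than smooth). Both issues are dispatched by the same tools already used in Theorem \ref{thm:error_estimates_state_equation}, so no new ingredients are required.
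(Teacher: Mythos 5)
Your proposal is correct and follows essentially the same route as the paper: a quasi-best approximation estimate in $\|\cdot\|_s$ obtained from Galerkin orthogonality together with the nonnegativity of $\tfrac{\partial a}{\partial u}(\cdot,u)$ and its local boundedness from \ref{A3}, the $H^{s+1/2-\epsilon}(\Omega)$ regularity of $p$ (which the paper obtains by the bootstrap of Theorem \ref{thm:regularity_space} rather than by citing that theorem directly, since there $p$ need not be the optimal adjoint) combined with the interpolation machinery of Theorem \ref{thm:error_estimates_state_equation}, and an Aubin--Nitsche duality argument for the $L^2(\Omega)$ bound. The only difference is presentational: you package the argument via the full bilinear form $B$ and Cea's lemma, and you write out the duality step that the paper leaves implicit.
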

\begin{proof}
Notice that, within our setting, Galerkin orthogonality reads as follows: for every $v_h \in \mathbb{V}_h$, we have 
$
\mathcal{A}(v_h,p - q_h) + \left( \tfrac{\partial a}{\partial u}(\cdot,u) (p - q_h), v_h \right)_{L^2(\Omega)} = 0.
$
Thus,
\begin{equation*}
\begin{aligned}
\| p - q_h \|_{s}^2 
& =
 \mathcal{A}(p - q_h,p ) + \left(  \tfrac{\partial a}{\partial u}(\cdot,u) (p -q_h ), q_h \right)_{L^2(\Omega)}
 \\
 & =
  \mathcal{A}(p - q_h,p -v_h) +   \left(  \tfrac{\partial a}{\partial u}(\cdot,u) (p-q_h ) ,q_h - v_h \right)_{L^2(\Omega)}.
\end{aligned}
\end{equation*}
Since $\partial a/\partial u (x,u) \geq 0$ for a.e.~$x \in \Omega$ and $u \in \mathbb{R}$, we invoke \ref{A3} and the fact that $u \in \tilde H^s(\Omega) \cap L^{\infty}(\Omega)$ to obtain 
$
\| p - q_h \|_{s}^2 \leq\| p - q_h \|_{s} \| p - v_h \|_{s} + C_{\mathfrak{m}}\| p - q_h \|_{L^2(\Omega)} \| p - v_h \|_{L^2(\Omega)}.
$
This estimate yields the quasi--best approximation property: $\| p - q_h \|_{s} \lesssim \| p - v_h \|_{s}$ for every $v_h \in \mathbb{V}_h$. The left--hand side estimate in  \eqref{eq:estimate_adjoint_equation_s_1} thus follows from the arguments developed in the proof of Theorem \ref{thm:error_estimates_state_equation}. We note that, in view of Proposition \ref{pro:state_regularity_smooth} and assumptions \ref{C1} and \ref{C2}, a bootstrap argument, as the one developed in the proof of Theorem \ref{thm:regularity_space}, reveals that $p \in H^{s+\frac{1}{2}-\epsilon}(\Omega)$ for every $\epsilon >0$ arbitrarily small. The right--hand side estimate in \eqref{eq:estimate_adjoint_equation_s_1} follows from a duality argument.
\end{proof}

In what follows, we will operate under the \emph{assumption} that discrete solutions $u_h$ to problem \eqref{eq:discrete_semilinear_pde} are uniformly bounded in $L^{\infty}(\Omega)$, i.e.,
 \begin{equation}
 \exists C >0: \quad \| u_h \|_{L^{\infty}(\Omega)} \leq C \quad \forall h > 0.
 \label{eq:u_h_bounded}
 \end{equation}

Let $u_h$ be the solution to \eqref{eq:discrete_semilinear_pde} with $f$ replaced by $z_h$; $z_h$ being an arbitrary piecewise constant function over $\T_h$. Let $p_h \in \mathbb{V}_h$ be the unique solution to 
\begin{equation}
 \label{eq:discrete_adjoint_2}
 \quad
  \mathcal{A} (v_h, p_h)  + \left( \tfrac{\partial a}{\partial u}(\cdot,u_h) p_h, v_h \right)_{L^2(\Omega)} 
  = 
  \left(\tfrac{\partial L}{\partial u} (\cdot,u_h), v_h \right)_{L^2(\Omega)} \quad \forall v_h \in \V_h.
\end{equation}

We now derive estimates for the error $p - p_h$. To accomplish this task, we first define $q$ as the solution to the following problem: Find $q \in \tilde H^s(\Omega)$ such that
\begin{equation}
\label{eq:q}
  \mathcal{A} (v,q)  + \left( \tfrac{\partial a}{\partial u}(\cdot,u_h) q, v \right)_{L^2(\Omega)} 
  = 
 \left(\tfrac{\partial L}{\partial u} (\cdot,u_h), v \right)_{L^2(\Omega)} \quad \forall v \in \tilde H^s(\Omega).
\end{equation}
In view of the assumptions on the data and \eqref{eq:u_h_bounded}, problems \eqref{eq:discrete_adjoint_2} and \eqref{eq:q} are well-defined. In particular, we have $q \in \tilde H^s(\Omega) \cap L^{\infty}(\Omega)$. Observe that $p_h$ can be seen as the finite element approximation of $q$ within $\mathbb{V}_h$. Consequently, Theorem \ref{thm:error_estimates_adjoint_equation} yields
\begin{equation}
\| q - p_h \|_s \lesssim h^{\frac{1}{2}-\epsilon} \| q \|_{H^{s+ \frac{1}{2}-\epsilon}(\Omega)},
\quad
\| q - p_h \|_{L^2(\Omega)} \lesssim h^{\vartheta + \frac{1}{2}-\epsilon} \| q \|_{H^{s + \frac{1}{2} - \epsilon}(\Omega)},
\label{eq:q-ph}
\end{equation}
where $\vartheta = \min \{ s , 1/2 - \epsilon \}$ and $\epsilon$ is arbitrarily small. Observe that \ref{C1} and \ref{C2} guarantee that $q \in H^{s + \frac{1}{2}-\epsilon}(\Omega)$. We also define the variable $y$ to be such that
\begin{equation}
\label{eq:mathsf_u}
y \in \tilde H^s(\Omega):
\quad
  \mathcal{A}(y,v)  +  \langle a(\cdot,y),v \rangle = \langle z_h , v \rangle 
  \quad \forall v \in \tilde H^{s}(\Omega).
\end{equation}
Since $z_h \in L^{\infty}(\Omega)$, Theorem \ref{thm:stata_equation_well_posedness} yields the well-posedness of \eqref{eq:mathsf_u} and $y \in \tilde H^s(\Omega) \cap L^{\infty}(\Omega)$. On the other hand, since $z_h \in H^{\frac{1}{2}-\epsilon}(\Omega)$, for every $\epsilon >0$, a bootstrapping argument and \ref{C1} allow us to conclude that $y \in H^{s+ \frac{1}{2}-\epsilon}(\Omega)$ for every $\epsilon >0$.

We present the following error estimates.
\begin{theorem}[error estimates]
Let the assumptions of Theorem \ref{thm:error_estimates_adjoint_equation} hold. Assume, in addition, that $\partial L/\partial u$ is locally Lipschitz with respect to the second variable and that $a$ satisfies \eqref{eq:assumption_on_a_phi}. Let $p \in \tilde H^s(\Omega)$ be the solution to \eqref{eq:adj_eq} and let $p_h \in \mathbb{V}_h$ be the solution to \eqref{eq:discrete_adjoint_2}. Then, we have the error estimate
\begin{equation}
\| p - p_h \|_{s} \lesssim h^{\frac{1}{2} - \epsilon} + \| z - z_h \|_{L^2(\Omega)}.
\label{eq:estimate_adjoint_equation_s_2}
\end{equation}
If, in addition, $a$ satisfies \eqref{eq:assumption_on_a_phi} with $\mathfrak{r} = n/s$, we also have the error estimate
\begin{equation}
\| p - p_h \|_{L^2(\Omega)} \lesssim h^{ \vartheta + \frac{1}{2} - \epsilon} + \| z - z_h \|_{L^2(\Omega)},
\label{eq:estimate_adjoint_equation_s_3}
\end{equation}
where $\vartheta = \min \{s,1/2 - \epsilon\}$. In both estimates, $\epsilon >0$ is arbitrarily small and the hidden constant is independent of $h$.
\label{thm:error_estimates_adjoint_equation_2}
\end{theorem}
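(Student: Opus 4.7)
The plan is to reduce the estimates \eqref{eq:estimate_adjoint_equation_s_2}--\eqref{eq:estimate_adjoint_equation_s_3} to the linear adjoint error bounds already proved in Theorem \ref{thm:error_estimates_adjoint_equation} by means of a triangle inequality based on the auxiliary functions $q$ and $y$ introduced just before the statement. Writing
\[
\|p-p_h\|_s \leq \|p-q\|_s + \|q-p_h\|_s,
\]
the second summand is directly controlled by \eqref{eq:q-ph}, giving $\|q-p_h\|_s \lesssim h^{1/2-\epsilon}$; the regularity $q \in H^{s+1/2-\epsilon}(\Omega)$ needed there follows from \ref{C1}--\ref{C2} together with the uniform $L^{\infty}$ bound \eqref{eq:u_h_bounded}, via the bootstrap used in the proof of Theorem \ref{thm:regularity_space}, so that the hidden constant is uniform in $h$.

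For the perturbation contribution $\|p-q\|_s$, I subtract \eqref{eq:adj_eq} from \eqref{eq:q} to obtain, for every $v \in \tilde H^s(\Omega)$,
\[
\mathcal{A}(v,p-q) + \bigl(\tfrac{\partial a}{\partial u}(\cdot,u)(p-q),v\bigr)_{L^2(\Omega)} = \Bigl(\bigl[\tfrac{\partial a}{\partial u}(\cdot,u_h)-\tfrac{\partial a}{\partial u}(\cdot,u)\bigr]q,\,v\Bigr)_{L^2(\Omega)} + \bigl(\tfrac{\partial L}{\partial u}(\cdot,u)-\tfrac{\partial L}{\partial u}(\cdot,u_h),v\bigr)_{L^2(\Omega)}.
\]
Testing with $v=p-q$, discarding the nonnegative zeroth-order term thanks to \ref{A2}, and combining (i) the local Lipschitz continuity of $\partial_u a$ obtained from \ref{A3} by the mean value theorem, (ii) the hypothesized local Lipschitz continuity of $\partial_u L$, (iii) the uniform $L^{\infty}$ bounds on $u$, $u_h$, and $q$ (the last of which follows from the $L^{\infty}$-argument of Theorem \ref{thm:stata_equation_well_posedness} applied to \eqref{eq:q}), with the embedding $\tilde H^s(\Omega)\hookrightarrow L^2(\Omega)$ yields
\[
\|p-q\|_s \lesssim \|u-u_h\|_{L^2(\Omega)}.
\]

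It remains to bound $\|u-u_h\|_{L^2(\Omega)}$, for which I use the splitting $\|u-u_h\|_{L^2(\Omega)} \leq \|u-y\|_{L^2(\Omega)} + \|y-u_h\|_{L^2(\Omega)}$. Subtracting the continuous state equations for $u$ (forcing $z$) and $y$ (forcing $z_h$), testing with $u-y$, and invoking the monotonicity of $a$ yields $\|u-y\|_s \lesssim \|z-z_h\|_{L^2(\Omega)}$, hence $\|u-y\|_{L^2(\Omega)} \lesssim \|z-z_h\|_{L^2(\Omega)}$. Since $u_h$ is, by construction, the finite element approximation of $y$, Theorem \ref{thm:error_estimates_state_equation}---this is precisely the step that invokes the reinforced assumption $\mathfrak{r}=n/s$ in \eqref{eq:assumption_on_a_phi}---provides $\|y-u_h\|_{L^2(\Omega)} \lesssim h^{\vartheta+1/2-\epsilon}$. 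Collecting all pieces produces \eqref{eq:estimate_adjoint_equation_s_2}, after absorbing the $h^{\vartheta+1/2-\epsilon}$ contribution into $h^{1/2-\epsilon}$. For the $L^2$ bound \eqref{eq:estimate_adjoint_equation_s_3} I repeat the triangle inequality in $L^2(\Omega)$: the term $\|q-p_h\|_{L^2(\Omega)}$ is handled by the right half of \eqref{eq:q-ph}, while $\|p-q\|_{L^2(\Omega)} \leq \|p-q\|_s \lesssim \|u-u_h\|_{L^2(\Omega)}$ closes the argument using the splitting above.

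The main obstacle will be the careful bookkeeping in the coefficient-perturbation step: I must verify that every Lipschitz constant and every $L^{\infty}$ bound appearing in the estimates is uniform in $h$ (which ultimately rests on the standing assumption \eqref{eq:u_h_bounded} and on the linear $L^{\infty}$-theory applied to \eqref{eq:q}), and clearly isolate the unique place where the strengthened hypothesis $\mathfrak{r}=n/s$ is actually used, namely the $L^2$-FEM error for $y-u_h$ invoked in the second estimate.
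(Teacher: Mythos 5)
Your proposal is correct and follows essentially the same route as the paper's proof: the triangle inequality through the auxiliary adjoint state $q$, the bound $\|q-p_h\|_s\lesssim h^{1/2-\epsilon}$ from \eqref{eq:q-ph}, the coefficient-perturbation equation yielding $\|p-q\|_s\lesssim \|u-u_h\|_{L^2(\Omega)}(1+\|q\|_{L^{\infty}(\Omega)})$, and the splitting of $\|u-u_h\|_{L^2(\Omega)}$ through $y$. One small bookkeeping point: for \eqref{eq:estimate_adjoint_equation_s_2} you should control $\|y-u_h\|_{L^2(\Omega)}$ by $\|y-u_h\|_s\lesssim h^{1/2-\epsilon}$ via \eqref{eq:error_estimate_semilinear_s_final} rather than invoking the $L^2$ duality estimate \eqref{eq:error_estimate_semilinear_s_final_L2}, so that the strengthened hypothesis $\mathfrak{r}=n/s$ is genuinely reserved for \eqref{eq:estimate_adjoint_equation_s_3}, as your own closing remark (and the theorem statement) intends.
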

\begin{proof}
We begin with a simple application of the triangle inequality: $\| p - p_h \|_{s} \leq \| p - q \|_{s} + \| q - p_h \|_{s}$. The control of $\| q - p_h \|_{s}$ follows from \eqref{eq:q-ph}. To bound $\| p - q \|_{s}$, we first observe that, for every $v \in \tilde H^s(\Omega)$, we have
\begin{multline*}
p- q \in \tilde H^s(\Omega):
\quad
  \mathcal{A} (v,p-q)  + \left( \tfrac{\partial a}{\partial u}(\cdot,u) (p-q), v \right)_{L^2(\Omega)} 
 \\
  = 
   \left( \left[ \tfrac{\partial a}{\partial u}(\cdot,u_h) -  \tfrac{\partial a}{\partial u}(\cdot,u) \right]q, v \right)_{L^2(\Omega)} 
   +
  \left( \tfrac{\partial L}{\partial u} (\cdot,u) - \tfrac{\partial L}{\partial u} (\cdot,u_h), v \right)_{L^2(\Omega)}.
\end{multline*}
Since $\tfrac{\partial a}{\partial u}$ and $ \tfrac{\partial L}{\partial u} $ are locally Lipschitz with respect to the second variable we obtain
$
\| p - q \|_s \lesssim \| u - u_h \|_{L^2(\Omega)} ( 1 + \| q \|_{L^{\infty}(\Omega)}).
$
It thus suffices to bound $\| u - u_h \|_{L^2(\Omega)}$. To do this, we write $ \| u - u_h \|_{L^2(\Omega)} \leq \| u - y \|_{L^2(\Omega)} + \| y - u_h \|_{L^2(\Omega)}$, where $y$ denotes the solution to \eqref{eq:mathsf_u}. An application of Theorem \ref{thm:error_estimates_state_equation} yields the control of $\| y - u_h \|_{L^2(\Omega)}$. The control of $\| u - y \|_{L^2(\Omega)}$ follows from writing the problem that 
$u - y $ solves and utilizing assumptions \ref{A1}--\ref{A3}:
$ \| u -y \|_{L^2(\Omega)} \lesssim \| z - z_h \|_{L^2(\Omega)}$. A collection of the derived estimates yield \eqref{eq:estimate_adjoint_equation_s_2}. The proof of \eqref{eq:estimate_adjoint_equation_s_3} follows similar arguments.
\end{proof}

\section{Finite element approximation for the optimal control problem}
\label{sec:fem_control}

In this section, we propose a finite element discretization scheme for our control problem. We analyze convergence properties and derive, when possible, error estimates. To accomplish this task, we operate within the discrete setting introduced in section \ref{sec:fem} and introduce, in addition, the finite element space of piecewise constant functions 
\begin{equation}
 \mathbb{Z}_h = \left\{ v_h \in L^{\infty}( \Omega_h ): {v_h}_{|T} \in \mathbb{P}_0(T) \ \forall T \in \T_h  \right\}
 \label{eq:piecewise_constant_functions}
\end{equation}
and the space of discrete admissible controls
$
 \mathbb{Z}_{\mathrm{ad},h} = \mathbb{Z}_{\mathrm{ad}} \cap  \mathbb{Z}_h.
$

\subsection{The discrete optimal control problem}
\label{sec:discrete_optimal_control_problem}

We consider the following discrete counterpart of the continuous optimal control problem \eqref{eq:min}--\eqref{eq:weak_st_eq}: Find
\begin{equation}\label{eq:min_discrete}
\min \{ J_h(u_h,z_h): (u_h,z_h) \in \mathbb{V}_h \times \mathbb{Z}_{ad,h} \}
\end{equation}
subject to the \emph{discrete state equation}
\begin{equation}\label{eq:weak_st_eq_discrete}
\mathcal{A}( u_h, v_h)+\int_{\Omega_h} a(x,u_h(x)) v_h(x) \mathrm{d}x = \int_{\Omega_h} z_h(x) v_h(x) \mathrm{d}x \quad \forall v \in \mathbb{V}_h.
\end{equation}
Here, $J_h: \mathbb{V}_h \times \mathbb{Z}_{ad,h} \ni (u_h,z_h) \mapsto J_h(u_h,z_h):= \int_{\Omega_h} L(x,u_h(x))\mathrm{d}x + \frac{\alpha}{2} \| z_h \|^2_{L^2(\Omega_h)} \in \mathbb{R}$.

We present the following result.

\begin{theorem}[optimal pair and optimality system]
\label{thm:existence_discrete_control}
Let $n \geq 2$ and $s \in (0,1)$. Assume that \textnormal{\ref{A1}}--\textnormal{\ref{A3}} and \textnormal{\ref{B1}}--\textnormal{\ref{B2}} hold. Thus, the discrete optimal control problem \eqref{eq:min_discrete}--\eqref{eq:weak_st_eq_discrete} admits at least one solution $\bar{z}_h \in  \mathbb{Z}_{ad,h}$. In addition, if $\bar{z}_h$ denotes a local minimum for \eqref{eq:min_discrete}--\eqref{eq:weak_st_eq_discrete}, then the triple $(\bar u_h, \bar p_h, \bar z_h) \in \mathbb{V}_h \times \mathbb{V}_h \times \mathbb{Z}_{ad,h}$, with $\bar u_h$ and $\bar p_h$ being the associated optimal state and adjoint state, respectively, satisfies 
\begin{align}
  \mathcal{A} ( \bar u_h,v_h)  +  ( a(\cdot, \bar u_h), v_h )_{L^2(\Omega_h)} & = (\bar z_h, v_h)_{L^2(\Omega_h)}  
\quad
\forall v_h \in \mathbb{V}_h,
 \label{eq:optimal_state_discrete}
\\
\mathcal{A}(v_h,\bar p_h) +  \left(  \tfrac{\partial a}{\partial u}(\cdot, \bar u_h) \bar p_h, v_h \right)_{L^2(\Omega_h)}  & = \left( \tfrac{\partial L}{\partial u}(\cdot,\bar u_h), v_h \right)_{L^2(\Omega_h)}
\quad
\forall v_h \in \mathbb{V}_h,
 \label{eq:optimal_adjoint_state_discrete}
\end{align}
and the variational inequality
\begin{equation}
(\bar p_h + \alpha \bar z_h, z_h - \bar z_h)_{L^2(\Omega_h)} \geq 0 
\end{equation}
for every $z_h \in \mathbb{Z}_{ad,h}$.
\end{theorem}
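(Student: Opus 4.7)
The plan proceeds in two parts, mirroring the continuous theory developed in Section~\ref{sec:optimal_control_problem}. For existence, I would first note that $\mathbb{Z}_{\mathrm{ad},h}$ is a nonempty, closed, bounded subset of the finite--dimensional space $\mathbb{Z}_h$, hence compact. The well--posedness argument for \eqref{eq:discrete_semilinear_pde} given in Section~\ref{sec:fem} defines a discrete control--to--state map $\mathcal{S}_h : \mathbb{Z}_h \to \mathbb{V}_h$ satisfying a uniform stability bound. To verify continuity of $\mathcal{S}_h$, I would take $z_h^k \to z_h$ in $\mathbb{Z}_h$, use the uniform bound on $u_h^k := \mathcal{S}_h(z_h^k)$ together with finite dimensionality of $\mathbb{V}_h$ to extract a subsequence converging to some $u_h^\star \in \mathbb{V}_h$, and pass to the limit in \eqref{eq:weak_st_eq_discrete} via the Carath\'eodory property of $a$ and Lebesgue dominated convergence (\textnormal{\ref{A3}} supplies the needed pointwise bound on any $L^\infty$--bounded set); uniqueness of solutions identifies $u_h^\star = \mathcal{S}_h(z_h)$. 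Continuity of $j_h(z_h) := J_h(\mathcal{S}_h(z_h),z_h)$ then follows from \textnormal{\ref{B1}}--\textnormal{\ref{B2}}, dominated convergence, and continuity of the quadratic term. Weierstrass' theorem yields a global minimizer $\bar z_h \in \mathbb{Z}_{\mathrm{ad},h}$.

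For the optimality system, the scheme parallels Theorems~\ref{thm:properties_C_to_S} and~\ref{thm:optimality_cond}. I would first establish that $\mathcal{S}_h$ is of class $C^2$ as a map $\mathbb{Z}_h \to \mathbb{V}_h$ by the implicit function theorem. To this end, define $\mathfrak{F}_h : \mathbb{V}_h \times \mathbb{Z}_h \to \mathbb{V}_h^\ast$ by
\begin{equation*}
\langle \mathfrak{F}_h(u_h,z_h), v_h \rangle := \mathcal{A}(u_h,v_h) + (a(\cdot,u_h), v_h)_{L^2(\Omega_h)} - (z_h, v_h)_{L^2(\Omega_h)}.
\end{equation*}
Assumptions \textnormal{\ref{A1}}--\textnormal{\ref{A3}} yield $\mathfrak{F}_h \in C^2$; we have $\mathfrak{F}_h(\mathcal{S}_h z_h, z_h) = 0$; and the partial derivative $\partial_{u} \mathfrak{F}_h(u_h,z_h)$ is an isomorphism on $\mathbb{V}_h$ by the coercivity of $\mathcal{A}$ on $\tilde H^s(\Omega) \supset \mathbb{V}_h$ and the sign condition \textnormal{\ref{A2}}. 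Consequently, $\phi_h := \mathcal{S}_h'(\bar z_h) h_h \in \mathbb{V}_h$ is characterized as the unique solution of the discrete linearized equation
\begin{equation*}
\mathcal{A}(\phi_h, v_h) + \left(\tfrac{\partial a}{\partial u}(\cdot,\bar u_h) \phi_h, v_h\right)_{L^2(\Omega_h)} = (h_h, v_h)_{L^2(\Omega_h)} \quad \forall v_h \in \mathbb{V}_h.
\end{equation*}

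With $\mathcal{S}_h \in C^2$ in hand, \textnormal{\ref{B1}}--\textnormal{\ref{B2}} and the chain rule give $j_h \in C^2(\mathbb{Z}_h; \mathbb{R})$ with
\begin{equation*}
j_h'(\bar z_h) h_h = \left(\tfrac{\partial L}{\partial u}(\cdot,\bar u_h), \mathcal{S}_h'(\bar z_h) h_h\right)_{L^2(\Omega_h)} + \alpha (\bar z_h, h_h)_{L^2(\Omega_h)}.
\end{equation*}
Local optimality of $\bar z_h$ yields $j_h'(\bar z_h)(z_h - \bar z_h) \geq 0$ for every $z_h \in \mathbb{Z}_{\mathrm{ad},h}$. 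I would then introduce $\bar p_h \in \mathbb{V}_h$ as the unique solution of \eqref{eq:optimal_adjoint_state_discrete}, whose well--posedness follows from the same coercivity--plus--monotonicity argument as used for $\partial_u \mathfrak{F}_h$. Testing the adjoint equation \eqref{eq:optimal_adjoint_state_discrete} with $\phi_h := \mathcal{S}_h'(\bar z_h)(z_h - \bar z_h)$ and the linearized state equation with $\bar p_h$, then subtracting, yields the identity $\left(\tfrac{\partial L}{\partial u}(\cdot,\bar u_h), \phi_h\right)_{L^2(\Omega_h)} = (z_h - \bar z_h, \bar p_h)_{L^2(\Omega_h)}$; plugging this into the first--order inequality produces the claimed variational inequality.

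The main subtlety, in my view, is verifying invertibility of $\partial_u \mathfrak{F}_h$ and well--posedness of the discrete adjoint with the variable coefficient $\tfrac{\partial a}{\partial u}(\cdot,\bar u_h) \geq 0$. Both reduce to coercivity of $\mathcal{A}$ on $\mathbb{V}_h \subset \tilde H^s(\Omega)$ combined with the sign property guaranteed by \textnormal{\ref{A2}}, so no essentially new arguments beyond those already employed in Sections~\ref{sec:fem} and~\ref{sec:fem_adjoint} are required; the entire plan is a structural transcription of the continuous theory to the finite--dimensional discrete setting, where extraction of convergent subsequences and the implicit function theorem become routine.
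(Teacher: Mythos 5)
Your proposal is correct and is precisely the route the paper intends: its proof of this theorem consists of the single remark that one should transcribe the arguments of Theorems \ref{thm:existence_control} and \ref{thm:optimality_cond} to the finite-dimensional setting, which is exactly what you carry out (compactness of $\mathbb{Z}_{ad,h}$ plus continuity of the discrete control-to-state map for existence; implicit function theorem, chain rule, and the adjoint-state duality identity for the optimality system). No gaps.
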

\begin{proof}
The proof follows from the finite dimensional analog of the arguments elaborated in the proof of Theorems \ref{thm:existence_control} and \ref{thm:optimality_cond}. For brevity, we skip details.
\end{proof}
\subsection{Convergence of discretizations}
\label{sec:convergence}
We begin with the following convergence result: \emph{a sequence $\{ \bar z_h \}_{h>0}$ of global solutions of the discrete optimal control problems \eqref{eq:min_discrete}--\eqref{eq:weak_st_eq_discrete} admits subsequences that converge, as $h \downarrow 0$, to global solutions of the continuous optimal control problem \eqref{eq:min}--\eqref{eq:weak_st_eq}.}
 
\begin{theorem}[convergence]
Let $n \geq 2$ and $s \in (0,1)$. Let $\Omega$ be a Lipschitz polytope satisfying the exterior ball condition. Assume that \ref{A1}--\ref{A3} and \ref{B1}--\ref{B2} hold. Assume that $a = a(x,u)$ satisfies, in addition, \eqref{eq:assumption_on_a_phi} and that $L$ satisfies, in addition, for all $\mathfrak{m}>0$, the estimate $|\frac{\partial L}{\partial u}(x,u)| \leq C_{\mathfrak{m}}$ for a.e.~$x \in \Omega$ and $u \in [-\mathfrak{m},\mathfrak{m}]$.
Let $\bar z_h$, for every $h>0$, be a global solution of the discrete optimal control problem. Then, there exist nonrelabeled subsequences $\{ \bar z_h \}_{h>0}$ such that $\bar z_h \mathrel{\ensurestackMath{\stackon[1pt]{\rightharpoonup}{\scriptstyle\ast}}} \bar{z}$ as $h \downarrow 0$, in $L^{\infty}(\Omega)$, with $\bar z$ being a global solution of \eqref{eq:min}--\eqref{eq:weak_st_eq}. In addition, we have
\begin{equation}
\label{eq:convergence}
\| \bar z - \bar z_h \|_{L^{2}(\Omega)} \rightarrow 0,
\qquad
j_{h}( \bar z_h) \rightarrow j(\bar z), 
\end{equation}
as $h \downarrow 0$.
\label{thm:convergence}
\end{theorem}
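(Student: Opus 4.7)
The plan is to adapt the classical direct-method argument for minimization problems to the present fractional, polytopal setting, exploiting the convergence results already established for the state equation (Proposition \ref{eq:proposition_convergence} together with Remark \ref{rem:polytopes_state_convergence}).

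First I would extract a weakly-$*$ convergent subsequence. Since $\bar z_h \in \mathbb{Z}_{\mathrm{ad},h} \subset \mathbb{Z}_{\mathrm{ad}}$ for every $h>0$, the sequence $\{\bar z_h\}_{h>0}$ is uniformly bounded in $L^{\infty}(\Omega)$, so a nonrelabeled subsequence satisfies $\bar z_h \mathrel{\ensurestackMath{\stackon[1pt]{\rightharpoonup}{\scriptstyle\ast}}} \bar z$ in $L^{\infty}(\Omega)$; the weak-$*$ closedness of the (convex) admissible set yields $\bar z \in \mathbb{Z}_{\mathrm{ad}}$. Setting $\bar u = \mathcal{S}\bar z$, Remark \ref{rem:polytopes_state_convergence} applied with $f_h = \bar z_h$, $f = \bar z$ provides $\bar u_h \rightarrow \bar u$ in $L^{\mathfrak t}(\Omega)$ for every $\mathfrak t \leq 2n/(n-2s)$; in particular the convergence holds in $L^2(\Omega)$ and a.e.~along a further subsequence.

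Next I would establish global optimality of $\bar z$ by a recovery-sequence argument. For an arbitrary $z \in \mathbb{Z}_{\mathrm{ad}}$, let $\Pi_h z \in \mathbb{Z}_{\mathrm{ad},h}$ denote the cellwise integral mean of $z$ on $\T_h$; this projection respects the pointwise bounds $\mathfrak{a}\leq \Pi_h z\leq \mathfrak{b}$ and satisfies $\Pi_h z \rightarrow z$ in $L^2(\Omega)$, hence weakly in $L^r(\Omega)$. Applying Remark \ref{rem:polytopes_state_convergence} again, the associated discrete states $\mathcal{S}_h(\Pi_h z)$ converge strongly in $L^2(\Omega)$ (and a.e.) to $\mathcal{S}z$. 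Using the local boundedness hypothesis on $\partial L/\partial u$ together with \ref{B1}, dominated convergence then yields $\int_\Omega L(x,\mathcal{S}_h(\Pi_h z))\,\mathrm{d}x \rightarrow \int_\Omega L(x,\mathcal{S}z)\,\mathrm{d}x$, and therefore $j_h(\Pi_h z)\rightarrow j(z)$. Since $\bar z_h$ is a global discrete minimizer, $j_h(\bar z_h)\leq j_h(\Pi_h z)$, and letting $h\downarrow 0$ gives
\begin{equation*}
\limsup_{h\downarrow 0} j_h(\bar z_h) \;\leq\; j(z) \qquad \forall z \in \mathbb{Z}_{\mathrm{ad}}.
\end{equation*}
On the other hand, the strong $L^2$ convergence $\bar u_h\rightarrow \bar u$ (again combined with the local bound on $\partial L/\partial u$) yields $\int_\Omega L(x,\bar u_h)\,\mathrm{d}x \rightarrow \int_\Omega L(x,\bar u)\,\mathrm{d}x$, while the weak $L^2$ lower semicontinuity of $\|\cdot\|_{L^2(\Omega)}^2$ gives $\liminf_h \tfrac{\alpha}{2}\|\bar z_h\|_{L^2(\Omega)}^2 \geq \tfrac{\alpha}{2}\|\bar z\|_{L^2(\Omega)}^2$. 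Combining, $\liminf_h j_h(\bar z_h) \geq j(\bar z)$, which together with the previous inequality at $z = \bar z$ proves $j(\bar z)\leq j(z)$ for every $z\in\mathbb{Z}_{\mathrm{ad}}$, i.e., $\bar z$ is a global minimum, and also $j_h(\bar z_h)\rightarrow j(\bar z)$.

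Finally, from the convergence of functional values and of the $L$-integrals, $\tfrac{\alpha}{2}\|\bar z_h\|_{L^2(\Omega)}^2 \rightarrow \tfrac{\alpha}{2}\|\bar z\|_{L^2(\Omega)}^2$; combined with the weak convergence $\bar z_h\rightharpoonup \bar z$ in $L^2(\Omega)$ (which follows from the weak-$*$ convergence in $L^\infty$ and the boundedness of $\Omega$), the Radon--Riesz property in the Hilbert space $L^2(\Omega)$ yields $\bar z_h \rightarrow \bar z$ strongly in $L^2(\Omega)$. The main obstacle I anticipate is the construction and properties of the recovery sequence $\Pi_h z \in \mathbb{Z}_{\mathrm{ad},h}$: one must choose a projection that simultaneously (i) preserves the pointwise bounds $[\mathfrak{a},\mathfrak{b}]$ cellwise, (ii) converges strongly in $L^2$ to $z$ for every $z\in L^\infty(\Omega)$, and (iii) interacts well with the nonlinear term $a(\cdot,\mathcal{S}_h \Pi_h z)$ via the convergence machinery of Proposition \ref{eq:proposition_convergence}; the cellwise mean meets all three requirements, but verifying (iii) relies on the Lipschitz assumption \eqref{eq:assumption_on_a_phi} encoded in the hypotheses of the theorem.
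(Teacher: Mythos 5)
Your proof is correct and follows the same direct-method skeleton as the paper (weak-$*$ compactness, lower semicontinuity for the $\liminf$, a recovery sequence for the $\limsup$, Radon--Riesz for the strong $L^2$ convergence), but the recovery-sequence step takes a genuinely different route. The paper compares $j_h(\bar z_h)$ only with $j_h(\tilde z_h)$ for a \emph{single} continuous global minimizer $\tilde z$, and to pass to the limit it first establishes $\tilde z \in C^s(\overline\Omega)$ --- via the exterior ball condition, the $C^s(\mathbb{R}^n)$ regularity of the associated adjoint state $\tilde p$, and the projection formula \eqref{eq:projection_control} --- so that $\|\tilde z - \tilde z_h\|_{L^\infty(\Omega_h)} \to 0$ and hence $j_h(\tilde z_h)\to j(\tilde z)$. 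You instead take an \emph{arbitrary} $z \in \mathbb{Z}_{ad}$ and use only the $L^2$-convergence of the cellwise means $\Pi_h z \to z$, which holds for every admissible control with no regularity whatsoever; feeding the resulting weak convergence $\Pi_h z \rightharpoonup z$ in $L^r(\Omega)$ into Proposition \ref{eq:proposition_convergence} gives convergence of the discrete states and the stronger inequality $\limsup_h j_h(\bar z_h) \leq j(z)$ for all $z\in\mathbb{Z}_{ad}$. This buys you a proof that dispenses entirely with the exterior ball condition and the Hölder-regularity machinery; what the paper's route buys is a single explicit comparison function with uniform convergence of its means. One shared caveat you should make explicit: in both arguments the passage $\int_\Omega L(x,u_h)\,\mathrm{d}x \to \int_\Omega L(x,u)\,\mathrm{d}x$ from mere $L^2$-convergence of states exploits the local bound $|\partial L/\partial u|\leq C_{\mathfrak m}$ on $[-\mathfrak m,\mathfrak m]$, and therefore requires the uniform $L^\infty$ bound \eqref{eq:u_h_bounded} on the discrete states, which the paper assumes implicitly (cf.\ item (vi) of the introduction) and which your proposal also uses without stating.
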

\begin{proof}
Since $\{ \bar z_h \}_{h>0}$ is uniformly bounded in $L^{\infty}(\Omega)$, we deduce the existence of a nonrelabeled subsequence $\{ \bar z_h \}_{h>0}$ such that $\bar z_h \mathrel{\ensurestackMath{\stackon[1pt]{\rightharpoonup}{\scriptstyle\ast}}} \bar{z}$ in $L^{\infty}(\Omega)$ as $h \downarrow 0$. In what follow, we prove that $\bar z$ is a global solution of the continuous optimal control problem and that $j_h(\bar z_h) \rightarrow j(\bar z)$ as $h \downarrow 0$.

Let $\tilde z \in \mathbb{Z}_{ad}$ be a global solution of \eqref{eq:min}--\eqref{eq:weak_st_eq}. Define $\tilde p$ as the solution to \eqref{eq:adj_eq}, with $u$ replaced by $\tilde u := \mathcal{S} \tilde z$, and $\tilde z_h \in \mathbb{Z}_{ad,h}$ by $\tilde z_h|_{T} := \int_{T} \tilde z(x) \mathrm{d}x / |T|$ for $T \in \T_h$. Observe that, since \ref{A3} holds and $\tilde p, \partial L/\partial u(\cdot,\tilde{u}) \in L^{\infty}(\Omega)$, we deduce that $\partial L/\partial u(\cdot, \tilde u) - \partial a/\partial u(\cdot,\tilde u) \tilde p \in L^{\infty}(\Omega)$.  In view of the fact that $\Omega$ is Lipschitz and satisfies the exterior ball condition, we can thus invoke \cite[Proposition 1.1]{MR3168912} to obtain that $\tilde p \in C^s(\mathbb{R}^n)$. The projection formula \eqref{eq:projection_control} thus yields $\tilde z \in C^s(\overline \Omega)$. Consequently, $\| \tilde z - \tilde z_h \|_{L^{\infty}(\Omega_h)} \rightarrow 0$ as $h \downarrow 0$. Invoke that $\tilde z$ is a global solution of  \eqref{eq:min}--\eqref{eq:weak_st_eq} and that $\bar z_h$ corresponds to a global solution of the discrete control problem to arrive at
\[
j(\tilde z) 
\leq 
j(\bar z) 
\leq 
\liminf_{h \downarrow 0} j_h( \bar z_h ) 
\leq  
\limsup_{h \downarrow 0} j_h( \bar z_h )
\leq  
\limsup_{h \downarrow 0} j_h( \tilde z_h )
=
j( \tilde z).
\]
To obtain the last equality, we used that $\| \tilde z - \tilde z_h \|_{L^{\infty}(\Omega_h)} \rightarrow 0$ implies $j_h(\tilde z_h) \rightarrow j(\tilde z)$ as $h \downarrow 0$. We have thus proved that $\bar z$ is a global solution and $j_h(\bar z_h) \rightarrow j(\bar z)$ as $h \downarrow 0$.

We now prove that $\| \bar z - \bar z_h \|_{L^2(\Omega)} \rightarrow 0$ as $h \downarrow 0$. In view of Proposition \ref{eq:proposition_convergence}, 
we have that $\bar u_h \rightarrow \bar u$ in $L^{\mathfrak{q}}(\Omega)$, for $\mathfrak{q} \leq 2n/(n-2s)$, as $h \downarrow 0$. Consequently,
\[
\left| \int_{\Omega}L(x,\bar u(x))\mathrm{d}x -\int_{\Omega_h} L(x, \bar u_h(x)) \mathrm{d}x  \right| 
\rightarrow 0, \quad h \downarrow 0.
\]
In view of the convergence result $j_h(\bar z_h) \rightarrow j(\bar z)$, we can thus obtain
\[
\tfrac{\alpha}{2} \| \bar z_h \|^2_{L^2(\Omega_h)} \rightarrow \tfrac{\alpha}{2} \| \bar z\|^2_{L^2(\Omega)},
\quad
h \downarrow 0.
\]
This and the weak convergence $\bar z_h \rightharpoonup \bar{z}$ in $L^{2}(\Omega)$ 
imply that $ \bar z_h \rightarrow \bar z$ in $L^2(\Omega)$ as $h \downarrow 0$. This concludes the proof.
\end{proof}

We now prove a somehow reciprocal result: \emph{every strict local minimum of the continuous problem \eqref{eq:min}--\eqref{eq:weak_st_eq} can be approximated by local minima of the discrete optimal control problems.}

\begin{theorem}[convergence]
Let the assumptions of Theorem \ref{thm:convergence} hold.
Let $\bar z$ be a strict local minimum of problem \eqref{eq:min}--\eqref{eq:weak_st_eq}. Then,  there exists a sequence $\{ \bar z_h \}_{h>0}$ of local minima of the discrete optimal control problems such that
\begin{equation}
\label{eq:convergence_local_minima}
\| \bar z - \bar z_h \|_{L^{2}(\Omega)} \rightarrow 0,
\qquad
j_{h}( \bar z_h) \rightarrow j(\bar z), 
\end{equation}
as $h \downarrow 0$.
\label{thm:convergence_local_minima}
\end{theorem}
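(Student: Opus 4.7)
The plan is to adapt the classical Casas--Tröltzsch strategy: localize the discrete problems to a ball where $\bar z$ is the unique continuous minimizer, prove existence for the localized discrete problems, and then invoke Theorem \ref{thm:convergence} to get convergence to $\bar z$. The last step is to verify that, for $h$ small enough, the localized minimizers lie in the interior of the ball, so that they are genuine local minima of the original discrete problem.

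Concretely, since $\bar z$ is a strict local minimum, fix $\epsilon>0$ such that $j(\bar z) < j(z)$ for every $z \in B_{\epsilon}(\bar z)\cap \mathbb{Z}_{ad}$ with $z\neq \bar z$, where $B_\epsilon(\bar z)$ denotes the closed ball in $L^2(\Omega)$. For every $h>0$, consider the auxiliary discrete problem
\[
\min \{ j_h(z_h) : z_h \in \mathbb{Z}_{ad,h},\; \|z_h - \bar z\|_{L^2(\Omega)} \leq \epsilon \}.
\]
To see that the feasible set is nonempty for $h$ small, recall from the proof of Theorem \ref{thm:convergence} that $\bar z \in C^s(\overline \Omega)$ thanks to \eqref{eq:projection_control} and \cite[Proposition 1.1]{MR3168912}. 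Defining $\tilde z_h \in \mathbb{Z}_{ad,h}$ by $\tilde z_h|_T := |T|^{-1} \int_T \bar z(x)\,\mathrm{d}x$, we obtain $\|\bar z - \tilde z_h\|_{L^\infty(\Omega_h)} \to 0$, hence $\tilde z_h$ is admissible for every sufficiently small $h$. Since $\mathbb{Z}_{ad,h}$ is finite dimensional and the localized admissible set is closed and bounded, a minimum $\bar z_h$ exists.

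Next, I mimic Theorem \ref{thm:convergence}. The sequence $\{\bar z_h\}_{h>0}$ is uniformly bounded in $L^{\infty}(\Omega)$, so up to a subsequence $\bar z_h \mathrel{\ensurestackMath{\stackon[1pt]{\rightharpoonup}{\scriptstyle\ast}}} \tilde z$ in $L^\infty(\Omega)$; since the closed $L^2$-ball is weakly closed, $\tilde z \in B_\epsilon(\bar z)\cap \mathbb{Z}_{ad}$. Using the convergence $\tilde z_h \to \bar z$ in $L^2(\Omega)$ and the same weak lower semicontinuity and state--equation convergence arguments as in Theorem \ref{thm:convergence} (based on Proposition \ref{eq:proposition_convergence}), we deduce
\[
j(\tilde z) \leq \liminf_{h \downarrow 0} j_h(\bar z_h) \leq \limsup_{h \downarrow 0} j_h(\bar z_h) \leq \limsup_{h \downarrow 0} j_h(\tilde z_h) = j(\bar z).
\]
Because $\bar z$ is the unique minimizer of $j$ on $B_\epsilon(\bar z)\cap \mathbb{Z}_{ad}$, this forces $\tilde z = \bar z$, and uniqueness of the limit upgrades the weak-$*$ convergence to hold for the whole sequence. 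The same chain of inequalities then yields $j_h(\bar z_h) \to j(\bar z)$, and the strong $L^2$-convergence $\bar z_h \to \bar z$ follows verbatim from the argument at the end of the proof of Theorem \ref{thm:convergence} (convergence of the quadratic norm combined with weak convergence in a Hilbert space).

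Finally, since $\|\bar z - \bar z_h\|_{L^2(\Omega)} \to 0$, for every sufficiently small $h$ we have $\|\bar z - \bar z_h\|_{L^2(\Omega)} < \epsilon$, i.e.\ $\bar z_h$ lies in the interior of the $\epsilon$-ball. Any $z_h \in \mathbb{Z}_{ad,h}$ sufficiently close to $\bar z_h$ in $L^2(\Omega)$ then also satisfies $\|z_h - \bar z\|_{L^2(\Omega)} \leq \epsilon$, so $j_h(\bar z_h)\leq j_h(z_h)$ by the minimizing property in the localized problem. Therefore $\bar z_h$ is a local minimum of the original discrete problem \eqref{eq:min_discrete}--\eqref{eq:weak_st_eq_discrete}, completing the proof. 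The main technical obstacle is the identification $\tilde z = \bar z$ of the limit in the localized problem; this is precisely where the \emph{strict} local minimum hypothesis together with the availability of an $L^2$-approximating sequence $\tilde z_h \to \bar z$ (which in turn relies on the Hölder regularity of $\bar z$ established via \eqref{eq:projection_control}) plays the crucial role.
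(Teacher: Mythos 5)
Your proposal is correct and follows essentially the same route as the paper's own proof: localize the discrete problems to the $\epsilon$-ball where $\bar z$ is the unique minimizer, establish nonemptiness of the feasible set via the H\"older regularity of $\bar z$ and local averaging, identify the limit of the localized discrete minimizers with $\bar z$ using the argument of Theorem \ref{thm:convergence}, and observe that the ball constraint is eventually inactive. No gaps.
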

\begin{proof}
Since $\bar z$ is a strict local minimum for problem \eqref{eq:min}--\eqref{eq:weak_st_eq}, we deduce the existence of $\epsilon >0$ such that the minimization problem
\begin{equation}
\min \{ j(z):  z \in \mathbb{Z}_{ad}~\mathrm{and}~\| \bar z - z \|_{L^2(\Omega)} \leq \epsilon \}
\label{eq:local_continuous_problem}
\end{equation}
admits a unique solution $\bar z \in \mathbb{Z}_{ad}$. On the other hand, let us introduce, for $h>0$, the discrete problem
\begin{equation}
\min \{ j_h(z_h):  z_h \in \mathbb{Z}_{ad,h}~\mathrm{and}~\| \bar z - z_h \|_{L^2(\Omega)} \leq \epsilon \}.
\label{eq:local_discrete_problem}
\end{equation}
To conclude that problem \eqref{eq:local_discrete_problem} admits at least a solution, we need to verify that the set where the minimum is sought is nonempty; notice that such a set is compact. To accomplish this task, we define, as in the proof of Theorem \ref{thm:convergence}, $\hat z_h \in \mathbb{Z}_{ad,h}$ by $\hat z_h |_T := \int_{T} \bar z(x) \mathrm{d}x / |T|$ for $T \in \T_h$. 
Since $\bar z \in C^s(\bar \Omega)$, we have that $\| \bar z - \hat z_h \|_{L^{\infty}(\Omega)} \rightarrow 0$ as $h \downarrow 0$. As a result, if $h$ is sufficiently small, $\hat z_h \in \mathbb{Z}_{ad,h}$ is such that $\| \bar z - \hat z_h \|_{L^2(\Omega)} \leq \epsilon$. We can thus conclude the existence of $h_{\star} > 0$ such that problem \eqref{eq:local_discrete_problem} admits at least a solution for $h \leq h_{\star}$.

Let $h \leq h_{\star}$ and let $\bar z_h$ be a global solution to problem \eqref{eq:local_discrete_problem}. Since $\{ \bar z_h \}_{0 < h \leq h_{\star}}$ is bounded in $L^{\infty}(\Omega)$, there exist a subsequence $\{ \bar z_{h_k} \}_{k= 1}^{\infty}$ of $\{ \bar z_h \}_{0 < h \leq h_{\star}}$ such that $\bar z_{h_k} \mathrel{\ensurestackMath{\stackon[1pt]{\rightharpoonup}{\scriptstyle\ast}}} \tilde{z}$ in $L^{\infty}(\Omega)$ as $k \uparrow \infty$. Proceed as in the proof of Theorem \ref{thm:convergence} to obtain that $\tilde z$ is a solution to the continuous problem \eqref{eq:local_continuous_problem} and $\bar z_{h_k} \rightarrow \tilde z$ in $L^2(\Omega)$ as $k \uparrow \infty$. Since problem \eqref{eq:local_continuous_problem} admits a unique solution, we must have $\tilde z = \bar z$ and $\bar z_h \rightarrow \bar z$ in $L^2(\Omega)$ as $h \downarrow 0$. Observe that, for $h$ sufficiently small, the constraint $\| \bar z - \bar z_h \|_{L^2(\Omega)} \leq \epsilon$ is not active in problem \eqref{eq:local_discrete_problem}. Consequently, $\bar z_h$ solves the original discrete problem. The remaining convergence property in \eqref{eq:convergence_local_minima} follows from the arguments elaborated in the proof of Theorem \ref{thm:convergence}. This concludes the proof.
\end{proof}

\begin{remark}[curved domains]\rm
The results of Theorems \ref{thm:convergence} and \ref{thm:convergence_local_minima} can also be obtained for curved domains. In fact, to prove Theorem \ref{thm:convergence} it suffices to consider a uniformly bounded extension $\{ \tilde z_h \}_{h>0} \subset L^{\infty}(\Omega)$ of $\{ z_h \}_{h>0} \subset L^{\infty}(\Omega_h)$. On the other hand, to adapt the results of Theorem \ref{thm:convergence_local_minima} to a cuved setting, we extend discrete functions $z_h \in \mathbb{Z}_{ad,h}$, defined over $\Omega_h$, to $\Omega$ by setting $z_h(x) = \bar z(x)$ for $x \in \Omega\setminus \Omega_h$.
\end{remark}

\subsection{Error estimates}
\label{sec:error_estimates}
Let $\{ \bar z_h \} \in \mathbb{Z}_{ad,h}$ be a sequence of local minima of the discrete optimal control problems such that $\| \bar z - \bar z_h \|_{L^{2}(\Omega_h)} \rightarrow 0$ as $h \downarrow 0$; $\bar z$ being a local solution of the continuous problem \eqref{eq:min}--\eqref{eq:weak_st_eq}; see Theorems \ref{thm:convergence} and \ref{thm:convergence_local_minima} . The main goal of this section is to provide an error estimate for $\bar z - \bar z_h$ in $L^2(\Omega_h)$, namely
\begin{equation}
\label{eq:error_estimate_in L2_aux}
\| \bar z - \bar z_{h} \|_{L^2(\Omega_h)}\lesssim h^{\gamma},
\quad
\gamma = \min \left\{ 1, s +  \tfrac{1}{2} - \epsilon \right\},
\quad
\forall h \leq h_{\star}.
\end{equation}
Here, $\epsilon >0$ is arbitrarily small. In what follows, if necessary, we extend discrete functions $z_h \in \mathbb{Z}_{ad,h}$, defined over $\Omega_h$, to $\Omega$ by setting $z_h(x) = \bar z(x)$ for $x \in \Omega\setminus \Omega_h$. 

We begin with the following instrumental result.
\begin{theorem}[instrumental error estimate]
Let $n \in \{ 2,3 \}$ and $s > n/4$. Let $\Omega$ be a convex domain such that $\partial \Omega \in C^{\infty}$. Assume that \ref{A1}--\ref{A3}, \ref{B1}--\ref{B2}, and \ref{C1}--\ref{C2} hold. Assume, in addition, that \eqref{eq:assumption_on_a_phi} and \eqref{eq:u_h_bounded} hold. Let $\bar z \in \mathbb{Z}_{ad}$ satisfies the second order optimality condition \eqref{eq:second_order_suff}, or equivalently \eqref{eq:second_order_equivalent}. Let us assume that \eqref{eq:error_estimate_in L2_aux} is false. Then, there exists $h_{\star}>0$ such that
\begin{equation}
\label{eq:basic_estimate}
\tfrac{\mathfrak{C}}{2}\| \bar z - \bar z_{h} \|^2_{L^2(\Omega_h)} \leq \left[ j'(\bar z_h) - j'(\bar z) \right](\bar z_h - \bar z)
\end{equation}
for every $h \leq h_{\star}$, where $\mathfrak{C} = \min \{ \mu, \alpha \}$, $\mu$ is the constant appearing in \eqref{eq:second_order_equivalent}, and $\alpha$ denotes the regularization parameter.
\label{thm:instrumental_error_estiamate}
\end{theorem}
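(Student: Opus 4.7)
The plan is to argue by contradiction in the spirit of \cite{MR2583281,MR3586845}, translated to the present fractional setting. Because $s>n/4$, Proposition \ref{pro:diff_properties_j} gives $j\in C^{2}(L^{2}(\Omega))$, and the mean value theorem provides $\theta_{h}\in(0,1)$ such that, setting $\hat z_{h}:=\bar z+\theta_{h}(\bar z_{h}-\bar z)$,
\[
[j'(\bar z_{h})-j'(\bar z)](\bar z_{h}-\bar z)=j''(\hat z_{h})(\bar z_{h}-\bar z)^{2}.
\]
Writing $\rho_{h}:=\|\bar z-\bar z_{h}\|_{L^{2}(\Omega_{h})}$ and $v_{h}:=\rho_{h}^{-1}(\bar z_{h}-\bar z)$, it therefore suffices to show $j''(\hat z_{h})v_{h}^{2}\ge \mathfrak{C}/2$ for all sufficiently small $h$. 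Assume, toward a contradiction, that some subsequence (not relabeled) satisfies $j''(\hat z_{h})v_{h}^{2}<\mathfrak{C}/2$. Since $\|v_{h}\|_{L^{2}}=1$, up to a further subsequence $v_{h}\rightharpoonup v$ in $L^{2}(\Omega)$; moreover, Theorems \ref{thm:convergence} and \ref{thm:convergence_local_minima} yield $\bar z_{h}\to\bar z$ in $L^{2}(\Omega)$, hence $\hat z_{h}\to\bar z$, and combined with Proposition \ref{eq:proposition_convergence} and Theorem \ref{thm:error_estimates_adjoint_equation_2} this gives $\bar u_{h}\to\bar u$ and $\bar p_{h}\to\bar p$ in $L^{2}(\Omega)$.

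The crux is to verify $v\in C_{\bar z}$. The sign conditions \eqref{eq:sign_cond} transfer from $v_{h}$ to $v$ by weak closure of the nonnegative cone: on $\{\bar z=\mathfrak{a}\}$, $v_{h}\ge 0$ a.e., hence $v\ge 0$; analogously on $\{\bar z=\mathfrak{b}\}$. To obtain $\bar{\mathfrak{p}}(x)\neq 0\Rightarrow v(x)=0$, first note that \eqref{eq:var_ineq} tested against $\bar z_{h}\in\mathbb{Z}_{ad}$ yields $\int_{\Omega}\bar{\mathfrak{p}}\,v_{h}\,dx\ge 0$, and hence $\int_{\Omega}\bar{\mathfrak{p}}\,v\,dx\ge 0$ in the limit. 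The matching upper bound comes from testing the discrete variational inequality against the piecewise constant $L^{2}(\Omega_{h})$-projection $\pi_{h}\bar z$ of $\bar z$: using the orthogonality $(\bar z_{h},\bar z-\pi_{h}\bar z)_{L^{2}(\Omega_{h})}=0$ (valid because $\bar z_{h}$ is piecewise constant) and the bound
\[
(\bar p_{h},\bar z-\pi_{h}\bar z)_{L^{2}(\Omega_{h})}=(\bar p_{h}-\pi_{h}\bar p_{h},\bar z-\pi_{h}\bar z)_{L^{2}(\Omega_{h})}\lesssim h^{2\gamma},
\]
a consequence of the regularity $\bar p,\bar z\in H^{s+1/2-\epsilon}(\Omega)$ supplied by Theorem \ref{thm:regularity_space}, one arrives at $\int_{\Omega_{h}}(\bar p_{h}+\alpha\bar z_{h})\,v_{h}\,dx\le C\,h^{2\gamma}/\rho_{h}$. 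Here the hypothesis that \eqref{eq:error_estimate_in L2_aux} fails is essential: it forces $\rho_{h}/h^{\gamma}\to\infty$, whence $h^{2\gamma}/\rho_{h}\to 0$. Passing to the limit with the strong convergence $\bar p_{h}+\alpha\bar z_{h}\to\bar{\mathfrak{p}}$ in $L^{2}(\Omega)$ yields $\int_{\Omega}\bar{\mathfrak{p}}\,v\,dx\le 0$. Combined with the reverse inequality and the pointwise bound $\bar{\mathfrak{p}}\,v\ge 0$ a.e.\ (a direct consequence of \eqref{eq:derivative_j} and the sign conditions on $v$), this forces $\bar{\mathfrak{p}}\,v=0$ a.e., completing the proof that $v\in C_{\bar z}$.

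The contradiction is now obtained by the machinery of Theorem \ref{thm:suff_opt_cond}. The equivalent second-order condition \eqref{eq:second_order_equivalent} supplies $j''(\bar z)v^{2}\ge\mu\|v\|_{L^{2}}^{2}$, while the weak lower semicontinuity argument developed in Step 2 of the proof of Theorem \ref{thm:suff_opt_cond}—applied to the non-$\alpha$ terms of $j''(\hat z_{h})v_{h}^{2}$, which involve $\phi_{v_{h}}$ and thus converge strongly in $L^{\mathfrak{q}}(\Omega)$ for $\mathfrak{q}<2n/(n-2s)$ by compactness of $H^{s}\hookrightarrow L^{\mathfrak{q}}$—yields
\[
\liminf_{h\downarrow 0}\bigl[j''(\hat z_{h})v_{h}^{2}-\alpha\|v_{h}\|_{L^{2}}^{2}\bigr]\ge j''(\bar z)v^{2}-\alpha\|v\|_{L^{2}}^{2}.
\]
Using $\|v_{h}\|_{L^{2}}=1$ gives $\liminf j''(\hat z_{h})v_{h}^{2}\ge\mu\|v\|_{L^{2}}^{2}+\alpha(1-\|v\|_{L^{2}}^{2})\ge\min\{\mu,\alpha\}=\mathfrak{C}$, contradicting $j''(\hat z_{h})v_{h}^{2}<\mathfrak{C}/2$. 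The principal obstacle is the verification of $v\in C_{\bar z}$: it requires carefully coupling the continuous and discrete first-order systems through the projection $\pi_{h}$, and uses in an essential way both the Sobolev regularity of the optimal variables and the standing hypothesis that the rate asserted in \eqref{eq:error_estimate_in L2_aux} is violated.
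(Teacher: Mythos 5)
Your proof is correct and follows essentially the same route as the paper: normalize $\bar z_h-\bar z$, extract a weak limit $v$, prove $v\in C_{\bar z}$ by testing the discrete variational inequality with the piecewise constant projection of $\bar z$ and exploiting the failure of \eqref{eq:error_estimate_in L2_aux} to kill the projection error divided by $\rho_h$, and then pass to the limit in $j''(\hat z_h)v_h^2$ via \eqref{eq:second_order_equivalent} and the mean value theorem. The only (harmless) deviation is your orthogonality refinement $(\bar p_h-\pi_h\bar p_h,\bar z-\pi_h\bar z)\lesssim h^{2\gamma}$, whose justification would require controlling the discrete function $\bar p_h$ in $H^{\gamma}(\Omega)$ rather than just invoking the regularity of $\bar p$; this extra power of $h$ is unnecessary, since the cruder bound $\lesssim h^{\gamma}$ obtained from $\|\bar p_h\|_{L^2(\Omega)}\lesssim 1$ and $\|\bar z-\pi_h\bar z\|_{L^2(\Omega)}\lesssim h^{\gamma}$ already yields $h^{\gamma}/\rho_h\to 0$, which is exactly what the paper uses.
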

\begin{proof}
Since \eqref{eq:error_estimate_in L2_aux} is false, there exist sequences $\{ h_{\ell} \}_{\ell=1}^{\infty}$ and $\{ \bar z_{h_{\ell}} \}_{\ell=1}^{\infty}$ such that $h_{\ell} \downarrow 0$ as $\ell \uparrow \infty$ and $\| \bar z - \bar z_{h_{\ell}} \|_{L^2(\Omega_h)}/h_{\ell}^{\gamma} \rightarrow \infty$ as $h_{\ell} \downarrow 0$. In what follows, to simplify notation we omit the subindex $\ell$. 

Define $v_h:= (\bar z_{h} - \bar z)/ \| \bar z_h - \bar z \|_{L^2(\Omega)}$ and observe that, for every $h >0$, we have $\| v_h \|_{L^2(\Omega)} = 1$. Upon considering a subsequence, if necessary, we can assume that $v_h  \rightharpoonup v$ in $L^2(\Omega)$ as $h \downarrow 0$. Since the set of elements satisfying \eqref{eq:sign_cond} is weakly closed in $L^2(\Omega)$ and each $v_{h}$ satisfies \eqref{eq:sign_cond}, we conclude that $v$ satisfies \eqref{eq:sign_cond} as well. We now prove that $|\bar{\mathfrak{p}}(x)| > 0$ implies $v(x) = 0$; recall that $\bar{\mathfrak{p}} = \bar p + \alpha \bar z$. Define $\bar{\mathfrak{p}}_h:= \bar p_h + \alpha \bar z_h$. Observe that Proposition \ref{eq:proposition_convergence} and the arguments elaborated in the proof of Theorems \ref{thm:error_estimates_adjoint_equation} and \ref{thm:error_estimates_adjoint_equation_2} yield $\| \bar{\mathfrak{p}}  - \bar{\mathfrak{p}}_h \|_{L^2(\Omega)} \rightarrow 0$ as $h \downarrow 0$. As a result, we can thus arrive at
\begin{multline*}
\int_{\Omega} \bar{\mathfrak{p}}(x) v(x) \mathrm{d}x = \lim_{h \rightarrow 0} \int_{\Omega_{h}}  \bar{\mathfrak{p}}_{h}(x) v_{h}(x) \mathrm{d}x 
\\
=  \lim_{h \rightarrow 0} \frac{1}{ \| \bar z_{h} - \bar z \|_{L^2(\Omega)} } \left[ \int_{\Omega_{h}} \bar{\mathfrak{p}}_{h}(x) [ (\Pi_{h} \bar z(x) - \bar z(x))
+  (\bar z_{h}(x) - \Pi_{h} \bar z(x)) ]
\mathrm{d}x
\right],
\end{multline*}
where $\Pi_h: L^2(\Omega) \rightarrow \mathbb{Z}_h$ denotes the orthogonal projection operator onto piecewise constant functions over $\T_h$. Since $0 \leq j_{h}'( \bar z_{h}) ( \Pi_{h} \bar z - \bar z_{h} ) = \int_{\Omega_h} \bar{\mathfrak{p}}_{h}(x)  (\Pi_{h} \bar z(x) - \bar z_{h}(x)) \mathrm{d}x$, because $\Pi_{h} \bar z \in \mathbb{Z}_{ad,h}$, we have
\[
\int_{\Omega} \bar{\mathfrak{p}}(x) v(x) \mathrm{d}x 
\leq
\lim_{h \rightarrow 0} \frac{1}{ \| \bar z_{h} - \bar z \|_{L^2(\Omega)} } \left[ \int_{\Omega_{h}} \bar{\mathfrak{p}}_{h}(x)(\Pi_{h} \bar z(x) - \bar z(x))
\mathrm{d}x \right].
\]
Invoke the regularity results for $\bar z$ obtained in Theorem \ref{thm:regularity_space}, namely $\bar z \in H^{s+1/2-\epsilon}(\Omega)$, where 
$\epsilon >0$ is arbitrarily small, standard error estimates for $\Pi_h$, and $\lim_{h \rightarrow 0} \| \bar z - \bar z_{h} \|_{L^2(\Omega_h)}/ h^{\gamma} = \infty$ to obtain
$
\int_{\Omega} \bar{\mathfrak{p}}(x) v(x) \mathrm{d}x \leq 0.
$
In view of \eqref{eq:sign_cond}, we can thus conclude that $\int_{\Omega} | \bar{\mathfrak{p}}(x) v(x)| \mathrm{d}x = 0$ and thus that $|\bar{\mathfrak{p}}(x)| > 0$ implies $v(x) = 0$ for a.e~$x \in \Omega$. Consequently, $v \in C_{\bar z}$.

Invoke the mean value theorem to obtain
\begin{equation}
\left( j'(\bar z_h) - j'(\bar z) \right) (\bar z_h - \bar z) = j''( \bar z + \theta_h ( \bar z_h - \bar z ) ) (\bar z_h - \bar z)^2, \quad \theta_h \in (0,1).
\label{eq:aux_second_order_2}
\end{equation}
Define $\hat{z}_h := \bar z + \theta_h ( \bar z_h - \bar z )$, $u_{\hat{z}_h} = u(\hat{z}_h):= \mathcal{S} \hat{z}_h$, and $p_{\hat{z}_h} = p(\hat{z}_h)$ as the solution to \eqref{eq:adj_eq} with $u$ replaced by $u_{\hat{z}_h} $. Invoke \eqref{eq:charac_j2} to obtain 
\begin{equation}
\begin{aligned}
\lim_{h \rightarrow 0} j''(\hat z_h) v_{h}^2 
&
= \lim_{h \rightarrow 0}
\int_{\Omega}
\left(
\frac{\partial^2 L}{\partial u^2}(x, u_{\hat{z}_h} )
\phi_{v_h}^2
-
p_{\hat{z}_h} \frac{\partial^2 a}{\partial u^2}(x,u_{\hat{z}_h})\phi_{v_h}^2
+ 
\alpha v_{h}^2 
\right)
\mathrm{d}x
\\
& = 
\alpha
+
\int_{\Omega}
\left(
\frac{\partial^2 L}{\partial u^2}(x, \bar u)
\phi_{v}^2
-
\bar p \frac{\partial^2 a}{\partial u^2}(x,\bar u)\phi_{v}^2
\right)
\mathrm{d}x,
\end{aligned}
\end{equation}
where we have used 
$
u_{\hat{z}_h} \rightarrow \bar u
$
and
$
p_{\hat{z}_h} \rightarrow \bar p 
$
in $\tilde H^s(\Omega) \cap L^{\infty}(\Omega)$ and $\phi_{v_h} \rightharpoonup \phi_{v}$ in $\tilde H^s(\Omega)$; the latter implies that $\phi_{v_h} \rightarrow \phi_{v}$ in $L^{\mathfrak{q}}(\Omega)$ as $k \uparrow \infty$ for $\mathfrak{q} < 2n/(n-2s)$; see the proof of Theorem \ref{thm:suff_opt_cond} for details. Since
$\bar z$ satisfies \eqref{eq:second_order_suff}, Theorem \ref{thm:equivalent_opt_cond} yields
\begin{equation*}
\lim_{h \rightarrow 0} j''(\hat z_h) v_{h}^2  = \alpha + j''(\bar z)v^2 - \alpha \| v \|^2_{L^2(\Omega)} \geq \alpha + (\mu - \alpha) \| v \|^2_{L^2(\Omega)}.
\end{equation*}
Since $\| v \|_{L^2(\Omega)} \leq 1$, we can thus conclude that $\lim_{h \rightarrow 0} j''(\hat z_h) v_{h}^2 \geq \mathfrak{C}$, where $\mathfrak{C} = \min \{ \mu, \alpha \}$. As a result, there exists $h_{\star} >0$ such that, for every $h \leq h_{\star}$, we have
$
j''(\hat z_h) v_{h}^2 \geq \mathfrak{C}/2. 
$

In view of \eqref{eq:aux_second_order_2}, we can finally derive \eqref{eq:basic_estimate} and conclude the proof.
\end{proof}

We now provide an error estimate for the difference $\bar z - \bar z_h$ in $L^2(\Omega_h)$.

\begin{theorem}[error estimate for approximation of a control variable]
Let $n \in \{ 2,3 \}$ and $s > n/4$. Let $\Omega$ be a convex domain such that $\partial \Omega \in C^{\infty}$. Assume that \ref{A1}--\ref{A3}, \ref{B1}--\ref{B2}, and \ref{C1}--\ref{C2} hold.  Assume, in addition, that \eqref{eq:assumption_on_a_phi} and \eqref{eq:u_h_bounded} hold. Let $\bar z \in \mathbb{Z}_{ad}$ satisfies the second order optimality condition \eqref{eq:second_order_suff}, or equivalently \eqref{eq:second_order_equivalent}. Then, there exist $h_{\star} >0$ such that
\begin{equation}
\label{eq:error_estimate_in L2}
\| \bar z - \bar z_{h} \|_{L^2(\Omega_h)}\lesssim h^{\gamma}, 
\quad
\gamma = \min \left\{ 1, s +  \tfrac{1}{2} - \epsilon \right \},
\quad
\forall h \leq h_{\star},
\end{equation}
where $\epsilon >0$ is arbitrarily small.
\label{thm:error_estimate_control}
\end{theorem}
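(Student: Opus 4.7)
The plan is to argue by contradiction, using Theorem \ref{thm:instrumental_error_estiamate} as the starting point. Suppose \eqref{eq:error_estimate_in L2} fails. Then for all sufficiently small $h$ the coercivity estimate \eqref{eq:basic_estimate} is available, and the task reduces to bounding the right--hand side $[j'(\bar z_h)-j'(\bar z)](\bar z_h-\bar z)$ by an expression of the form $C h^{2\gamma}+C h^{\gamma}\|\bar z-\bar z_h\|_{L^2(\Omega_h)}$. Young's inequality would then absorb the linear term into the left--hand side, yielding $\|\bar z-\bar z_h\|_{L^2(\Omega_h)}\lesssim h^{\gamma}$ and the desired contradiction. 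If necessary, I extend $\bar z_h$ to $\Omega\setminus\Omega_h$ by $\bar z$, so that $\|\bar z-\bar z_h\|_{L^2(\Omega_h)}=\|\bar z-\bar z_h\|_{L^2(\Omega)}$.

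First, the continuous variational inequality (Theorem \ref{thm:optimality_cond}) gives $j'(\bar z)(\bar z_h-\bar z)\geq 0$, so the task further reduces to bounding $j'(\bar z_h)(\bar z_h-\bar z)$ from above. Let $\Pi_h\colon L^2(\Omega)\to\mathbb{Z}_h$ denote the $L^2$--projection and split
\[
j'(\bar z_h)(\bar z_h-\bar z)=[j'(\bar z_h)-j_h'(\bar z_h)](\bar z_h-\Pi_h\bar z)+j_h'(\bar z_h)(\bar z_h-\Pi_h\bar z)+j'(\bar z_h)(\Pi_h\bar z-\bar z).
\]
Since $\Pi_h\bar z\in\mathbb{Z}_{ad,h}$, the discrete variational inequality in Theorem \ref{thm:existence_discrete_control} forces the middle term to be nonpositive and I discard it.

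For the consistency term, I write $j'(\bar z_h)(v)-j_h'(\bar z_h)(v)=\int_{\Omega_h}[p(\bar z_h)-\bar p_h]v\,dx$ (plus a contribution on $\Omega\setminus\Omega_h$ controlled by $|\Omega\setminus\Omega_h|\lesssim h^2$), where $p(\bar z_h)$ is the continuous adjoint at control $\bar z_h$ and $\bar p_h$ is the discrete optimal adjoint. Applying Theorem \ref{thm:error_estimates_adjoint_equation_2} with $z=z_h=\bar z_h$, the term $\|z-z_h\|_{L^2(\Omega)}$ vanishes and yields $\|p(\bar z_h)-\bar p_h\|_{L^2(\Omega)}\lesssim h^{\vartheta+1/2-\epsilon}$; together with $\|\bar z_h-\Pi_h\bar z\|_{L^2(\Omega_h)}\leq\|\bar z_h-\bar z\|_{L^2(\Omega_h)}+\|\bar z-\Pi_h\bar z\|_{L^2(\Omega)}$ and the regularity $\bar z\in H^{s+1/2-\epsilon}(\Omega)$ from Theorem \ref{thm:regularity_space}, and the fact that $\vartheta+\tfrac12-\epsilon\geq\gamma$, this produces the desired contribution $h^{\gamma}\|\bar z-\bar z_h\|_{L^2(\Omega_h)}+h^{2\gamma}$.

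For the projection term, I expand
\[
j'(\bar z_h)(\Pi_h\bar z-\bar z)=\int_{\Omega}\bigl[p(\bar z_h)+\alpha\bar z_h\bigr](\Pi_h\bar z-\bar z)\,dx.
\]
The $\alpha\bar z_h$ contribution vanishes because $\bar z_h\in\mathbb{Z}_h$ and $\Pi_h\bar z-\bar z$ is $L^2$--orthogonal to $\mathbb{Z}_h$. For the rest, I insert $\pm\bar p$ and $\pm\Pi_h\bar p$ to obtain $\int_{\Omega}[p(\bar z_h)-\bar p](\Pi_h\bar z-\bar z)+\int_{\Omega}[\bar p-\Pi_h\bar p](\Pi_h\bar z-\bar z)$. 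The first integral is handled by Lipschitz dependence of the adjoint on the control, obtained by subtracting the equations defining $p(\bar z_h)$ and $\bar p$ and arguing as in the proof of Theorem \ref{thm:error_estimates_adjoint_equation_2}, giving $\|p(\bar z_h)-\bar p\|_{L^2(\Omega)}\lesssim\|\bar z_h-\bar z\|_{L^2(\Omega)}$ and hence a bound $\lesssim h^{\gamma}\|\bar z_h-\bar z\|_{L^2(\Omega_h)}$; the second integral is bounded by $\|\bar p-\Pi_h\bar p\|_{L^2(\Omega)}\|\Pi_h\bar z-\bar z\|_{L^2(\Omega)}\lesssim h^{2\gamma}$ via the $H^{s+1/2-\epsilon}$--regularity of $\bar p$ from Theorem \ref{thm:regularity_space}. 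Assembling Steps and applying Young's inequality yields $\|\bar z-\bar z_h\|_{L^2(\Omega_h)}\lesssim h^{\gamma}$, contradicting the failure of \eqref{eq:error_estimate_in L2}.

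The main obstacle is the consistency estimate: obtaining the sharp rate $h^{\vartheta+1/2-\epsilon}$ for $\|p(\bar z_h)-\bar p_h\|_{L^2(\Omega)}$ in the regime $z=z_h$ requires the full strength of the bootstrapped regularity provided by \ref{C1}--\ref{C2} (so that $p(\bar z_h)\in H^{s+1/2-\epsilon}(\Omega)$ uniformly in $h$), together with the curved--domain correction encoded by $|\Omega\setminus\Omega_h|\lesssim h^2$; the remaining ingredients are essentially bookkeeping atop the already--established pieces.
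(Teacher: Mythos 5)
Your proposal is correct and follows essentially the same route as the paper: contradiction via Theorem \ref{thm:instrumental_error_estiamate}, the split into the consistency term $[j_h'(\bar z_h)-j'(\bar z_h)](\Pi_h\bar z-\bar z_h)$ and the projection term $j'(\bar z_h)(\Pi_h\bar z-\bar z)$ after discarding the discrete variational inequality term, the adjoint error estimate with $z=z_h=\bar z_h$, and absorption by Young's inequality. The only (harmless) deviation is in the projection term, where you insert $\pm\bar p$ and $\pm\Pi_h\bar p$ and invoke Lipschitz stability of the adjoint in the control, whereas the paper uses the $L^2$-orthogonality of $\Pi_h$ directly on $p_{\bar z_h}$ together with its uniform $H^{s+1/2-\epsilon}$ bound; both yield the same rate.
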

\begin{proof}
We proceed by contradiction. Let us assume that \eqref{eq:error_estimate_in L2} is false so that we have at hand the instrumental error estimate of Theorem \ref{thm:instrumental_error_estiamate}.

We begin by observing that $j_h'(\bar z_h)(z_h - \bar z_h) \geq 0$ for every $z_h \in \mathbb{Z}_{ad,h}$ and $j'(\bar z)(\bar z_h - \bar z) \geq 0$. In view of these inequalities, we invoke \eqref{eq:basic_estimate} to obtain
\begin{equation}
\tfrac{\mathfrak{C}}{2}\| \bar z - \bar z_h \|_{L^2(\Omega_h)}^2 \leq [j_h'(\bar z_h) -j'(\bar z_h)](z_h - \bar z_h)  + j'(\bar z_h)(  z_h - \bar z) 
\label{eq:first_step}
\end{equation}
for every $z_h \in \mathbb{Z}_{ad,h}$. Let $\Pi_h: L^2(\Omega) \rightarrow \mathbb{Z}_h$ be the orthogonal projection operator onto piecewise constant functions over $\T_h$. Set $z_h = \Pi_h \bar z \in \mathbb{Z}_{ad,h}$ in \eqref{eq:first_step} to obtain
\[
\tfrac{\mathfrak{C}}{2}\| \bar z - \bar z_h \|_{L^2(\Omega_h)}^2 \leq [j_h'(\bar z_h) -j'(\bar z_h)]( \Pi_h \bar z - \bar z_h)  + j'(\bar z_h)( \Pi_h \bar z - \bar z)  =: \mathrm{I} + \mathrm{II}.
\]

We bound the term $\mathrm{II}$ as follows. First, standard properties of $\Pi_h$ reveal that 
\begin{multline}
\mathrm{II}_{\Omega_h} := (p_{\bar z_h} + \alpha \bar z_h, \Pi_h \bar z - \bar z)_{L^2(\Omega_h)} = ( p_{\bar z_h}, \Pi_h \bar z - \bar z)_{L^2(\Omega_h)}
\\
= (p_{\bar z_h} - \Pi_h p_{\bar z_h}, \Pi_h \bar z - \bar z)_{L^2(\Omega_h)} \lesssim h^{\gamma + \vartheta + \frac{1}{2}-\epsilon} | p_{\bar z_h} |_{H^{s+\frac{1}{2}-\epsilon}(\Omega)} | \bar z |_{H^{\gamma}(\Omega)},
\end{multline}
where $\vartheta = \min \{s,1/2-\epsilon \}$, $\gamma = \min \{ 1, s +  1/2 - \epsilon \}$, and $\epsilon >0$ is arbitrarily small. Here, $p_{\bar z_h} = p(\bar z_h)$ denotes the solution to \eqref{eq:adj_eq} with $u$ replaced by $\mathcal{S} \bar z_h$. Notice that Theorem \ref{thm:regularity_space} guarantees that $\bar z \in H^{\gamma}(\Omega)$.
On the other, on the basis of \ref{C1}--\ref{C2} and a bootstrap argument, Proposition \ref{pro:state_regularity_smooth} reveals that $p_{\bar z_h} \in H^{s+1/2-\epsilon}(\Omega)$, where $\epsilon >0$ is arbitrarily small. The remaining term $\mathrm{II}_{\Omega\setminus \Omega_h}$ vanishes:
\[
| \mathrm{II}_{\Omega\setminus \Omega_h}| = |(p_{\bar z_h} + \alpha \bar z_h, \Pi_h \bar z -\bar z   )_{L^2(\Omega\setminus \Omega_h)}| = 0.
\]

We now control $\mathrm{I}$. To accomplish this task, we first observe that
$
\mathrm{I} = (\bar p_h  - p_{\bar z_h} , \Pi_h \bar z - \bar z_h)_{L^2(\Omega)}.
$
Second, we split $\mathrm{I} = \mathrm{I}_{\Omega_h} + \mathrm{I}_{\Omega \setminus \Omega_h}$ and control $\mathrm{I}_{\Omega_h}$ as follows:
\begin{multline}
\mathrm{I}_{\Omega_h} := (\bar p_h  - p_{\bar z_h}, \Pi_h \bar z - \bar z_h)_{L^2(\Omega_h)} 
= (\bar p_h  - p_{\bar z_h}, \Pi_h (\bar z - \bar z_h) )_{L^2(\Omega_h)} 
\\
\lesssim 
\| \bar p_h  - p_{\bar z_h} \|_{L^2(\Omega)} \| \bar z - \bar z_h\|_{L^2(\Omega_h)}
\leq \frac{\mathfrak{C}}{4} \| \bar z - \bar z_h\|^2_{L^2(\Omega_h)}
+C h^{2(\vartheta + \frac{1}{2} - \epsilon)} | p_{\bar z_h} |^2_{H^{s+\frac{1}{2}-\epsilon}(\Omega)},
\end{multline}
where $\vartheta = \min \{ s, 1/2 - \epsilon \}$, $\epsilon >0$ is arbitrarily small and $C>0$. The control of $\| \bar p_h  - p_{\bar z_h} \|_{L^2(\Omega)}$ follows from Theorem \ref{thm:error_estimates_adjoint_equation}. Since discrete functions $z_h \in \mathbb{Z}_{ad,h}$ are extended to $\Omega$ upon setting $z_h(x) = \bar z(x)$ in $\Omega \setminus \Omega_h$, $\mathrm{I}_{\Omega \setminus \Omega_h} = 0$.

A collection of the derived estimates yields the bound $\| \bar z - \bar z_h \|_{L^2(\Omega)} \lesssim h^{\gamma}$, which is a contradiction. This concludes the proof.
\end{proof}

\begin{remark}[optimality]
\rm
The error estimate \eqref{eq:error_estimate_in L2} is, in terms of approximation, optimal for $s> \tfrac{1}{2}$ and suboptimal for $s \leq \tfrac{1}{2}$; suboptimality being dictated by the regularity properties obtained in Theorem \ref{thm:regularity_space}.
\end{remark}
\begin{remark}[$L^2(\Omega_h)$-error estimate]
\rm
To derive \eqref{eq:error_estimate_in L2}, the $L^2(\Omega_h)$-error estimate of Theorem \ref{thm:error_estimates_adjoint_equation} and the instrumental one obtained in Theorem \ref{thm:instrumental_error_estiamate} are essential. The latter strongly relies on the second order optimality conditions analyzed in Theorems \ref{thm:suff_opt_cond} and \ref{thm:equivalent_opt_cond}; observe inequality \eqref{eq:second_order_equivalent}. In view of the assumptions $n \in \{2,3\}$ and $s>n/4$, solutions to the state and adjoint equations belong to $\tilde H^s(\Omega) \cap L^{\infty}(\Omega)$ for corresponding forcing terms in $L^2(\Omega)$. Without these assumptions, global ones on $a$ and $L$ should be imposed in order to provide an analysis and an error estimate in $L^2(\Omega_h)$; observe the local nature in \ref{A1}--\ref{A3}, \ref{B1}--\ref{B2}, and \ref{C1}--\ref{C2}.
\end{remark}

We conclude the section with the following result.

\begin{theorem}[error estimates for approximation of state and adjoint variables]
Let the assumptions of Theorem \ref{thm:error_estimate_control} hold. Assume, in addition, that $\partial L/\partial u$ is locally Lipschitz with respect to the second variable. Then, there exist $h_{\star} >0$ such that
\begin{equation}
\label{eq:error_estimate_in L2_state_adjoint}
\| \bar u - \bar u_{h} \|_{s} \lesssim h^{\frac{1}{2}-\epsilon}, 
\qquad
\| \bar p - \bar p_{h} \|_{s} \lesssim h^{\frac{1}{2}-\epsilon},
\qquad
\forall h \leq h_{\star},
\end{equation}
where $\epsilon >0$ is arbitrarily small.
\label{thm:error_estimate_state_adjoint}
\end{theorem}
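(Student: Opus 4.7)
The plan is to reduce both bounds to three ingredients that are already at our disposal: the $L^2$-control estimate of Theorem \ref{thm:error_estimate_control}, the state FEM error estimate of Theorem \ref{thm:error_estimates_state_equation}, and the adjoint FEM error estimate of Theorem \ref{thm:error_estimates_adjoint_equation_2}. The key observation is that, since $\gamma = \min\{1, s+\tfrac12-\epsilon\} \geq \tfrac12 - \epsilon$ and Theorem \ref{thm:error_estimate_control} yields $\|\bar z - \bar z_h\|_{L^2(\Omega_h)} \lesssim h^{\gamma}$, any error bound of the form $h^{1/2-\epsilon} + \|\bar z - \bar z_h\|_{L^2(\Omega)}$ is automatically $\mathcal{O}(h^{1/2-\epsilon})$.

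For the state estimate I would proceed via a triangle inequality $\|\bar u - \bar u_h\|_s \leq \|\bar u - y\|_s + \|y - \bar u_h\|_s$, where $y \in \tilde H^s(\Omega)$ is the continuous solution of \eqref{eq:weak_semilinear_pde} with forcing $\bar z_h$ (the auxiliary variable introduced in \eqref{eq:mathsf_u}). On the one hand, $\bar u_h$ is exactly the finite element approximation of $y$; since $\bar z_h$ is piecewise constant it belongs to $H^{1/2-\epsilon}(\Omega)$, so Theorem \ref{thm:regularity_space_state_equation} yields $y \in H^{s+1/2-\epsilon}(\Omega)$ and then Theorem \ref{thm:error_estimates_state_equation} gives $\|y - \bar u_h\|_s \lesssim h^{1/2-\epsilon}$. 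On the other hand, $w := \bar u - y$ satisfies
\begin{equation*}
\mathcal{A}(w,v) + \big(a(\cdot,\bar u) - a(\cdot,y),v\big)_{L^2(\Omega)} = (\bar z - \bar z_h, v)_{L^2(\Omega)}\quad \forall v \in \tilde H^s(\Omega).
\end{equation*}
Testing with $v = w$ and using the monotonicity in \ref{A2} together with the continuous embedding $\tilde H^s(\Omega) \hookrightarrow L^2(\Omega)$ produces $\|w\|_s \lesssim \|\bar z - \bar z_h\|_{L^2(\Omega)}$. Theorem \ref{thm:error_estimate_control} then bounds the right-hand side by $h^{\gamma} \leq h^{1/2-\epsilon}$, yielding the first estimate in \eqref{eq:error_estimate_in L2_state_adjoint}.

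The adjoint estimate is essentially a direct invocation of Theorem \ref{thm:error_estimates_adjoint_equation_2} with $z = \bar z$, $z_h = \bar z_h$, $u = \bar u$, and $u_h = \bar u_h$: indeed, $\bar p$ solves \eqref{eq:adj_eq} with $u = \bar u$ and $\bar p_h$ solves \eqref{eq:optimal_adjoint_state_discrete}, which is \eqref{eq:discrete_adjoint_2} with $u_h = \bar u_h$. The added hypothesis that $\partial L/\partial u$ is locally Lipschitz is exactly what Theorem \ref{thm:error_estimates_adjoint_equation_2} requires, and \eqref{eq:u_h_bounded} is in force. Consequently
\begin{equation*}
\|\bar p - \bar p_h\|_s \lesssim h^{1/2-\epsilon} + \|\bar z - \bar z_h\|_{L^2(\Omega)} \lesssim h^{1/2-\epsilon},
\end{equation*}
where the last inequality uses Theorem \ref{thm:error_estimate_control} once more.

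I expect no serious obstacle beyond bookkeeping. The only subtlety is to make sure that, in the curved-domain setting, the extension of discrete objects to $\Omega\setminus\Omega_h$ is handled as in Section \ref{sec:fem_control}, so that the $L^2(\Omega_h)$ bound of Theorem \ref{thm:error_estimate_control} is indistinguishable from an $L^2(\Omega)$ bound in the arguments above; once that convention is adopted the two estimates follow by assembling the ingredients listed.
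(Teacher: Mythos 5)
Your proposal is correct and follows essentially the same route as the paper: the state bound via the triangle inequality through the auxiliary variable $y$ of \eqref{eq:mathsf_u} (with the stability bound $\|\bar u - y\|_s \lesssim \|\bar z - \bar z_h\|_{L^2(\Omega)}$, which you derive explicitly by testing with $w$ and using monotonicity, where the paper cites \cite[Theorem 4.16]{MR2583281}), and the adjoint bound by direct invocation of Theorem \ref{thm:error_estimates_adjoint_equation_2} combined with Theorem \ref{thm:error_estimate_control}.
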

\begin{proof}
Observe that $\| \bar u - \bar u_{h} \|_{s} \leq \| \bar u - y \|_{s} + \| y - \bar{u}_h \|_{s}$, where $y$ solves \eqref{eq:mathsf_u} with $z_h$ replaced by $\bar{z}_h$. The assumptions on $a$ combined with the arguments elaborated in the proof of \cite[Theorem 4.16]{MR2583281} and the error estimate \eqref{eq:error_estimate_semilinear_s_final} yield the estimates
\[
\| \bar u - y  \|_{s} \lesssim \| \bar z - \bar z_h \|_{L^2(\Omega)},
\qquad
\| y - \bar{u}_h \|_{s} \lesssim h^{\frac{1}{2}- \epsilon} \| y \|_{H^{s+\frac{1}{2}-\epsilon}(\Omega)}.
\]
In view of \eqref{eq:error_estimate_in L2}, we obtain the left-hand side estimate in \eqref{eq:error_estimate_in L2_state_adjoint}. The bound for the error committed within the approximation of $\bar p$ is the content of Theorem \ref{thm:error_estimates_adjoint_equation_2}.
\end{proof}

\section{Numerical experiments}
In this section, we illustrate the performance of the fully discrete scheme proposed in section \ref{sec:fem_control} and the sharpness of the error estimates derived in Theorems \ref{thm:error_estimate_control} and \ref{thm:error_estimate_state_adjoint}.

\subsection{Implementation}
\label{sec:implementation}
The implementation has been carried out in MATLAB on the basis of the finite element code devised in \cite{MR3679865}. To solve the discrete optimality system we have utilized a slight variation of the primal--dual active set strategy described in \cite[Section 2.12.4]{MR2583281}; on each iteration of such a strategy we solve the ensuing nonlinear system by using a Newton--type method.

Let us now construct an exact solution for the \emph{fractional semilinear optimal control problem} \eqref{eq:min}--\eqref{eq:weak_st_eq}. To accomplish this task, we slight modify the state equation \eqref{eq:state_equation} by incorporating an extra forcing term $f$: $(-\Delta)^s u + a(\cdot, u) = f + z$ in $\Omega$ and $u = 0$ in $\Omega^c$. Let $n=2$, $\Omega = B(0,1)$, $s \in (0,1)$, and $\alpha = 1$. We set the control bounds $\mathfrak{a}$ and $\mathfrak{b}$, the nonlinear term $a(\cdot,u)$, and the forcing term $f$ to be as follows:
\[
\mathfrak{a} = -0.8,
\qquad 
\mathfrak{b} = -0.1,
\qquad
a(\cdot,u) = u^3,
\qquad
f = 1 - \bar{u}^3 - \bar{z}.
\] 
Within this setting, the optimal state $\bar u$ is given by
\begin{equation}
\label{eq:example}
\bar {u}(x) = \frac{\Gamma(\frac{n}{2})}{2^{2s}\Gamma(\frac{n+2s}{2})\Gamma(1+s)} \left( 1 - |x|^{2}\right)^s_{+},
\quad
t^{+} = \max \{ t,0\}, \quad n = 2.
\end{equation}
Additionally, we set $L(\cdot,u) = \tfrac{1}{2}(u-u_d)^2$, where $u_d$ corresponds to a suitable \emph{desired state}. Within this particular framework, the optimal adjoint variable $\bar{p}$ satisfies
\begin{equation*}
\label{eq:adj_eq_numerics}
\bar{p} \in \tilde{H}^s(\Omega):
\quad
\mathcal{A}(v,\bar{p}) + \left(3\bar{u}^2 \bar{p},v\right)_{L^2(\Omega)} = \left(\bar{u}-u_d,v\right)_{L^2(\Omega)}
\quad \forall v \in \tilde H^s(\Omega).
\end{equation*}
We consider $u_d = \bar{u} - 3 \bar{u}^2 \bar{p} -1$ so that $\bar p$ is given by \eqref{eq:example}. Finally, the optimal control can be obtained in view of a projection formula: $\bar z(x) = \Pi_{[\mathfrak{a},\mathfrak{b}]}(-\bar p(x))$ for a.e.~$x \in \Omega$.

We notice that the previous construction of an exact solution is such that both $\bar u$ and $\bar p $ belong to $H^{s+1/2-\epsilon}(\Omega)$, for every $\epsilon >0$, so that it retains essential difficulties and singularities and allows us to evaluate experimental rates of convergence.

\subsection{Experimental rates of convergence}
We discretize $\Omega$ using a sequence of quasi-uniform meshes and study the performance of the fully discrete scheme described in section \ref{sec:fem_control} with $s \in \{0.1, 0.2, \cdots, 0.9\}$. In Figure \ref{fig:1} we present experimental rates of convergence for the error committed in the approximation of the optimal control, state, and adjoint variables. We observe that such experimental rates of convergence are in agreement with the error estimates derived in Theorems \ref{thm:error_estimate_control} and \ref{thm:error_estimate_state_adjoint}.

\begin{figure}[!ht]
\centering
\psfrag{s = 01}{{\normalsize $s = 0.1$}}
\psfrag{s = 02}{{\normalsize $s = 0.2$}}
\psfrag{s = 03}{{\normalsize $s = 0.3$}}
\psfrag{s = 04}{{\normalsize $s = 0.4$}}
\psfrag{s = 05}{{\normalsize $s = 0.5$}}
\psfrag{s = 06}{{\normalsize $s = 0.6$}}
\psfrag{s = 07}{{\normalsize $s = 0.7$}}
\psfrag{s = 08}{{\normalsize $s = 0.8$}}
\psfrag{s = 09}{{\normalsize $s = 0.9$}}
\psfrag{ndof -03}{{\normalsize $\mathsf{Ndof}^{-0.3}$}}
\psfrag{ndof -035}{{\normalsize $\mathsf{Ndof}^{-0.35}$}}
\psfrag{ndof -04}{{\normalsize $\mathsf{Ndof}^{-0.4}$}}
\psfrag{ndof -045}{{\normalsize $\mathsf{Ndof}^{-0.45}$}}
\psfrag{ndof -05}{{\normalsize $\mathsf{Ndof}^{-0.5}$}}
\psfrag{ndofs 05}{{\normalsize $\mathsf{Ndof}^{-0.5}$}}
\psfrag{ndofs -025}{{\normalsize $\mathsf{Ndof}^{-0.25}$}}
\psfrag{control}{{\normalsize Control }}
\psfrag{State}{{\normalsize State }}
\psfrag{Ad. state}{{\normalsize Adjoint }}
\begin{minipage}[c]{0.45\textwidth}\centering
\psfrag{control error}{\large{$\|\bar{z} - \bar{z}_h \|_{L^{2}(\Omega)}$}}
\includegraphics[trim={0 0 0 0},clip,width=4.5cm,height=3.8cm,scale=0.40]{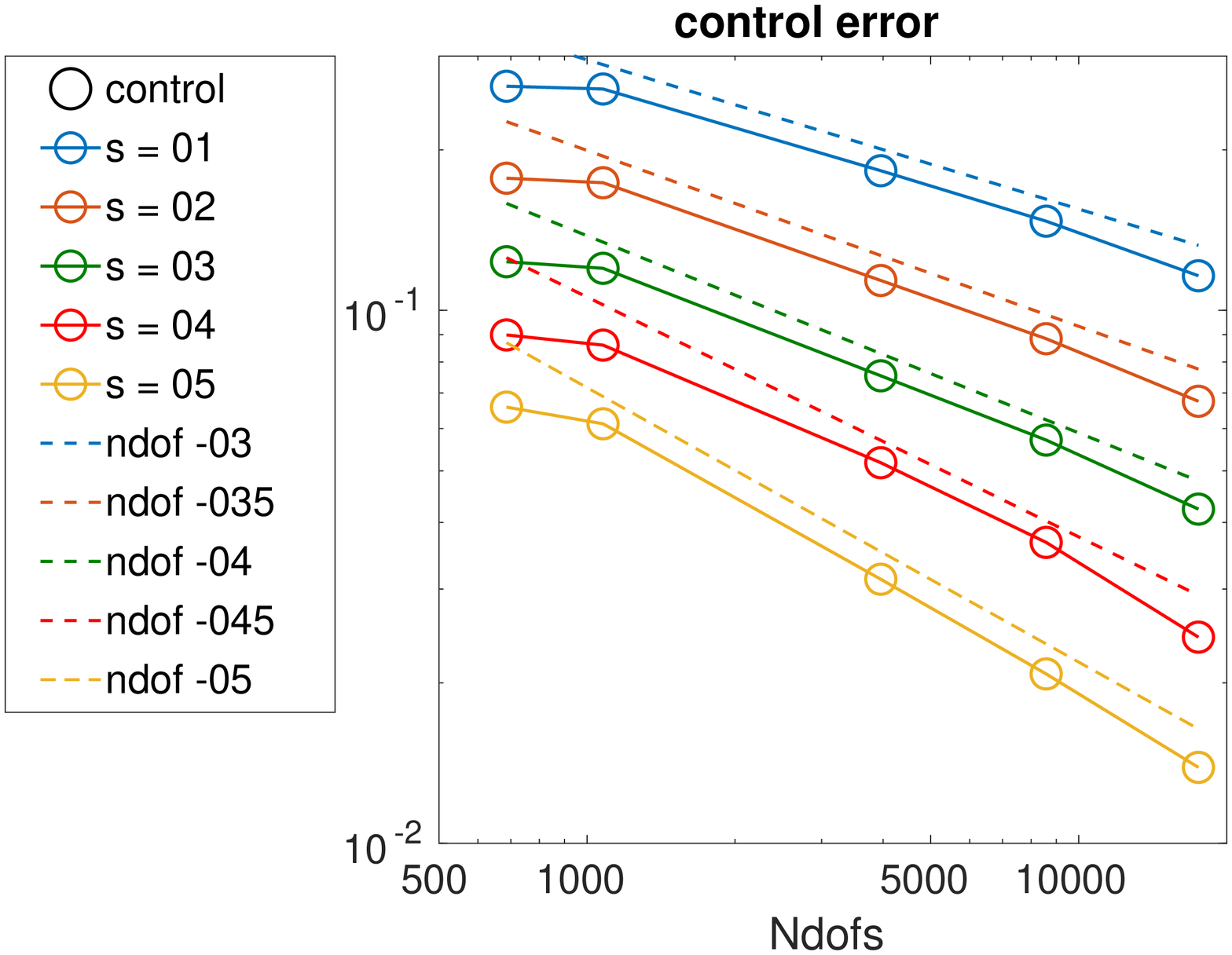}
\end{minipage}
\begin{minipage}[c]{0.45\textwidth}\centering
\psfrag{control error}{\large{$\|\bar{z} - \bar{z}_h \|_{L^{2}(\Omega)}$}}
\includegraphics[trim={0 0 0 0},clip,width=4.6cm,height=3.8cm,scale=0.40]{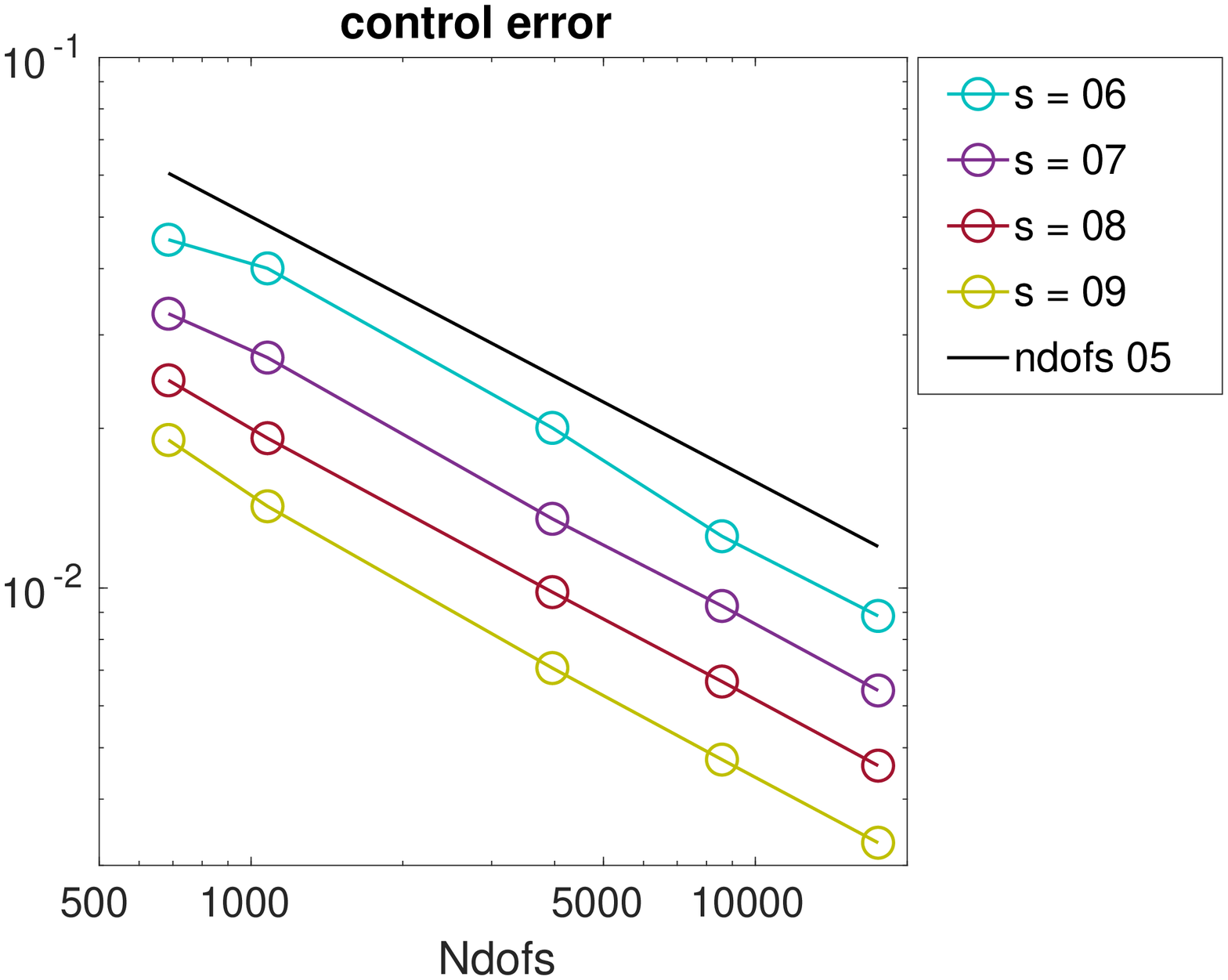}
\end{minipage}
\\[0.25cm]
\begin{minipage}[c]{0.45\textwidth}\centering
\psfrag{state error}{\hspace{0.2cm}\large{$\| \bar{u} - \bar{u}_h \|_s$}}
\includegraphics[trim={0 0 0 0},clip,width=4.5cm,height=4.0cm,scale=0.40]{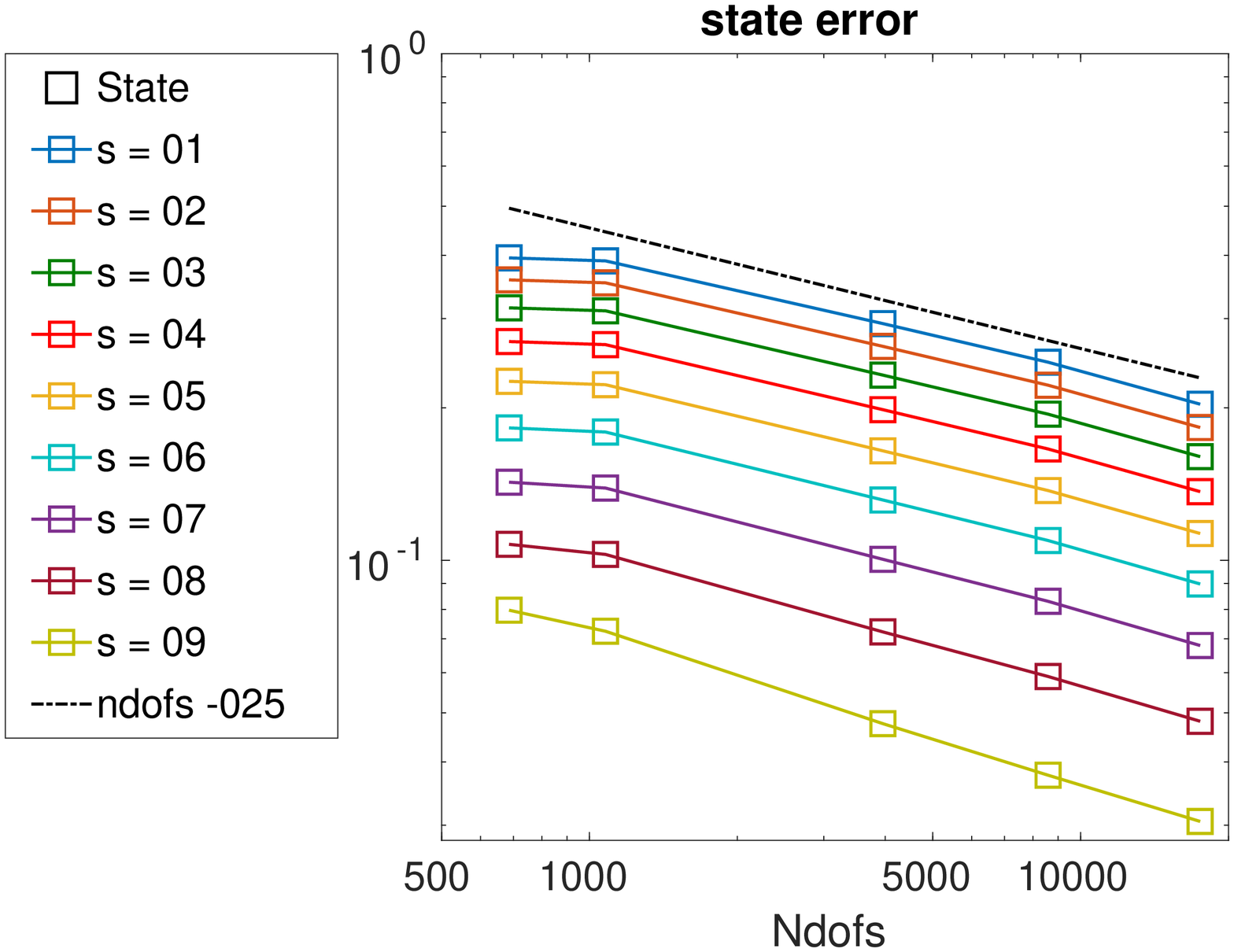}
\end{minipage}
\begin{minipage}[c]{0.45\textwidth}\centering
\psfrag{adjoint error}{\hspace{0.2cm}\large{$\| \bar{p} - \bar{p}_h \|_s$}}
\includegraphics[trim={0 0 0 0},clip,width=4.6cm,height=4.0cm,scale=0.40]{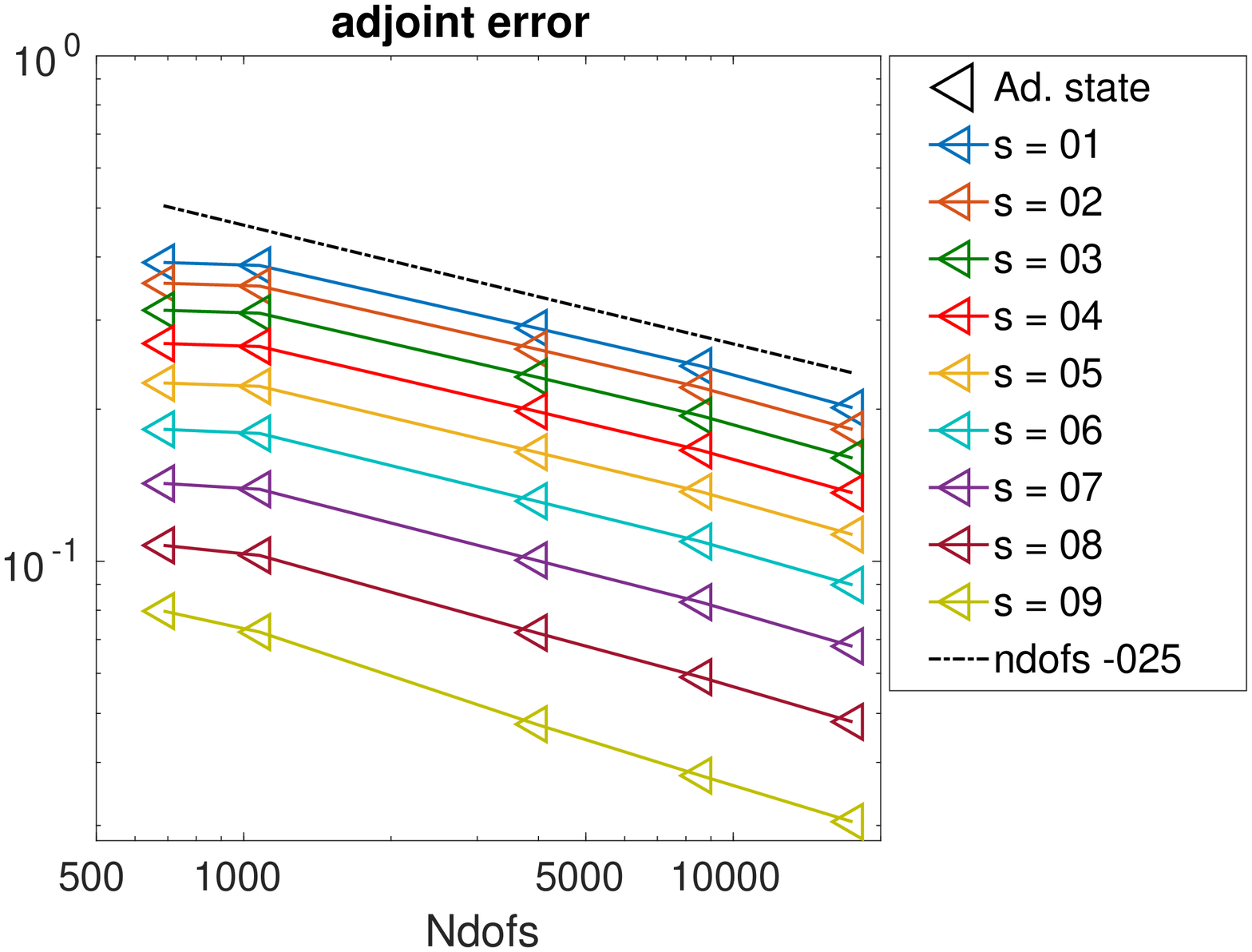}
\end{minipage}
\caption{\small{Experimental rates of convergence (ERO) for the fully discrete scheme of section \ref{sec:fem_control} within the setting described in section \ref{sec:implementation}. The obtained ERO are in agreement with the error estimates derived in Theorems \ref{thm:error_estimate_control} and \ref{thm:error_estimate_state_adjoint}; observe that $h \approx \mathsf{Ndof} ^{-1/2}$.}}
\label{fig:1}
\end{figure}

\bibliographystyle{plain}
\bibliography{bibliography}

\end{document}